\tikzstyle{vertex}=[circle, fill, draw, inner sep=0pt, minimum size=6pt]
\newcommand{\vertex}{\node[vertex]}
\newcommand\bea{\begin{eqnarray}}
\newcommand\eea{\end{eqnarray}}
\newcommand\bi{\begin{itemize}}
\newcommand\ei{\end{itemize}}
\newcommand\ben{\begin{enumerate}}
\newcommand\een{\end{enumerate}}
\newtheorem{thm}{Theorem}[section]
\newtheorem{cor}[thm]{Corollary}
\newtheorem{lem}[thm]{Lemma}
\newtheorem{prop}[thm]{Proposition}
\theoremstyle{definition}
\newtheorem{exa}[thm]{Example}
\newtheorem{defn}[thm]{Definition}
\newtheorem{rem}[thm]{Remark}
\newtheorem{que}[thm]{Question}
\newcommand{\bthm}{\begin{thm}}
\newcommand{\ethm}{\end{thm}}
\newcommand{\blem}{\begin{lem}}
\newcommand{\elem}{\end{lem}}
\newcommand{\conv}{\mbox{conv}}
\newcommand{\interior}{{\rm int}(\sigma)}
\newcommand{\R}{\ensuremath{\mathbb{R}}}
  \newcommand{\Lk}{\rm Lk}
\newcommand{\sm}{\setminus}
\numberwithin{equation}{section}
\begin{document}
\title[Neural codes, decidability, and a new local obstruction to convexity]{Neural codes, decidability, and \\ a new local obstruction to convexity}

\author{Aaron Chen}
\address{Aaron Chen, Department of
  Mathematics, Cornell University, Ithaca NY 14853}
\curraddr{University of Chicago, Chicago IL 60637}
\email{aaronchen@uchicago.edu} 
\author{Florian Frick}
\address{Florian Frick, Department of
  Mathematics, Cornell University, Ithaca NY 14853}
\curraddr{Department of Mathematical Sciences, Carnegie Mellon University, Pittsburgh PA 15213}
\email{frick@cmu.edu}

\author{Anne Shiu}
\address{Anne Shiu, Department of Mathematics, Texas A\&M University, College Station TX 77843}
\email{annejls@math.tamu.edu}

\date{\today}

\begin{abstract}
Given an intersection pattern of arbitrary sets in Euclidean space, 
is there an arrangement of convex open sets in
Euclidean space that exhibits the same intersections? 
This question is combinatorial and topological in nature, but is motivated
by neuroscience. Specifically, we are interested in a type of neuron called
a place cell, which fires precisely when an organism is in a certain region,
usually convex, called a place field. The earlier question, therefore, can
be rephrased as follows: Which neural codes, that is, 
patterns of neural activity, can arise from a collection of convex open sets? 
To address this question, Giusti and Itskov
proved that convex neural codes have no ``local obstructions,'' which are
defined via the topology of a code's simplicial complex. Codes without local
obstructions are called locally good, 
because the obstruction precludes the code from 
encoding the intersections of open sets that form a good cover.
In other words,
every good-cover code is locally good. Here we prove the converse: Every
locally good code is a good-cover code. We also prove that the good-cover
decision problem is undecidable. Finally, we reveal a stronger type of local
obstruction that prevents a code from being convex, and prove that the
corresponding decision problem is NP-hard. Our proofs use combinatorial and
topological methods.
\vskip .1in
\noindent {\bf Keywords:} convex, good cover, simplicial complex, decidability, nerve lemma, neural code, place cell

\vskip .1in
\noindent {\bf MSC codes:} 
05E45,   55U10,  92C20   \end{abstract}

\maketitle

\section{Introduction}
This work addresses the following question: Which binary codes arise from the regions cut out by a collection of convex open sets in some Euclidean space?  One such code is 
$$\mathcal{C} =
\{1110,0111,1100,0110,1010,0011,1000,0100,0010,0001 \}=
\{123,234,12,23,13,24,34,1,2,3,4\}~,$$
which arises from the following convex open sets $U_1$, $U_2$, $U_3$, and $U_4$:
\begin{center}
\begin{tikzpicture}[scale=0.5] 

\def\firstcircle{(0,0) circle (1.5cm)} 
\def\secondcircle{(45:2cm) circle (1.5cm)} 
\def\thirdcircle{(0:2cm) circle (1.5cm)} 
\def\fourthcircle{(12:4cm) circle (1.5cm)} 

    \draw \firstcircle node[below] {\small $U_1$}; 
    \draw \secondcircle node [above] {\small $U_2$}; 
    \draw \thirdcircle node [below] {\small $U_3$}; 
    \draw \fourthcircle node [above] {\small $U_4$}; 

 \begin{scope}[fill opacity=.5] 
      \clip \secondcircle;
      \clip \firstcircle; 
      \fill[gray] \thirdcircle; 
    \end{scope} 
    
    \begin{scope} 
      \clip \thirdcircle; 
      \clip \secondcircle; 
      \fill[white] \fourthcircle; 
    \end{scope} 
    
     \begin{scope}[fill opacity=.2] 
      \fill[gray] \fourthcircle; 
    \end{scope} 
    
	 \begin{scope}[fill opacity=1] 
      \clip \thirdcircle; 
      \fill[gray] \fourthcircle; 
    \end{scope}     
    
     \begin{scope} 
      \clip \secondcircle; 
      \fill[white] \fourthcircle; 
    \end{scope} 

\draw \firstcircle node[below] {\small $U_1$}; 
    \draw \secondcircle node [above] {\small $U_2$}; 
    \draw \thirdcircle node [below] {\small $U_3$}; 
    \draw \fourthcircle node [above] {\small $U_4$}; 
\draw[->](3,.5) -- (3,-2) node[anchor=west] {\footnotesize codeword 0011=34} ; 
\draw[->](1,.5) -- (-1,2) node[anchor=east] {\footnotesize codeword 1110=123} ; 
\draw[->](5,1) -- (6.5,1) node[anchor=west] {\footnotesize codeword 0001=4} ; 
\end{tikzpicture} 
\end{center}
Here, some of the regions are labeled by the corresponding {\em codewords} in $\mathcal{C}$.  We can view each codeword as a vector in $\{0,1\}^4$ or as its support set, here a subset of $\{1,2,3,4\}$.

A closely related question is: Which 
{\em intersection patterns} arise from a collection of convex sets?  This question asks only which sets $U_i$ intersect, and not whether, for instance, there is a region where $U_1$ and $U_2$ intersect outside of $U_3$.
This topic -- intersection patterns of convex sets -- has been studied extensively 
(see~\cite{tancer-survey} for an overview), but the first question we posed has caught attention only recently~\cite{just-convex,no-go,what-makes,neural_ring,GB,new-alg,sparse,LSW,giusti-preprint,IKR,williams,zvi-yan}.  

The recent interest in this area is motivated by neuroscience, 
specifically from the study of neurons called {\em place cells}. 
The discovery of place cells by O'Keefe \emph{et al.} in 1971 was a major breakthrough that led to a shared 2014 Nobel Prize in Medicine or Physiology ~\cite{Oke1}. A place cell encodes spatial information about an organism's surroundings by firing precisely when the organism is in the corresponding {\em place field}.  In this context, a codeword represents the  neural firing pattern that occurs when the organism is in the corresponding region of its environment: 
the $i$th coordinate is 1 if and only if 
the organism is in the place field of neuron $i$. 
The resulting set of codewords is called a {\em neural code}.

Place fields can be modeled by convex open sets~\cite{neural_ring}, so 
we are interested in the following restatement of 
the question that opened this work: Which neural codes can arise from a collection of convex open sets?
To address this problem, Giusti and Itskov identified a {\em local obstruction}, defined via the topology of a code's simplicial complex, and proved that convex neural codes have no local obstructions~\cite{no-go}.  Codes without local obstructions are called {\em locally good}, as the obstruction prevents the code 
from encoding the intersections of open sets that form a good cover
(for instance, if the sets are convex).  
If such a good cover exists (for instance, from a collection of convex open sets), then the code is a {\em good-cover} code.  Thus, we have:
\begin{center}
$\mathcal{C}$ is convex 
$\quad \Rightarrow \quad$
$\mathcal{C}$ is a good-cover code 
$\quad \Rightarrow \quad$
$\mathcal{C}$ is locally good.
\end{center}
The converse of the first implication is false~\cite{LSW}.
The second implication is the starting point of our work.  
We prove that the implication is in fact an equivalence:
every locally good code is a good-cover code (Theorem~\ref{thm:iff}).
We also prove that the good-cover decision problem is undecidable (Theorem~\ref{thm:undecidable}).  

Next, we discover a new, stronger type of local obstruction that precludes a code from being convex (Theorem~\ref{thm:new-obstruction}).
Like the prior obstruction, the new obstruction is defined in terms of a code's simplicial complex, 
but in this case the link of ``missing'' codewords must be ``collapsible'' 
(which is implied by ``contractible'', the condition in the original type of obstruction).  We call codes without 
the new obstruction {\em locally great}, and examine the corresponding decision problem.  We prove that
the locally-great decidability problem is decidable, and in fact NP-hard (Theorem~\ref{thm:decidable}).

Thus, our results refine the implications we saw earlier, as follows:
\begin{center}
$\mathcal{C}$ is convex 
$\quad \Rightarrow \quad$
$\mathcal{C}$ is locally great 
$\quad  \Rightarrow \quad$
$\mathcal{C}$ is a good-cover code 
$\quad \Leftrightarrow \quad$
$\mathcal{C}$ is locally good. \\
{(NP-hard problem)} 
\quad 
\quad 
\quad \quad \quad \quad 
(undecidable)
\quad \quad \quad \quad \quad \quad
\end{center}
Finally, we add another implication to the end of those listed above, by noting that every locally good code can be realized by connected open sets, but not vice-versa (Proposition~\ref{prop:connected}).
Taken together, our results resolve fundamental questions in the theory of convex neural codes.

The outline of our work is as follows.
Section~\ref{sec:bkgrd} provides background on neural codes, local obstructions, and criteria for convexity. 
In Sections~\ref{sec:iff}--\ref{sec:new-obs}, we prove the results listed above, using classical tools from topology and combinatorics.
Finally, our discussion in Section~\ref{sec:discussion} lists open questions arising from our work.

\section{Background} \label{sec:bkgrd}
Here we introduce notation as well as basic definitions in the theory of neural codes. 

We define $[n] := \{1,2,\dots,n\}$. We will reserve lowercase Greek letters (e.g., $\sigma$ and $\tau$)
to denote subsets of $[n]$ (for some $n$). 
Such a subset usually refers to a codeword in a neural code
(Definition~\ref{def:code}) 
or a face in a simplicial complex (Definition~\ref{def:simplicial-cpx}). 
For shorthand, we will omit the braces and commas; e.g., if $\tau = \{1,2,3\}$ and $\sigma = \{2,3,4\}$, we write $\tau = 123$, $\sigma = 234,$ and $\tau \cap \sigma = 23$. 

\subsection{Codes, simplicial complexes, and the nerve theorem}
Given a collection $\mathcal{U} = \{U_1,U_2,\dots,U_n\}$ of 
sets (place fields) in some {\em stimulus space} $X \subseteq \R^d$ 
 and some $\tau \subseteq [n]$, let $U_\tau := \bigcap_{i \in \tau} U_i$, where $U_{\emptyset}:=X$. 

\begin{defn} \label{def:code} ~
\begin{enumerate}
\item A \textbf{neural code} $\mathcal{C}$ on $n$ neurons is a subset of $2^{[n]}$, and each $\sigma \in \mathcal{C}$ is a \textbf{codeword}. Any codeword that is maximal in $\mathcal{C}$ with respect to set inclusion is a $\textbf{maximal codeword}$.
\item A code $\mathcal{C}$ is \textbf{realized} by a collection of sets $\mathcal{U} = \{U_1,U_2,\dots,U_n\}$ in a stimulus space $X \subseteq \mathbb{R}^d$ if 
\begin{equation} \label{realize}
\sigma \in \mathcal{C} ~ \Longleftrightarrow ~ U_\sigma \setminus \bigcup_{j \notin \sigma} U_j \neq \emptyset.
\end{equation}
Conversely, given a collection of sets $\mathcal{U}$, let  $\mathcal{C}(\mathcal{U})$ denote the unique code realized by $\mathcal{U}$, via~\eqref{realize}.
\end{enumerate}
\end{defn}

\begin{rem}
	A neural code is often referred to as a 
    {\em hypergraph} in the literature. Also, given a collection of subsets $U_1,U_2, \dots, U_n \subseteq X$, the neural code that is realized by these subsets can be defined as the collection of sets $U(x) = \{i \in [n] \ | \ x \in U_i\}$, where $x$ varies over~$X$. 
\end{rem}

Every neural code can be realized by open sets~\cite{neural_ring} 
and by convex sets~\cite{just-convex}. 
We are interested, however, in realizing neural codes by 
sets that are both open and convex.
This is because 
(biological) place fields
are approximately convex and have positive measure, 
which are properties captured by convex open sets.
\begin{defn}
A neural code $\mathcal{C}$ is: \begin{enumerate}
\item \textbf{convex} if $\mathcal{C}$ can be realized by a collection of convex open sets. \item a \textbf{good-cover code} if $\mathcal{C}$ can be realized by 
a collection of 
contractible open sets $\mathcal{U}$ such that every nonempty intersection of sets in $\mathcal{U}$ is also contractible. Such a $\mathcal{U}$ is called a \textbf{good cover}.
\item \textbf{connected} if $\mathcal{C}$ can be realized by a collection of connected open sets.
\end{enumerate}
\end{defn}

\begin{exa} We revisit the code $\mathcal{C} = \{123,234,12,23,13,24,34,1,2,3,4,\emptyset\}$ from the Introduction, where we saw that $\mathcal{C}$ is convex.
Hence, $\mathcal C$ is a good-cover code and also a connected code.  
\end{exa}

Connected codes were classified recently by Mulas and Tran (not every code is connected)~\cite{connected-codes}.  Good-cover codes are connected, but not vice-versa (see Proposition~\ref{prop:connected} later in this section).

\begin{rem}[Codes and the empty set] \label{rmk:empty-set}
A code $\mathcal{C}$ is convex (respectively, a good-cover code) if and only if $\mathcal{C} \cup \{ \emptyset \}$
is convex (respectively, a good-cover code) \cite{what-makes}.
Indeed, if $\emptyset \notin \mathcal{C}$ and $\mathcal{C}$ is realized by convex open sets (respectively, a good cover) $\mathcal{U}$,
then $\mathcal{C} \cup \emptyset$ is realized by $\{U \cap B \mid U \in \mathcal{U} \}$,
where ${B}$ is an open ball that contains a point from each region cut out by $\mathcal{U}$.
Conversely, if $\emptyset \notin \mathcal{C}$ and 
$\mathcal{C} \cup \emptyset$ is realized by $ \mathcal{U} $ (with respect to some stimulus space),
then the code $\mathcal{C}$ is realized by $\mathcal{U}$
with respect to the stimulus space  $X=\cup_{U \in \mathcal{U}} U $.
\end{rem}

Convex codes on up to four neurons have been classified~\cite{what-makes}.  This classification was enabled by analyzing codes according to the simplicial complex they generate (see Definition~\ref{def:s-cpx-code}).

\begin{defn} \label{def:simplicial-cpx}
An \textbf{abstract simplicial complex} $\Delta$ on $[n]$ is a subset of $2^{[n]}$ that is closed under taking subsets. Each $\sigma \in \Delta$ is a \textbf{face} of $\Delta$.  The \textbf{facets} of $\Delta$ are the faces that are maximal with respect to inclusion.  The \textbf{dimension} of a face $\sigma$ is $|\sigma|-1$, and the \textbf{dimension} of a simplicial complex $\Delta$, denoted by $\dim \Delta$, is the 
maximum dimension of the faces of $\Delta$.
\end{defn}
Every simplicial complex $\Delta$ can be realized geometrically in a Euclidean space of sufficiently high dimension, and we let $|\Delta|$ denote such a geometric realization (which is unique up to homeomorphism). Note that the dimension of a simplicial complex matches the dimension of its realization: $\dim \Delta = \dim |\Delta|$.

\begin{defn}
For a face $\sigma$ of a simplicial complex $\Delta$,
the \textbf{restriction} of $\Delta$ to $\sigma$ is the simplicial complex
	\[
    \Delta|_{\sigma} ~:=~ \{ \tau \in \Delta \mid \tau \subseteq \sigma \}~.
    \]
The \textbf{link} of $\sigma$ in $\Delta$ is the simplicial complex:
$$
\Lk_\sigma(\Delta) ~:=~ \{\tau \in \Delta \mid \sigma \cap \tau = \emptyset \mbox{ and } \sigma \cup \tau \in \Delta\}~.
$$
\end{defn}
\noindent
Links are usually written as ${\rm Lk}_{\Delta}(\sigma)$, instead of $\Lk_\sigma(\Delta)$, but, following \cite{what-makes}, we prefer to have $\sigma$ in the subscript, because we often consider the link in several simplicial complexes.
Note that $\sigma \cup \tau \in \Delta$ is the same as saying that $\sigma * \tau \subseteq |\Delta|$, where $*$ is the topological join.

\begin{defn}\label{def:cone}
Let $\Delta$ be a simplicial complex on $[n]$.  The {\bf cone over $\Delta$} on a new vertex $v$ is the following simplicial complex on $[n] \dot \cup \{v\}$:
\[
{\rm cone}_v(\Delta) ~:=~
	\{\sigma \cup \{v\} \mid \sigma \in \Delta \} \cup \Delta~.
\]
\end{defn}
\noindent
By construction, 
${\rm Lk}_v(
{\rm cone}_v(\Delta) ) = \Delta$.
We will use this fact throughout our work.

\begin{defn} \label{def:s-cpx-code}
For a code $\mathcal{C}$ on $n$ neurons, 
the {\bf simplicial complex of} $\mathcal{C}$, denoted by $\Delta(\mathcal{C})$,
 is the smallest simplicial complex on $[n]$ that contains $\mathcal{C}$.  
\end{defn}
\noindent
Note that for $\mathcal{C}=\mathcal{C}(\mathcal{U})$, we have $\sigma \in \Delta(\mathcal{C})$ if and only if $U_{\sigma} \neq \emptyset$ (cf.\ \eqref{realize}).
Also, the facets of $\Delta(\mathcal{C})$ are the maximal codewords of $\mathcal{C}$.

\begin{exa}
The code $\mathcal{C} =\{12,23,1,2,3,\emptyset\}$ is convex and realized here:

\begin{center}
\begin{tikzpicture}[scale=0.35] 
\def\firstcircle{(0,0) circle (1.5cm)} 
\def\secondcircle{(45:2cm) circle (1.5cm)} 
\def\thirdcircle{(0:3.5cm) circle (1.5cm)} 
    \draw \firstcircle node[below] {\small $U_1$}; 
    \draw \secondcircle node [above] {\small $U_2$}; 
    \draw \thirdcircle node [below] {\small $U_3$}; 
\end{tikzpicture} 
\end{center}
The simplicial complex of the code, $\Delta(\mathcal{C})$, is realized here:
\begin{center}
    \begin{tikzpicture}[scale=0.7]
        \vertex[label=$1$](p1) at (-2,0) {};
        \vertex[label=$2$](p2) at (0,1) {};
        \vertex[label=$3$](p3) at (2,0) {};
  \path [-] (p1) edge node[above] {$12$} (p2);
   \path [-](p2) edge node[above] {$23$} (p3);
    \end{tikzpicture}
\end{center}
\end{exa}

A related notion to $\Delta(\mathcal{C})$ is the nerve of a cover (see Remark~\ref{rmk:nerve}), which we define now. 
\begin{defn} Given a collection of sets $\mathcal{U} = \{ U_1,U_2,\dots,U_n\}$, the \textbf{nerve} of $\mathcal{U}$, denoted by $\mathcal{N}(\mathcal{U})$, is the simplicial complex on $[n]$ defined by:
$$
\sigma \in \mathcal{N}(\mathcal{U}) ~ \Longleftrightarrow ~ U_\sigma \neq \emptyset~.
$$
\end{defn}
\begin{rem} \label{rmk:nerve} $\mathcal{N}(\mathcal{U}) = \Delta(\mathcal{C}(\mathcal{U}))$.
\end{rem}

Next, we recall the classical result called the nerve theorem or nerve lemma~\cite{weil}.  The version we state is~\cite[Corollary~4G.3]{hatcher}:
\begin{prop}[Nerve theorem] \label{prop:nerve-thm}
If  $\mathcal{U}$ is a finite collection of nonempty and contractible
open sets 
that cover a paracompact space $S$
such that every intersection of sets is either empty or contractible, then 
$S$ is homotopy equivalent to $\mathcal{N}(\mathcal{U})$.  
\end{prop}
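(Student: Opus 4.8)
The plan is to prove the nerve theorem by interpolating $S$ and $|\mathcal{N}(\mathcal{U})|$ through a common ``thickened nerve'' and showing the two natural projections out of it are homotopy equivalences. For each nonempty face $\sigma \in \mathcal{N}(\mathcal{U})$ let $\Delta^\sigma \subseteq |\mathcal{N}(\mathcal{U})|$ denote the closed simplex on the vertex set $\sigma$, and set
\[
\widetilde{S} ~:=~ \Bigl( \coprod_{\emptyset \ne \sigma \in \mathcal{N}(\mathcal{U})} U_\sigma \times \Delta^\sigma \Bigr) \big/ {\sim}~,
\]
where, for $\tau \subseteq \sigma$, a point $(x,t)$ with $x \in U_\sigma$ and $t \in \Delta^\tau$ is glued to its image in $U_\tau \times \Delta^\tau$ under the inclusion $U_\sigma \hookrightarrow U_\tau$. (This $\widetilde{S}$ is the homotopy colimit of the diagram of intersections $U_\sigma$ and the inclusions between them.) There are two continuous projections, $p \colon \widetilde{S} \to S$, $(x,t) \mapsto x$, and $q \colon \widetilde{S} \to |\mathcal{N}(\mathcal{U})|$, $(x,t) \mapsto t$. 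Once each is shown to be a homotopy equivalence, we obtain $S \simeq \widetilde{S} \simeq |\mathcal{N}(\mathcal{U})|$, which is the claim.

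For $q$: the open stars $\mathrm{St}(v_i)$ of the vertices form a numerable open cover of the CW complex $|\mathcal{N}(\mathcal{U})|$, and the intersection of the open stars of the vertices of a face $\sigma$ is the open star of $\sigma$. Pushing the simplex coordinate $t$ straight toward $v_i$ while holding $x$ fixed deformation-retracts $q^{-1}(\mathrm{St}(v_i))$ onto $q^{-1}(v_i) = U_i \times \{v_i\}$, which is contractible because $U_i$ is; likewise $q^{-1}$ of the open star of $\sigma$ retracts onto $U_\sigma \times \{\text{barycenter of } \Delta^\sigma\}$, contractible by the intersection hypothesis. Thus $q$ restricts to a homotopy equivalence over every finite intersection of sets in the star cover, and a standard gluing lemma for maps that are ``locally'' homotopy equivalences over a numerable cover upgrades this to the statement that $q$ is a homotopy equivalence.

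For $p$: this is the step I expect to be the main obstacle, because a map need not be a homotopy equivalence merely because its fibers are contractible — the fiber over $x$ here is $\{x\} \times \Delta^{\sigma(x)}$ with $\sigma(x) := \{i : x \in U_i\}$, contractible, yet we must actually produce a homotopy inverse. This is exactly where the hypotheses enter: since $S$ is paracompact and $\mathcal{U}$ is an open cover, there is a partition of unity $\{\varphi_i\}_{i \in [n]}$ subordinate to $\mathcal{U}$. Define $s \colon S \to \widetilde{S}$ by $s(x) = \bigl(x, \sum_i \varphi_i(x)\, e_i\bigr)$; because $\{i : \varphi_i(x) > 0\} \subseteq \sigma(x)$, the point $\sum_i \varphi_i(x)\, e_i$ lies in $\Delta^{\sigma(x)}$, so $s$ is well defined, and it is continuous by the usual partition-of-unity argument. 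Then $p \circ s = \mathrm{id}_S$, while the straight-line homotopy $H_u(x,t) = \bigl(x, (1-u)\,t + u \sum_i \varphi_i(x)\, e_i\bigr)$ runs from $s \circ p$ to $\mathrm{id}_{\widetilde{S}}$ — it is well defined because every intermediate simplex coordinate is supported on a subset of $\sigma(x)$, so the pair stays in $\widetilde{S}$ and the formula respects the gluings. Verifying continuity and well-definedness of $s$ and $H$ across the identifications is the only genuinely fiddly point, and it is routine given the partition of unity.

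An alternative I would keep in reserve is an induction on $n$ using the gluing lemma for homotopy pushouts. Put $A = U_1 \cup \dots \cup U_{n-1}$ and $B = U_n$: then $\{U_1, \dots, U_{n-1}\}$ is an open cover of $A$, the collection $\{U_1 \cap U_n, \dots, U_{n-1} \cap U_n\}$ is an open cover of $A \cap B$, and $\{B\}$ covers $B$, all three inheriting the hypotheses; and on the nerve side $|\mathcal{N}(\mathcal{U})|$ is the union of $|\mathcal{N}(\{U_1, \dots, U_{n-1}\})|$ with the closed star of the vertex $n$, glued along $\mathrm{Lk}_{\{n\}}(\mathcal{N}(\mathcal{U}))$ — which is precisely the nerve $\mathcal{N}(\{U_1 \cap U_n, \dots, U_{n-1} \cap U_n\})$. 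Since $\{A, B\}$ is an open cover the square on the space side is a homotopy pushout; the subcomplex inclusion makes the square on the nerve side a homotopy pushout as well; the closed star of $n$ is a cone, hence contractible like $B$; and the inductive hypothesis identifies the other two corners. The gluing lemma then gives $S \simeq |\mathcal{N}(\mathcal{U})|$. The obstacle here is mostly bookkeeping — checking the excision and cofibration hypotheses of the gluing lemma — together with the caveat that open subspaces of paracompact spaces need not be paracompact; this is harmless in the setting of this paper, since $S \subseteq \R^d$ is metrizable and hence hereditarily paracompact.
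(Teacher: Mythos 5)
The paper does not prove this proposition; it states it as a classical result and cites Hatcher~[Corollary 4G.3], so there is no in-paper proof to compare against. Your argument is the standard one and is, in spirit, the same as Hatcher's: build an intermediate space that maps to both $S$ and $|\mathcal{N}(\mathcal{U})|$ and show each projection is a homotopy equivalence. (Hatcher's model $\Delta X_{\mathcal U}$ is indexed by chains of faces rather than single faces, but that is a cosmetic difference; your ``thin'' blowup works equally well here because you verify both projections directly rather than invoking abstract homotopy-colimit invariance.) The two technical ingredients you defer are exactly the right ones and both are standard: for $q$, the theorem that a map which restricts to a homotopy equivalence over every nonempty finite intersection of a numerable open cover is itself a homotopy equivalence (tom Dieck, May); for $p$, the partition-of-unity section, whose continuity into the quotient $\widetilde S$ does need the small local-factoring argument you allude to (near $x_0$, shrink to a neighborhood avoiding $\mathrm{supp}(\varphi_i)$ for the finitely many $i$ whose support misses $x_0$, so that $s$ factors through a single piece $U_\sigma \times \Delta^\sigma$). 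Your reserve inductive argument is also sound in the metrizable setting relevant to this paper, and you correctly identify its genuine subtlety: the gluing lemma for homotopy pushouts needs the corner equivalences to be compatible with the square maps, which the bare inductive hypothesis does not supply; the usual repair is to induct on a \emph{canonical} classifying map built from a partition of unity rather than on the existence of unspecified equivalences.
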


Metric spaces are paracompact~\cite{stone}, so good-cover realizations of codes satisfy the hypotheses of Proposition~\ref{prop:nerve-thm}.  
Thus, if we determine that a code does not satisfy the conclusion of this proposition,
then we conclude it is not convex.  We turn to this topic now.

\subsection{Local obstructions and criteria for convexity} \label{sec:local-obs}
One way to detect non-convexity of a neural code is to find what is known as a local obstruction.
\begin{defn} \label{def:local-obs}
Given a code $\mathcal{C}$ that is realized by open sets $\mathcal{U}$, a \textbf{local obstruction} is a pair $(\sigma,\tau)$
for which 
$$
U_\sigma ~ \subseteq ~ \bigcup_{i \in \tau} U_i~,
$$
where $\tau \neq \emptyset$ and $\Lk_\sigma(\Delta(\mathcal{C})|_{\sigma \cup \tau})$ is {\em not} contractible.
A code $\mathcal{C}$ with {\em no} local obstructions is \textbf{locally good}. 
\end{defn}
\noindent
Definition~\ref{def:local-obs} does not depend on the choice of open sets $\mathcal{U}$~\cite{what-makes}.

The name ``local obstruction''
is due to the following result, which states that if a code has a local obstruction, then it is not a good-cover code, and therefore not convex.
\begin{prop} [Giusti and Itskov~\cite{no-go}; Curto {\em et al.}~\cite{what-makes}] \label{prop:locally-good}
\begin{equation} \label{relation}
\mathcal{C} \mbox{ is convex } \Rightarrow \mathcal{C} \mbox{ is a good-cover code } \Rightarrow \mathcal{C} \mbox{ is locally good}.
\end{equation}
\end{prop}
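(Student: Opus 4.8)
The plan is to prove the two implications in~\eqref{relation} separately. The first, that convexity implies the good-cover property, is immediate: if a collection $\mathcal{U}$ of convex open sets realizes $\mathcal{C}$, then every $U_i$ is contractible (convex sets are star-shaped) and every nonempty intersection $U_\tau = \bigcap_{i\in\tau} U_i$ is again convex, hence contractible, so $\mathcal{U}$ is a good cover. All the work is in the second implication, which I would establish contrapositively: a good-cover code has no local obstruction.

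I would then fix a good cover $\mathcal{U} = \{U_1,\dots,U_n\}$ realizing $\mathcal{C}$ and a pair $(\sigma,\tau)$ with $\tau\neq\emptyset$, $\sigma\cap\tau=\emptyset$, and $U_\sigma\subseteq\bigcup_{i\in\tau}U_i$; since the notion of local obstruction is independent of the realizing open sets, it suffices to show that this pair is not a local obstruction with respect to $\mathcal{U}$, i.e.\ that $\Lk_\sigma(\Delta(\mathcal{C})|_{\sigma\cup\tau})$ is contractible. After disposing of the degenerate cases---$U_\sigma=\emptyset$, which by convention is excluded since then $\sigma\notin\Delta(\mathcal{C})$, and the case where $U_\sigma$ meets no $U_i$ with $i\in\tau$, where the link is the single point $\{\emptyset\}$---we may assume $U_\sigma\neq\emptyset$ and that $\tau' := \{i\in\tau : U_{\sigma\cup\{i\}}\neq\emptyset\}$ is nonempty. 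The key move is to restrict the good cover to the contractible set $S := U_\sigma$: set $\mathcal{V} := \{U_\sigma\cap U_i : i\in\tau'\}$. The hypothesis $U_\sigma\subseteq\bigcup_{i\in\tau}U_i$ makes $\mathcal{V}$ an open cover of $S$; each member $U_\sigma\cap U_i = U_{\sigma\cup\{i\}}$ and, more generally, each nonempty intersection $\bigcap_{i\in\rho}(U_\sigma\cap U_i) = U_{\sigma\cup\rho}$ of members of $\mathcal{V}$ is a nonempty intersection of sets from the good cover $\mathcal{U}$, hence contractible. Thus $\mathcal{V}$ is itself a good cover of $S$, and $S$---a subspace of $\R^d$, hence metric and hence paracompact by~\cite{stone}---satisfies the hypotheses of the nerve theorem (Proposition~\ref{prop:nerve-thm}), which gives $S\simeq\mathcal{N}(\mathcal{V})$. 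As $S = U_\sigma$ is contractible, so is $\mathcal{N}(\mathcal{V})$.

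Finally, I would check that $\mathcal{N}(\mathcal{V}) = \Lk_\sigma(\Delta(\mathcal{C})|_{\sigma\cup\tau})$: both are complexes on the vertex set $\tau'$, and for $\rho\subseteq\tau'$,
\[
\rho\in\mathcal{N}(\mathcal{V}) \iff U_{\sigma\cup\rho}\neq\emptyset \iff \sigma\cup\rho\in\Delta(\mathcal{C}) \iff \rho\in\Lk_\sigma(\Delta(\mathcal{C})|_{\sigma\cup\tau}),
\]
the last equivalence using $\rho\cap\sigma=\emptyset$ and $\sigma\cup\rho\subseteq\sigma\cup\tau$. Hence $\Lk_\sigma(\Delta(\mathcal{C})|_{\sigma\cup\tau})$ is contractible, so $(\sigma,\tau)$ is not a local obstruction; since the pair was arbitrary, $\mathcal{C}$ is locally good. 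I expect the main obstacle to be organizational rather than conceptual: checking the nerve-theorem hypotheses carefully (nonemptiness of every set occurring in $\mathcal{V}$ and of the intersections used, paracompactness of $S$) and handling the degenerate cases so that the statement is not vacuously undermined by the conventions on $(\sigma,\tau)$ and on codes containing $\emptyset$. The one genuinely substantive ingredient---that the contractibility of $U_\sigma$ forces the contractibility of the link of $\sigma$---is precisely the nerve theorem applied to the induced cover $\mathcal{V}$.
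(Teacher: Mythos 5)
Your proof is correct. The paper itself does not prove Proposition~\ref{prop:locally-good}; it cites~\cite{no-go,what-makes}, and the argument you give---restrict the good cover to $S=U_\sigma$, observe that $\mathcal{V}=\{U_\sigma\cap U_i\}_{i\in\tau'}$ is itself a good cover of $S$ whose nerve is $\Lk_\sigma(\Delta(\mathcal{C})|_{\sigma\cup\tau})$, then invoke the nerve theorem and the contractibility of $U_\sigma$---is exactly the standard proof from those references. Your handling of the conventions is sensible (the definition of local obstruction in Definition~\ref{def:local-obs} implicitly assumes $\sigma\in\Delta(\mathcal{C})$ and, after trimming, $\tau\cap\sigma=\emptyset$), and you are right that the case $\tau'=\emptyset$ is in fact vacuous once $U_\sigma\neq\emptyset$ and $U_\sigma\subseteq\bigcup_{i\in\tau}U_i$ are assumed. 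It is worth noting that your argument is also precisely the template the authors upgrade in Section~\ref{sec:new-obs}: replacing the nerve theorem's conclusion (contractibility of $\mathcal{N}(\mathcal{V})$) with Wegner's sweeping lemma (Lemma~\ref{lem:sweep}) yields collapsibility of the same nerve when the $U_i$ are convex, which is how Lemma~\ref{lem:key} and Theorem~\ref{thm:new-obstruction} strengthen the obstruction from ``locally good'' to ``locally great.''
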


A natural question is:  
Do the converses of the implications in~\eqref{relation} hold? 
For the second implication, the converse is true; we will prove this in the next section  (Theorem~\ref{thm:iff}).  

As for the first implication, the converse is in general false (but true for codes in which all codewords have size at most two~\cite{sparse} and codes on up to four neurons~\cite{what-makes}).
The first counterexample, which is a code on five neurons, was found by Lienkaemper, Shiu, and Woodstock~\cite{LSW}:
\begin{prop} [Counterexample code~\cite{LSW}] \label{prop:counterexample-code}
The following neural code is a good-cover code, but non-convex: $\mathcal{C} = \{2345,123,134,145,13,14,23,34,45,3,4,\emptyset\}$.
\end{prop}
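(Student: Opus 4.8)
The statement bundles two claims, which I would prove separately.

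\emph{That $\mathcal{C}$ is a good-cover code.} The plan is to write down an explicit good cover. First one computes $\Delta(\mathcal{C})$: its facets are $2345,123,134,145$, so the cover must exhibit exactly these maximal codewords. I would realize it in $\mathbb{R}^2$: take $U_3$ and $U_4$ to be two large overlapping contractible regions; take $U_1$ to be a contractible region with $U_1 \subseteq U_3 \cup U_4$, meeting each of $U_3,U_4$, with $U_1 \cap U_3 \cap U_4 \neq \emptyset$ (which yields the codewords $134,13,14$); and take $U_2$ and $U_5$ to be thin ``worms'': $U_2$ sits entirely inside $U_3$, dips into $U_1$ to create the region $123 = U_1 \cap U_2$, then snakes back out of $U_1$, never re-entering $U_1$ and never leaving $U_3$, until it overlaps $U_4\cap U_5$ and creates the region $2345$; symmetrically $U_5 \subseteq U_4$ runs from the region $145 = U_1\cap U_5$ out to the region $2345$. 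The remaining work is bookkeeping: check that every nonempty intersection of members of $\{U_1,\dots,U_5\}$ is a contractible blob (they are all disk-like by construction) and that the realized code is exactly $\mathcal{C}$ — in particular that $U_2$ not re-entering $U_1$ kills spurious codewords such as $124$, and that the two worms meet only in the region $2345$. (Alternatively one could verify that $\mathcal{C}$ is locally good and quote Theorem~\ref{thm:iff}; and $\emptyset \in \mathcal{C}$ is immaterial by the earlier remark on the empty set.)

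\emph{That $\mathcal{C}$ is not convex.} Suppose, for contradiction, that convex open sets $U_1,\dots,U_5$ realize $\mathcal{C}$. First I would extract the containments the code forces. Since the only codeword containing $\{1,2\}$ is $123$, we get $U_1\cap U_2 \subseteq U_3$ and $U_1 \cap U_2 \cap U_4 = \emptyset$; since every codeword containing $2$ also contains $3$, in fact $U_2 \subseteq U_3$, and symmetrically $U_5 \subseteq U_4$; since $1 \notin \mathcal{C}$ it follows that $U_1 \subseteq U_3 \cup U_4$; and since $2345$ is the only codeword containing $\{2,4\}$ — equivalently, the only one containing $\{3,5\}$ — the region $W := U_2 \cap U_4 = U_3 \cap U_5$ is a nonempty convex open set disjoint from $U_1$ (as $12345$ is not a face of $\Delta(\mathcal{C})$). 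Next I would fix representatives: $a$ in the region $123$ (so $a \in U_1\cap U_2$, $a \notin U_4\cup U_5$), $b$ in the region $134$, $c$ in the region $145$, and $p \in W$. Convexity then gives $[a,p]\subseteq U_2$, $[b,p]\subseteq U_3\cap U_4$, and $[c,p]\subseteq U_5$, while $a,b,c \in U_1$ and $p \notin U_1$; since $U_1$ and $W$ are disjoint open convex sets, a hyperplane $H$ strictly separates them. Each of the three segments runs from $U_1$ to $p\in W$, so it crosses $H$, and the forced relations identify the region of each crossing point: the crossing point on $[a,p]$ lies in the region $23$ (it is in $U_2\cap U_3$ but in neither $U_1$ nor $W$), the one on $[b,p]$ in the region $34$, and the one on $[c,p]$ in the region $45$. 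From here the plan is to derive a contradiction from this point configuration — $a,b,c,p$ and the three crossing points, which by construction are forced to lie on specific subsets of the $U_i$ and off the empty intersections of $\mathcal{C}$ — for instance through a Radon-type argument in which every candidate incidence is ruled out because it would place a point in an empty region such as $U_1\cap U_2\cap U_4$, $U_1\cap U_3\cap U_4\cap U_5$, or $W\cap U_1$.

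The main obstacle is precisely this last step. By design $\mathcal{C}$ has no local obstruction, so no single computation with $\Delta(\mathcal{C})$ can finish the job; the argument must genuinely invoke convex geometry — separating hyperplanes, and a Radon- or Helly-type incidence argument — together with a careful case analysis keyed to the regions of $\mathcal{C}$ that are empty. Everything upstream of it (the forced containments, the choice of representatives, the separating hyperplane and the location of the crossing points) is routine.
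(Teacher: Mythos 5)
The paper does not prove Proposition~\ref{prop:counterexample-code}; it is quoted from~\cite{LSW}, so there is no internal argument to compare against. For the good-cover half, your plan is sound: the two-worm picture in $\mathbb{R}^2$ is essentially the realization given in~\cite{LSW}, and the alternative you mention --- verify that the missing faces of $\Delta(\mathcal{C})$ have contractible links and invoke Theorem~\ref{thm:iff} --- is the cleaner route inside this paper, with $\emptyset\in\mathcal{C}$ handled by Remark~\ref{rmk:empty-set}.

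The non-convexity half has a genuine gap, which you candidly flag yourself. Everything you actually write down is preparatory: the forced containments $U_2\subseteq U_3$, $U_5\subseteq U_4$, $U_1\subseteq U_3\cup U_4$, $W:=U_2\cap U_4=U_3\cap U_5$, and $W\cap U_1=\emptyset$ are all correct, and the separating-hyperplane step together with the placement of the three crossing points in regions $23$, $34$, $45$ also checks out (since $U_1$ and $W$ are open convex and disjoint, a separating hyperplane meets neither, which is what pins the crossing points to those regions). But none of this yet uses convexity in an essential way, and by design it cannot finish the job at this level of generality: $\mathcal{C}$ is locally good, so the obstruction you need is an honest convex-geometry fact, not something readable off the simplicial complex. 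The sentence ``derive a contradiction from this point configuration \dots through a Radon-type argument'' is precisely where the content of the theorem lives, and it is not supplied; ruling out ``every candidate incidence'' is not a proof sketch but a restatement of the goal. As written, the proposal is a correct setup for the hard direction, not a proof of it, and should not be represented as completing the argument.
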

\noindent
This code, it turns out, {is} realizable by {\em closed} convex sets instead of open convex sets~\cite{giusti-preprint}. 

Returning to the topic of detecting local obstructions,
the next result gives a way to do so that is more efficient than simply applying the definition (Definition~\ref{def:local-obs}).
As it turns out, we need to check only the links of faces that are intersections of facets of $\Delta(\mathcal{C})$.
\begin{defn} 
The set of \textbf{mandatory codewords of 
a simplicial complex} $\Delta$ is
$\mathcal{M}(\Delta) := \{\sigma \in \Delta \mid {\rm Lk}_\sigma(\Delta)$ is not contractible$\}$; and the 
set of \textbf{non-mandatory codewords of} $\Delta$ is $\Delta \setminus \mathcal{M}(\Delta)$.
The set of \textbf{mandatory codewords of a code} $\mathcal{C}$ 
is $\mathcal{M}(\Delta(\mathcal{C}))$.
\end{defn}
\begin{prop} [Curto {\em et al.}~\cite{what-makes}] \label{prop:mandatory}
A code $\mathcal{C}$ is locally good if and only if it contains all its mandatory codewords (i.e., $\mathcal{M}(\Delta(\mathcal{C})) \subseteq \mathcal{C}$). Also, every mandatory codeword is an intersection of maximal codewords.
\end{prop}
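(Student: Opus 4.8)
The proposition contains two assertions: that a code is locally good exactly when it contains all of its mandatory codewords, and that every mandatory codeword is an intersection of maximal codewords. The second is quick, so I would dispatch it first. Write $\Delta=\Delta(\mathcal C)$ and let $\sigma\in\mathcal M(\Delta)$. Let $\omega$ be the intersection of all facets of $\Delta$ that contain $\sigma$; this family is nonempty, so $\omega$ is a well-defined face of $\Delta$, and a facet contains $\sigma$ iff it contains $\omega$. I claim $\sigma=\omega$, which is the assertion since the facets of $\Delta(\mathcal C)$ are exactly the maximal codewords (noted after Definition~\ref{def:s-cpx-code}). If instead $\sigma\subsetneq\omega$, pick $v\in\omega\setminus\sigma$; for any $\rho\in\Lk_\sigma(\Delta)$ the face $\sigma\cup\rho$ sits in some facet $F$, and $\sigma\subseteq F$ forces $v\in\omega\subseteq F$, whence $\sigma\cup\rho\cup\{v\}\subseteq F$ is a face and $\rho\cup\{v\}\in\Lk_\sigma(\Delta)$. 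Thus $v$ is a cone vertex of $\Lk_\sigma(\Delta)$, so $\Lk_\sigma(\Delta)={\rm cone}_v(\Lk_v(\Lk_\sigma(\Delta)))$ is contractible, contradicting $\sigma\in\mathcal M(\Delta)$.

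For the biconditional, fix a realization $\mathcal U$ of $\mathcal C$, write $\Delta=\Delta(\mathcal C)$, and recall that a local obstruction $(\sigma,\tau)$ has $\sigma\cap\tau=\emptyset$ and that $U_\sigma\subseteq\bigcup_{i\in\tau}U_i$ holds iff every codeword of $\mathcal C$ containing $\sigma$ meets $\tau$ (a point whose codeword is $c$ lies in $U_\sigma$ iff $\sigma\subseteq c$ and in $\bigcup_{i\in\tau}U_i$ iff $c\cap\tau\neq\emptyset$; this is also why Definition~\ref{def:local-obs} does not depend on $\mathcal U$). The forward implication is then immediate: if $\sigma\in\mathcal M(\Delta)\setminus\mathcal C$ then $\sigma\neq[n]$ (otherwise $[n]\in\mathcal C$), so for $\tau:=[n]\setminus\sigma$ we get $U_\sigma\subseteq\bigcup_{j\notin\sigma}U_j$ (because $\sigma\notin\mathcal C$) together with $\Lk_\sigma(\Delta|_{\sigma\cup\tau})=\Lk_\sigma(\Delta)$ not contractible, so $(\sigma,\tau)$ is a local obstruction and $\mathcal C$ is not locally good.

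The converse is the substantive direction. Given a local obstruction $(\sigma,\tau)$, set $K:=\Lk_\sigma(\Delta)$, let $V$ be its vertex set, $T:=\tau\cap V$, and $R:=V\setminus T$; a direct check (using $\sigma\cap\tau=\emptyset$) gives $K|_T=\Lk_\sigma(\Delta|_{\sigma\cup\tau})$, so $K|_T=K\setminus R$ is not contractible. The plan is to invoke, contrapositively, the following topological lemma: \emph{if $K$ is a simplicial complex and $R\subseteq V(K)$ is such that $\Lk_\rho(K)$ is contractible for every face $\rho\subseteq R$ of $K$, then $|K|$ deformation retracts onto $|K\setminus R|$}; in particular $K\setminus R$ is then contractible (take $\rho=\emptyset$). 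Since $K\setminus R$ is not contractible, the lemma produces a face $\rho\subseteq R$ of $K$ with $\Lk_\rho(K)$ not contractible. Put $\omega:=\sigma\cup\rho$: it is a face of $\Delta$, and $\Lk_\omega(\Delta)=\Lk_\rho(\Lk_\sigma(\Delta))=\Lk_\rho(K)$ is not contractible, so $\omega\in\mathcal M(\Delta)$; moreover $\omega\supseteq\sigma$ while $\omega\cap\tau=\emptyset$ (both $\sigma$ and $\rho\subseteq R$ are disjoint from $\tau$), so the combinatorial description of the containment gives $\omega\notin\mathcal C$. Hence $\mathcal M(\Delta)\not\subseteq\mathcal C$, which is the contrapositive of the converse.

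It remains to prove the topological lemma, and this is where I expect the real work — and the main obstacle — to lie. I would induct on $|R|$, the case $R=\emptyset$ being trivial, using as the basic step: if $\Lk_v(K)$ is contractible, then $|K\setminus v|$ is a deformation retract of $|K|$. Indeed $|K|=|K\setminus v|\cup\bigl(v*|\Lk_v(K)|\bigr)$ with intersection $|\Lk_v(K)|$; the inclusion $|\Lk_v(K)|\hookrightarrow v*|\Lk_v(K)|$ is a cofibration between contractible spaces, hence a homotopy equivalence, and a cofibration that is a homotopy equivalence is preserved by pushout and admits a deformation retraction, yielding $|K|\to|K\setminus v|$. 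For the inductive step one deletes a vertex $v\in R$ and must check that $(K\setminus v,\,R\setminus\{v\})$ still satisfies the hypothesis, i.e.\ that $\Lk_{\rho'}(K\setminus v)=\Lk_{\rho'}(K)\setminus v$ is contractible for every face $\rho'\subseteq R\setminus\{v\}$ of $K\setminus v$; this follows by applying the basic step inside the contractible complex $\Lk_{\rho'}(K)$, using that $\Lk_v(\Lk_{\rho'}(K))=\Lk_{\rho'\cup\{v\}}(K)$ is contractible by hypothesis whenever $\rho'\cup\{v\}$ is a face (and the deletion is vacuous otherwise). Composing the resulting deformation retractions proves the lemma. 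Keeping the lemma's hypothesis invariant under vertex deletion — so that the induction closes — is the delicate point; the remaining topology is standard.
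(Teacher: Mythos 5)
The paper does not prove Proposition~\ref{prop:mandatory}; it states it with attribution to Curto \emph{et al.}~\cite{what-makes} and relies on it as an imported result. So there is no in-paper proof to compare against, and the question is simply whether your self-contained argument is sound. I believe it is, and it follows the same basic strategy one finds in~\cite{what-makes}: translate the geometric containment $U_\sigma\subseteq\bigcup_{i\in\tau}U_i$ into the purely combinatorial statement ``every codeword containing $\sigma$ meets $\tau$,'' and then argue about links in $\Delta(\mathcal C)$.

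Your proof that every mandatory codeword equals the intersection $\omega$ of the facets containing it is clean: if $\sigma\subsetneq\omega$, picking $v\in\omega\setminus\sigma$ makes $v$ a cone vertex of $\Lk_\sigma(\Delta)$ (since every facet containing $\sigma$ contains $v$), forcing contractibility, which is a contradiction. The forward direction of the biconditional (mandatory codeword missing $\Rightarrow$ local obstruction) is the easy one and you handle it correctly, including the edge case $\sigma\neq[n]$. For the substantive converse, the identification $K|_T=\Lk_\sigma(\Delta|_{\sigma\cup\tau})=K\setminus R$ with $K=\Lk_\sigma(\Delta)$, $T=\tau\cap V(K)$, $R=V(K)\setminus T$ checks out (using $\sigma\cap\tau=\emptyset$, so that $\eta\subseteq\sigma\cup\tau$ and $\eta\cap\sigma=\emptyset$ together give $\eta\subseteq\tau$). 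Your topological lemma — contractibility of $\Lk_\rho(K)$ for all faces $\rho\subseteq R$ implies $|K|\searrow|K\setminus R|$ — is correct as stated, and the inductive step does close: $\Lk_{\rho'}(K\setminus v)=\Lk_{\rho'}(K)\setminus v$ and one applies the one-vertex step inside the contractible complex $\Lk_{\rho'}(K)$, whose link at $v$ is $\Lk_{\rho'\cup\{v\}}(K)$ (contractible by hypothesis when $\rho'\cup\{v\}$ is a face, vacuous otherwise). The one-vertex step via ``closed cofibration $+$ homotopy equivalence is preserved by pushout and yields a deformation retraction'' is the standard gluing argument. Taking $\omega=\sigma\cup\rho$ for a $\rho\subseteq R$ with $\Lk_\rho(K)$ non-contractible then gives $\Lk_\omega(\Delta)$ non-contractible and $\omega\cap\tau=\emptyset$, whence $\omega\in\mathcal M(\Delta)\setminus\mathcal C$, as required.

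Two small remarks. First, you invoke $\sigma\cap\tau=\emptyset$ as part of the definition of a local obstruction; Definition~\ref{def:local-obs} as printed in this paper does not state that condition, though it is implicit (and is part of the definition in~\cite{what-makes}) — without it, degenerate pairs with $\tau\subseteq\sigma$ would cause trouble — so you should flag that you are reading it into the definition. Second, you do not need the full strength of a deformation retraction in the lemma; a homotopy equivalence suffices to derive the contradiction, which would let you cite the gluing lemma more lightly. Neither point affects correctness.
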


As a corollary, {\em max-intersection complete} codes, 
those that are closed under taking intersections of maximal codewords, are locally good. 
In fact, these codes are convex~\cite{giusti-preprint}.
Note that the (non-convex) counterexample code $\mathcal{C}$ from Proposition~\ref{prop:counterexample-code} is {\em not} max-intersection complete, because the intersection of maximal codewords $123 \cap 134\cap 145=1$ is missing from $\mathcal{C}$.

Another result pertaining to convexity,
due to Cruz {\em et al.}~\cite{giusti-preprint}, is as follows: for codes with the same simplicial complex, convexity is a monotone property with respect to inclusion. 
That is, {\em if $\mathcal{C}$ is convex, and $\mathcal{C} \subseteq \mathcal{C}'$ where $\Delta(\mathcal{C}) = \Delta(\mathcal{C}')$, then $\mathcal{C}'$ is convex.}

We end this section by showing that good-cover codes are realizable by connected open sets, but not vice-versa:
\begin{prop} \label{prop:connected}
Every good-cover code is connected, but not every connected code is a good-cover code.
\end{prop}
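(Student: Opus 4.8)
The statement has two parts: (1) every good-cover code is connected, and (2) some connected code is not a good-cover code. For part (1), the plan is to start with a good cover $\mathcal{U} = \{U_1, \dots, U_n\}$ realizing $\mathcal{C}$; since each $U_i$ is contractible, it is in particular connected, so $\mathcal{U}$ is already a collection of connected open sets realizing $\mathcal{C}$. Hence $\mathcal{C}$ is a connected code, and there is essentially nothing to prove beyond unwinding the definitions.

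For part (2), I would exhibit an explicit small code that is connected but has a local obstruction (by Proposition~\ref{prop:locally-good}, a code with a local obstruction is not a good-cover code). A natural candidate is a code whose simplicial complex $\Delta$ has a non-contractible link at some face $\sigma$, where $\sigma$ is a forced (mandatory) codeword that we deliberately omit from $\mathcal{C}$, yet the resulting $\mathcal{C}$ can still be realized by connected open sets. The cleanest example is built from the ``hollow triangle'': let $\Delta$ be generated by the facets $12, 23, 13$ (so $|\Delta| \simeq S^1$), and take $\mathcal{C} = \{12, 23, 13, 1, 2, 3, \emptyset\}$, omitting the codeword $\emptyset$'s role aside, the key point is that $\Lk_{\emptyset}(\Delta) = \Delta \simeq S^1$ is not contractible — but $\emptyset \in \mathcal{C}$, so that is not an obstruction. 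Instead I would use a code where a nonempty mandatory codeword is missing: for instance, take three ``long'' sets arranged so their pairwise intersections are all nonempty but the triple intersection is empty, and omit a codeword forced by the link condition. A known such code (see the sparse/up-to-four-neuron literature) is $\mathcal{C} = \{123, 12, 13, 3, \emptyset\}$ — here $\Delta(\mathcal{C})$ is the full $2$-simplex on $\{1,2,3\}$, but one checks $\Lk_{2}(\Delta) = \{1,3,13,\emptyset\}$, an edge, which is contractible, so that particular code is not an obstruction either; I would instead need a genuinely non-contractible link. The reliable choice is to take $\Delta$ with $|\Delta| \simeq S^1$ (the hollow triangle with facets $12,23,13$) and set $\mathcal{C} = \{12, 23, 13, 1, 2\}$, omitting the codeword $3$: then $\Lk_3(\Delta) = \{1, 2, 12, \emptyset\}$, still contractible. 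The point is subtle, so I would actually go up one dimension and use the boundary of the octahedron or a small triangulation of $S^1$ realized concretely, picking $\sigma$ so that $\Lk_\sigma(\Delta)$ is a circle and verifying $\sigma \notin \mathcal{C}$ while $\mathcal{C}$ is still connected.

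The main obstacle is the second part: I need a code that simultaneously (a) fails to be a good-cover code — most easily certified by producing an honest local obstruction $(\sigma, \tau)$ via Proposition~\ref{prop:mandatory}, i.e., a mandatory codeword $\sigma \in \mathcal{M}(\Delta(\mathcal{C}))$ with $\sigma \notin \mathcal{C}$ — and (b) is realizable by connected (not necessarily contractible) open sets. For (b) I would give an explicit picture: connected open sets in $\R^2$, possibly with ``holes'' or annular/bent shapes, whose intersection pattern produces exactly the desired codeword set; the freedom to use non-contractible connected sets is precisely what lets us realize a code that a good cover cannot. I would then double-check, using Proposition~\ref{prop:mandatory}, that the omitted codeword really is an intersection of maximal codewords with non-contractible link, so that the local obstruction is valid and part (2) follows from Proposition~\ref{prop:locally-good}.

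As a sanity check on the logical structure: part (1) gives the forward implication "good-cover $\Rightarrow$ connected", and part (2)'s code witnesses that the implication is strict, completing the chain of implications advertised in the introduction (convex $\Rightarrow$ locally great $\Rightarrow$ good-cover code $\Leftrightarrow$ locally good $\Rightarrow$ connected, with the last arrow strict).
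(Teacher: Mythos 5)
Your part (1) matches the paper: contractible implies connected, so a good-cover realization is already a connected realization.

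Part (2) has a genuine gap: you never actually produce a working counterexample, and the computations you do carry out contain an error. For the hollow triangle $\Delta$ with facets $12,13,23$, you write $\Lk_3(\Delta)=\{1,2,12,\emptyset\}$ and call it a contractible edge; but $12\notin\Lk_3(\Delta)$, since $123\notin\Delta$. The link is actually the two-point complex $\{\emptyset,1,2\}$, which is \emph{not} contractible. However, even with the link computed correctly, the code $\{12,23,13,1,2\}$ still does not serve as your counterexample, because it is not connected: $3$ being mandatory and missing forces $U_3\subseteq U_1\cup U_2$ with $U_1\cap U_2\cap U_3=\emptyset$, so $U_3$ is the disjoint union of two nonempty open pieces $U_3\cap U_1$ and $U_3\cap U_2$, hence disconnected. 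A link homotopy equivalent to $S^0$ is therefore the wrong target; you need the missing codeword's link to be $S^1$ so that the ``surrounding'' regions can form an annulus, keeping the realization connected.

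The paper's example is $\mathcal{C}=\{124,134,234,14,24,34,\emptyset\}$: the codeword $4$ is missing, its link in $\Delta(\mathcal{C})$ is the boundary of the triangle $123$ (which is $S^1$, non-contractible), so $\mathcal{C}$ is not locally good by Proposition~\ref{prop:mandatory}, and the paper exhibits an explicit realization by connected open sets in $\mathbb{R}^2$ (with $U_4$ an annular region). This is exactly the ``cone minus apex over a hollow triangle'' construction you gestured at, but you needed to commit to it, verify the link is the hollow triangle rather than a pair of points, and actually display the connected realization rather than leave it as a promise.
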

\begin{proof}
Contractible sets are connected, so, by definition, 
good-cover codes are connected.   
As for the converse, consider the following code: 
 $\mathcal{C} = \{124, 134, 234, 14, 24, 34, \emptyset \}$.  The codeword $4$,
 which is not in $\mathcal C$, 
 is a mandatory codeword of $\mathcal C$, 
 as  
 ${\rm Lk}_{\{4\}}(\Delta (\mathcal{C}))$ is 
 the following non-contractible simplicial complex:

 \begin{center}
  \begin{tikzpicture}[scale=0.6]
	\draw (0,0)--(1,0)--(0.5,0.886)--(0,0);
       \node[label=above:$1$] at (0.5,0.8) {};
       \node[label=left:$2$] at (0,0) {};
       \node[label=right:$3$] at (1,0) {};
	\end{tikzpicture}
\end{center}
Thus, by definition, $\mathcal{C}$ is not locally good, and so, by Proposition~\ref{prop:mandatory}, is not a good-cover code.  We complete the proof by displaying the following realization of $\mathcal{C}$ by connected, open sets in $\mathbb{R}^2$:
 
 \begin{center}
 \begin{tikzpicture}[scale=0.9]
	\draw (1,0)--(0.5,0.866)--(-0.5,0.866)--(-1,0)--(-0.5,-0.866)--(0.5,-0.866)--(1,0);
	\draw (2,0)--(1,1.73)--(-1,1.73)--(-2,0)--(-1,-1.73)--(1,-1.73)--(2,0);
   	\draw (1,0)--(2,0);
	\draw (0.5,0.866)--(1,1.73);
	\draw (-0.5,0.866)--(-1,1.73);
	\draw (-1,0)--(-2,0);
	\draw (-0.5,-0.866)--(-1,-1.73);
	\draw (0.5,-0.866)--(1,-1.73);
    \node[label=above:$124$] at (0,0.866) {};
    \node[label=below:$34$] at (0,-0.866) {};
    \node[label=above right:$14$] at (0.6,0.3) {};
    \node[label=above left:$24$] at (-0.6,0.3) {};
    \node[label=below left:$234$] at (-0.5,-0.3) {};
    \node[label=below right:$134$] at (0.5,-0.3) {};
\end{tikzpicture}
 \end{center}
More precisely, for $i=1,2,3,4$, consider (the closures of) the regions
above that are labeled by some $\sigma$ for which $i \in \sigma$.
Now let $U_i$ be the interior of the union of all such regions.
\end{proof}

\section{Locally good codes are good-cover codes} \label{sec:iff}
In this section we will prove that being locally good is equivalent to being a good-cover code (Theorem~\ref{thm:iff}). 
We accomplish this by constructing a good-cover realization of any locally good code.  Our construction has two steps.  We first realize {\em any} code via (not necessarily open) subsets of a 
geometric realization of its simplicial complex (Proposition~\ref{prop:realization}), and then do a ``reverse deformation retract''
to obtain a realization by open sets
(Proposition~\ref{prop:realization-U}).

\subsection{Code-complex realizations}
The idea behind the following construction, which realizes any code $\mathcal{C}$ by subsets of $|\Delta(\mathcal{C})|$,
is to realize codewords in $\mathcal{C}$ by the corresponding faces of $|\Delta(\mathcal{C})|$.
Thus, we will simply delete faces corresponding to codewords that are {\em not} in $\mathcal{C}$.  
Accordingly, 
for any simplicial complex~$\Delta$ and any $\sigma \in \Delta$,
let $\interior$ denote the relative interior of the realization of the face $\sigma$ within $|\Delta|$ 
(if $v$ is a vertex, then ${\rm int}(v)=\{v\}$).  See the left-hand side of Figure~\ref{fig:R}.
It follows that $|\Delta| = \dot{\cup}_{\emptyset \neq \sigma \in \Delta} \interior$.  Thus, $|\Delta|$ is a CW-complex built from the $\interior$'s.

\begin{defn} \label{def:code-cpx-realization}
Let $\mathcal{C}$ be a code on $n$ neurons.
For each $i \in [n]$, consider the following subset of $|\Delta(\mathcal{C})|$: 
	\begin{align*}
		V_i ~:=~ \bigcup_{ i \in \sigma \in \mathcal{C}} \interior ~.
	\end{align*}
Then $\{V_1, V_2, \dots, V_n \}$ is the {\bf code-complex realization} of $\mathcal{C}$.
\end{defn}

\begin{figure}[ht]
\begin{tikzpicture}[scale=1]
    \tikzstyle{point}=[circle]
    \node(a)[point] at (0,0) {};
    \node[label=left:$(3)$] [point] at (0.1,0) {};
    \node (b)[point] at (2,0) {};
    \node[label=right:$(2)$] [point] at (1.8,0) {};
    \node (c)[point] at (1,1.7) {};
    \node[label=above:$(1)$] [point] at (1,1.5) {};
	\fill[gray!30] (0,0)--(2,0)--(1,1.7);
    \draw   (a.center) -- (b.center)node[pos=.5,below]{$(23)$}node[pos=.5,above,yshift = .2cm]{$(123)$} -- (c.center)node[pos=.5,right]{$(12)$} -- (a.center)node[pos=.5,left]{$(13)$} ;
    
    \node (d)[point] at (4,0) {};
    \node (e)[point] at (6,0) {};
    \node (f)[point] at (5,1.7) {};
   
    \node (g)[point] at (3,0) {};
    \node (h)[point] at (3.5,-0.85) {};
   
    \node (i)[point] at (7,0) {};
    \node (j)[point] at (6.5,-.85) {};

    \node (k)[point] at (4.5,2.55) {};
    \node (l)[point] at (5.5,2.55) {};

    \draw   (d.center) -- (e.center)node[pos=.5,below]{$R_{23}$}node[pos=.5,above,yshift = .2cm]{$R_{123}$} -- (f.center)node[pos=.5,right]{$R_{12}$} -- (d.center) node[pos=.5,left]{$R_{13}$};
     
     \draw[dashed]   (g.center) -- (d.center)node[pos=.5,below] {$R_3$} -- (h.center);

     \draw[dashed]   (i.center) -- (e.center)node[pos=.5,below] {$R_2$} -- (j.center);

     \draw[dashed]   (k.center) -- (f.center) -- (l.center)node[pos=0,above,yshift=.3cm]{$R_1$};
\end{tikzpicture}
\caption{The correspondence between the face-interiors
$\interior$ that make up the $2$-simplex and the regions $R_{\sigma}$ 
that deformation retract to the $\interior$'s (see equation~\eqref{eq:R}). 
We define the region $R_{12}$ to contain the face-interior labeled by $(12)$,
and $R_1$ to contain  
the dashed lines of its boundary 
and the vertex $(1)$
(so it is closed), which is why 
in Proposition~\ref{prop:realization-U} 
we must pass from the unions of regions $R_{\sigma}$ to 
their relative interiors.
\label{fig:R}}
\end{figure}
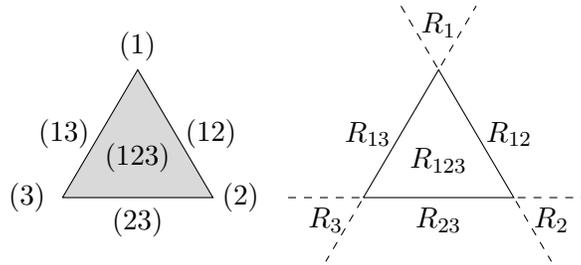

\begin{prop} \label{prop:realization}
Let $\mathcal{C}$ be a code on $n$ neurons.
Then the code-complex realization $\{V_1, V_2, \dots, V_n \}$
of $\mathcal{C}$ given in 
Definition~\ref{def:code-cpx-realization} 
 is a realization of $\mathcal{C}$.
Moreover, if $\mathcal{C}$ is locally good, then every nonempty intersection of sets from $\{V_1, V_2, \dots, V_n \}$ is contractible.
\end{prop}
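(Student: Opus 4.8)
The plan is to establish the two claims separately. For the first claim---that $\{V_1,\dots,V_n\}$ realizes $\mathcal{C}$---I would unwind the definitions. Fix $\sigma \subseteq [n]$ nonempty and ask when $V_\sigma \setminus \bigcup_{j\notin\sigma} V_j \neq \emptyset$. The key observation is that the relative interiors $\interior[\rho]$ partition $|\Delta(\mathcal{C})|$, so a point in $\interior[\rho]$ (for $\rho \in \mathcal{C}$) lies in $V_i$ precisely when $i \in \rho$. Hence $\interior[\rho] \subseteq V_\sigma$ iff $\sigma \subseteq \rho$, and $\interior[\rho]$ is disjoint from every $V_j$ with $j \notin \sigma$ iff $\rho \subseteq \sigma$. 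Combining, $\interior[\rho] \subseteq V_\sigma \setminus \bigcup_{j\notin\sigma} V_j$ exactly when $\rho = \sigma$, and no other face-interior meets this difference set. So $V_\sigma \setminus \bigcup_{j\notin\sigma} V_j \neq \emptyset$ iff $\sigma \in \mathcal{C}$, which is \eqref{realize}. (The empty codeword is handled by Remark~\ref{rmk:empty-set} or by noting the stimulus space is $|\Delta(\mathcal C)|$.)

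For the second claim, I would first identify $V_\sigma = \bigcap_{i\in\sigma} V_i$ explicitly. By the same partition argument, $V_\sigma = \bigcup \{\interior[\rho] \mid \rho \in \mathcal{C},\ \sigma \subseteq \rho\}$. Now I want to recognize this union of face-interiors as (the relative interior structure of) a recognizable subcomplex-like object. The natural candidate: $V_\sigma$ should be closely tied to the \emph{open star} of $\sigma$ restricted to the faces of $\mathcal C$, and after taking closures, to a complex built from $\Lk_\sigma(\Delta(\mathcal{C}))$ together with $\sigma$. Concretely, the set of faces $\rho \in \Delta(\mathcal C)$ with $\sigma \subseteq \rho$ is in bijection with $\Lk_\sigma(\Delta(\mathcal C))$ via $\rho \mapsto \rho \setminus \sigma$, and the geometric picture is that $\overline{V_\sigma}$ deformation retracts onto a copy of $|\Lk_\sigma(\Delta(\mathcal C))|$ (pushing toward the barycenter of $\sigma$, or onto the ``far face''). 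The subtlety is that $V_\sigma$ collects only those $\rho$ that are actually in $\mathcal{C}$, not all of $\Delta(\mathcal C)$; but since $\mathcal C$ is locally good, Proposition~\ref{prop:mandatory} tells us $\mathcal{M}(\Delta(\mathcal{C})) \subseteq \mathcal{C}$, so every face whose link is non-contractible is present.

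The crux is then a contractibility statement: I claim $V_\sigma$ is contractible whenever it is nonempty (i.e., whenever $\sigma$ is a face of $\Delta(\mathcal C)$). I would prove this by showing $V_\sigma$ is homotopy equivalent to $\Lk_\sigma(\Delta(\mathcal C))$ and then invoking: if $\sigma \in \mathcal C$ is a face, $\Lk_\sigma(\Delta(\mathcal C))$ need not be contractible in general, but---wait, this is exactly where local goodness must enter. If $\Lk_\sigma$ were not contractible then $\sigma$ is mandatory, hence $\sigma \in \mathcal C$; that alone does not make $V_\sigma$ contractible. So the homotopy type of $V_\sigma$ cannot simply be $\Lk_\sigma(\Delta(\mathcal C))$. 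Instead I expect $V_\sigma$ to be homotopy equivalent to $\St_\sigma$-type region that \emph{deformation retracts to the vertex/barycenter of $\sigma$ itself}, because $\sigma \in \mathcal C$ means $\interior[\sigma]$ itself is included in $V_\sigma$, and every $\rho \supseteq \sigma$ in $\mathcal C$ contributes a face-interior that is ``attached along $\sigma$'s direction.'' The honest statement I would aim for: the union $\bigcup_{\rho \in \mathcal C,\ \rho \supseteq \sigma} \interior[\rho]$ is star-shaped with respect to the barycenter of $\sigma$ inside $|\Delta(\mathcal C)|$, because for any point $x$ in such an $\interior[\rho]$, the straight segment from $x$ to $\hat\sigma$ (barycenter of $\sigma$) stays inside $\interior[\rho] \cup \interior[\sigma'] $ for faces $\sigma' $ with $\sigma \subseteq \sigma' \subseteq \rho$, all of which lie in $\mathcal C$ since $\mathcal C \ni \rho$ and we only need $\sigma' \in \mathcal C$... and here is the real gap: $\sigma'$ with $\sigma \subseteq \sigma' \subseteq \rho$ need not be in $\mathcal C$.

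This is precisely where I expect the main obstacle, and where local goodness does real work. The resolution I would pursue: do the straight-line contraction not inside $V_\sigma$ but build a deformation retraction in stages, or---more robustly---show $V_\sigma$ is contractible by exhibiting it as homotopy equivalent to the full open star $\openStar[\sigma]$ in $\Delta(\mathcal C)$, using that the faces ``missing'' from $\mathcal C$ between $\sigma$ and some $\rho$ have contractible links (by Proposition~\ref{prop:mandatory}, non-presence forces contractible link) and can therefore be filled in or retracted away without changing homotopy type via a nerve-style or discrete-Morse argument. I would structure the final write-up as: (i) formula for $V_\sigma$; (ii) a lemma that for a locally good code, $V_\sigma \simeq \openStar[\sigma]$ in $|\Delta(\mathcal C)|$ (the missing faces are ``inessential''); (iii) $\openStar[\sigma]$ is contractible (it is star-shaped toward $\hat\sigma$). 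Step (ii) is the technical heart; I would prove it by inducting on the number of faces of $\Delta(\mathcal C)$ not in $\mathcal C$ that contain $\sigma$, removing a maximal such face $\mu$ at a time and checking that deleting $\interior[\mu]$ from the current region is a deformation retract---which holds because the ``link data'' of $\mu$ relative to the region being modified is governed by $\Lk_\mu(\Delta(\mathcal C)|_{\text{relevant}})$, which is contractible since $\mu \notin \mathcal C$ is non-mandatory, so $\interior[\mu]$ sits in a collar that can be pushed off. This collar/retraction bookkeeping is the part I would need to carry out carefully rather than wave at.
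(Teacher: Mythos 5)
Your first part (realization) is correct and matches the paper's argument exactly. The second part correctly locates where local goodness must enter, but both your diagnosis of the difficulty and your proposed repair deviate from what actually works, and the crucial technical step is left unproved.

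First, a small misdiagnosis: the straight-line contraction toward the barycenter $\hat\sigma$ does \emph{not} run into trouble with intermediate faces $\sigma'$ with $\sigma\subsetneq\sigma'\subsetneq\rho$. For $x\in\mathrm{int}(\rho)$ with $\sigma\subseteq\rho$, the point $(1-t)x+t\hat\sigma$ has support exactly $\rho$ for all $t\in[0,1)$, so the path stays in $\mathrm{int}(\rho)\subseteq V_\sigma$ right up to $t=1$. The real problem is only the endpoint: if $\sigma\notin\mathcal C$, then $\hat\sigma\notin V_\sigma$ and the contraction overshoots the space. This is why the paper splits into the two cases $\tau\in\mathcal C$ (where the straight-line/orthogonal-projection retraction to $\mathrm{int}(\tau)$ just works) and $\tau\notin\mathcal C$ (where something genuinely harder is needed).

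Second, and more substantially: your step (ii) --- that for locally good $\mathcal C$ one can delete the interiors of the missing faces one at a time from the open star without changing homotopy type, ``because $\mathrm{int}(\mu)$ sits in a collar that can be pushed off'' --- is precisely the technical heart and is not proved. Deleting the relative interior of a face from a union of open cells is not a deformation retract in any straightforward collar sense, and the hypothesis doing the work (that $\mathrm{Lk}_\mu$ is contractible) does not translate into a collar via any standard move. The paper isolates this as a standalone result (Lemma~\ref{lem:code-cpx}) and proves it in Appendix~A by passing to the barycentric subdivision, covering the punctured realization by open vertex stars, and applying the nerve lemma; the verification that removing one non-mandatory face changes the nerve by gluing on a cone over a contractible link is several paragraphs of careful bookkeeping. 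Your plan is pointed in the right direction and you are honest that the bookkeeping is outstanding, but as written there is a genuine gap here. Note also that the paper first reduces $V_\tau$ (for $\tau\notin\mathcal C$) to the code-complex realization of the code-link $\mathrm{Lk}_\tau(\mathcal C)$, so that Lemma~\ref{lem:code-cpx} can be applied to an honest simplicial complex $\mathrm{Lk}_\tau(\Delta(\mathcal C))$ rather than directly to an open star, which is not a complex; this reduction is what makes the inductive removal argument clean.
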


We postpone the proof of Proposition~\ref{prop:realization} to the end of this subsection.

\begin{rem} \label{rmk:not-s-cpx}
Code-complex realizations 
(Definition~\ref{def:code-cpx-realization})
are in general not simplicial complexes or even CW-complexes.
See Examples~\ref{ex:realize-code} and~\ref{ex:realize-code-2},
where the sets $V_i$ and their unions are not even closed.
\end{rem}

\begin{rem}
Code-complex realizations 
(Definition~\ref{def:code-cpx-realization})
are somewhat similar to prior constructions that also specify regions that correspond
to codewords~\cite{what-makes,giusti-preprint} or intersection patterns~\cite{tancer-survey}.
\end{rem}

\begin{exa} \label{ex:realize-code}
Consider the (locally good) code $\mathcal{C} = \{123,12,23,1,2,\emptyset\}$. The 
code-complex realization of $\mathcal{C}$ given in 
Definition~\ref{def:code-cpx-realization} is depicted here,
along with $|\Delta(\mathcal{C})|$:

\begin{center}
\begin{tikzpicture}[scale=0.9]
\fill[gray!30] (-8,-3)--(-6,-3)--(-7,-1.5);
\draw node[fill=white, dashed, draw=black, inner sep=.1cm] (ov42) at (-8, -3) [shape=circle] {};
\draw node[fill=white, dashed, draw=black, inner sep=.1cm] (cv42) at (-6,-3) [shape=circle] {};
\draw node[fill=black, draw=black, inner sep=.1cm] (top-1) at (-7,-1.5) [shape=circle] {};
\draw[-] (top-1)--(cv42);
\draw[dashed] (cv42)--(ov42);
\draw[dashed] (top-1)--(ov42);
\node[label=right : {(1)}] at (-7,-1.5) {};
\node[label=right : {(12)}] at (-6.7,-2.25) {};
\node[label=below : {(123)}] at (-7,-2) {};
\node[label=below : {$V_1$}] (v42) at (-7,-3.4) {};
\fill[gray!30] (-4.5,-3)--(-2.5,-3)--(-3.5,-1.5);
\draw node[fill=white, dashed, draw=black, inner sep=.1cm] (ov) at (-4.5, -3) [shape=circle] {};
\draw node[fill=black, draw=black, inner sep=.1cm] (cv) at (-2.5,-3) [shape=circle] {};
\draw node[fill=white, dashed, draw=black, inner sep=.1cm] (top) at (-3.5,-1.5) [shape=circle] {};
\draw[-] (top)--(cv);
\draw[-] (cv)--(ov);
\draw[dashed] (top)--(ov);
\node[label=below : {(23)}] at (-3.5,-2.8) {};
\node[label=right : {(12)}] at (-3.2,-2.25) {};
\node[label=right : {(2)}] at (-2.5,-3) {};
\node[label=below : {(123)}] at (-3.5,-2) {};
\node[label=below : {$V_2$}] (v42) at (-3.5,-3.4) {};
\fill[gray!30] (-1,-3)--(1,-3)--(0,-1.5);
\draw node[fill=white, dashed, draw=black, inner sep=.1cm] (o) at (-1, -3) [shape=circle] {};
\draw node[fill=white, dashed, draw=black, inner sep=.1cm] (c) at (1,-3) [shape=circle] {};
\draw node[fill=white, dashed, draw=black, inner sep=.1cm] (t) at (0,-1.5) [shape=circle] {};
\draw[dashed] (t)--(c);
\draw[-] (c)--(o);
\draw[dashed] (t)--(o);
\node[label=below : {(23)}] at (0,-2.8) {};
\node[label=below : {(123)}] at (0,-2) {};
\node[label=below : {$V_3$}] (v42) at (0,-3.4) {};
\fill[gray!30] (4.5,-3)--(6.5,-3)--(5.5,-1.5);
\draw node[fill=black, draw=black, inner sep=.1cm] (left) at (4.5, -3) [shape=circle] {};
\draw node[fill=black, draw=black, inner sep=.1cm] (right) at (6.5,-3) [shape=circle] {};
\draw node[fill=black, draw=black, inner sep=.1cm] (high) at (5.5,-1.5) [shape=circle] {};
\draw[-] (left)--(right);
\draw[-] (high)--(right);
\draw[-] (left)--(high);
\node[label=right : {(1)}] at (5.5,-1.5) {};
\node[label=right : {(2)}] at (6.5,-3) {};
\node[label=left : {(3)}] at (4.5,-3) {};
\node[label=right : {(12)}] at (5.8,-2.22) {};
\node[label=left : {(13)}] at (5,-2.1) {};
\node[label=below : {(23)}] at (5.5,-2.8) {};
\node[label=below : {(123)}] at (5.5,-2) {};
\node[label=below : {$|\Delta(\mathcal{C})|$}] (v42) at (5.5,-3.4) {};
\end{tikzpicture}
\end{center}
\end{exa}

\begin{exa} \label{ex:realize-code-2}
For the (not locally good) code $\mathcal{C} = \{13,23,1,\emptyset\}$,
the 
code-complex realization
is:
\begin{center}
\begin{tikzpicture}[scale=0.9]
\draw node[fill=white, dashed, draw=black, inner sep=.1cm] (ov42) at (-8, -3) [shape=circle] {};
\draw node[fill=black, draw=black, inner sep=.1cm] (top-1) at (-7,-1.5) [shape=circle] {};
\draw[-] (top-1)--(ov42);
\node[label=right : {(1)}] at (-7,-1.5) {};
\node[label=right : {(13)}] at (-8.7,-2.2) {};
\node[label=below : {$V_1$}] (v42) at (-7,-3.1) {};
\draw node[fill=white, dashed, draw=black, inner sep=.1cm] (ov) at (-4.5, -3) [shape=circle] {};
\draw node[fill=white, dashed, draw=black, inner sep=.1cm] (cv) at (-2.5,-3) [shape=circle] {};
\draw[-] (cv)--(ov);
\node[label=above : {(23)}] at (-3.5,-3.2) {};
\node[label=below : {$V_2$}] (v42) at (-3.5,-3.1) {};
\draw node[fill=white, dashed, draw=black, inner sep=.1cm] (o) at (-1, -3) [shape=circle] {};
\draw node[fill=white, dashed, draw=black, inner sep=.1cm] (c) at (1,-3) [shape=circle] {};
\draw node[fill=white, dashed, draw=black, inner sep=.1cm] (t) at (0,-1.5) [shape=circle] {};
\draw[-] (t)--(o);
\draw[-] (c)--(o);
\node[label=above : {(23)}] at (0,-3.2) {};
\node[label=left : {(13)}] at (-0.3,-2.2) {};
\node[label=below : {$V_3$}] (v42) at (0,-3.1) {};
\end{tikzpicture}
\end{center}
\end{exa}

Comparing Examples~\ref{ex:realize-code} and~\ref{ex:realize-code-2}, note that, consistent with Proposition~\ref{prop:realization}, the sets $V_i$
in the first example are contractible and have contractible intersections, but not in the second example ($V_3$ is not connected).   

The proof of Proposition~\ref{prop:realization}
relies on the following lemma, which may be of independent interest.

\begin{lem} \label{lem:code-cpx}
Let $\Sigma$ be a simplicial complex, and let
$\Omega$ be a subset of the non-mandatory codewords of $\Sigma$.
Then, if $\Sigma$ is contractible, then so is the code-complex realization of the code $\Sigma \setminus \Omega$.
\end{lem}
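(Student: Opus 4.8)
\emph{Setup and strategy.} Write $\mathcal{C} := \Sigma \setminus \Omega$. Since the link of a facet of a complex equals $\{\emptyset\}$, which is not contractible, every facet is mandatory, so $\Omega$ contains no facet, $\Delta(\mathcal{C}) = \Sigma$, and the sets $V_i$ of Definition~\ref{def:code-cpx-realization} live in $|\Sigma|$. Deleting the empty set from $\Omega$ changes nothing, so we may assume every face in $\Omega$ is nonempty; then $\bigcup_i V_i = \bigcup_{\emptyset \neq \sigma \in \mathcal{C}}{\rm int}(\sigma) = |\Sigma| \setminus \bigcup_{\sigma \in \Omega}{\rm int}(\sigma)$. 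So the statement to prove becomes: \emph{if $\Sigma$ is contractible and $\Omega$ is a set of nonempty non-mandatory faces of $\Sigma$, then $|\Sigma| \setminus \bigcup_{\sigma \in \Omega}{\rm int}(\sigma)$ is contractible.} I would prove this by a double induction: an outer induction on $\dim \Sigma$ and, for each fixed $\Sigma$, an inner induction on $|\Omega|$. The base case $\Omega = \emptyset$ is immediate, since the space is then $|\Sigma|$.

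\emph{Reducing to the link of a minimal face.} For the inductive step assume $\Omega \neq \emptyset$, pick $\sigma \in \Omega$ minimal under inclusion, put $\Omega' := \Omega \setminus \{\sigma\}$, and let $X := |\Sigma| \setminus \bigcup_{\tau \in \Omega'}{\rm int}(\tau)$, which is contractible by the inner hypothesis; the space we want is $X \setminus {\rm int}(\sigma)$, namely $X$ with one open cell deleted. Two properties of $\sigma$ are used. First, minimality means no proper face of $\sigma$ lies in $\Omega$, so not only ${\rm int}(\sigma)$ but also ${\rm int}(\phi)$ for every $\phi \subsetneq \sigma$ is present in $X$; that is, the closed simplex spanned by $\sigma$ lies in $X$, so pushing points of ${\rm int}(\sigma)$ radially away from the barycenter $b$ of $\sigma$ (and fixing everything else) deformation retracts $X \setminus \{b\}$ onto $X \setminus {\rm int}(\sigma)$, and it is enough to prove $X \setminus \{b\} \simeq X$. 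Second, $\sigma$ being non-mandatory makes $\Lk_\sigma(\Sigma)$ contractible, and $\dim \Lk_\sigma(\Sigma) < \dim \Sigma$ since $\sigma \neq \emptyset$; moreover, by the identity $\Lk_\tau(\Sigma) = \Lk_{\tau \setminus \sigma}(\Lk_\sigma(\Sigma))$ for $\sigma \subseteq \tau$, the set $\Omega_\sigma := \{\tau \setminus \sigma \mid \sigma \subsetneq \tau \in \Omega\}$ consists of nonempty non-mandatory faces of $\Lk_\sigma(\Sigma)$. Hence the outer hypothesis applies to the pair $(\Lk_\sigma(\Sigma), \Omega_\sigma)$ and shows that $Z := |\Lk_\sigma(\Sigma)| \setminus \bigcup_{\phi \in \Omega_\sigma}{\rm int}(\phi)$ is contractible.

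\emph{Deleting the barycenter via a local cover.} By the local cone structure of polyhedra, $b$ has an open cone neighborhood in $|\Sigma|$ whose link is homeomorphic to $S^{\dim \sigma - 1} * |\Lk_\sigma(\Sigma)|$, and shrinking it we may assume it meets only the cofaces $\tau \supseteq \sigma$ of $\sigma$. Intersecting with $X$ removes exactly the cells of $\Lk_\sigma(\Sigma)$ indexed by $\Omega_\sigma$ from the $|\Lk_\sigma(\Sigma)|$-factor of this cone, so it leaves an open cone neighborhood $U$ of $b$ in $X$ whose link is homeomorphic to $S^{\dim \sigma - 1} * Z$; thus $U$ is contractible, and so is $U \setminus \{b\}$, since it deformation retracts onto its link $S^{\dim \sigma - 1} * Z$, a join with the contractible space $Z$. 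Now $\{X \setminus \{b\}, \, U\}$ is an open cover of $X$ with $U \cap (X \setminus \{b\}) = U \setminus \{b\}$, so $X$ is the homotopy pushout of $X \setminus \{b\} \leftarrow U \setminus \{b\} \to U$; as the right-hand map is a homotopy equivalence between contractible spaces, the inclusion $X \setminus \{b\} \hookrightarrow X$ is a homotopy equivalence. Chaining the equivalences, $|\Sigma| \setminus \bigcup_{\sigma \in \Omega}{\rm int}(\sigma) = X \setminus {\rm int}(\sigma) \simeq X \setminus \{b\} \simeq X$, which is contractible, completing the induction.

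\emph{The main obstacle.} The one delicate point is the identification of $U$ in the last step: one must verify that, under the product-with-cone description of a neighborhood of $b$, deleting from $X$ the cells $\{{\rm int}(\tau) \mid \sigma \subsetneq \tau \in \Omega\}$ corresponds precisely to replacing $|\Lk_\sigma(\Sigma)|$ by its code-complex realization with $\Omega_\sigma$ removed --- this cell-by-cell matching is exactly what makes the reduction from $\Sigma$ to $\Lk_\sigma(\Sigma)$ close up. The remaining ingredients --- the radial deformation retraction onto $X \setminus {\rm int}(\sigma)$, the local cone structure of $|\Sigma|$ at $b$, the fact that a pushout along an open cover computes the homotopy pushout, and the contractibility of a join with a contractible space --- are standard.
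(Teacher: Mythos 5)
Your proposal is a genuinely different argument from the paper's. The paper (Appendix~A) works \emph{upward}: it puts a good cover on $K$ by open vertex stars in the barycentric subdivision, invokes the nerve theorem to replace $K$ by the abstract nerve, then \emph{fills in} an inclusion-\emph{maximal} removed face $\sigma_1$ one at a time and uses Bj\"orner's gluing lemma, identifying the link of the new nerve vertex as $\Delta(P) * (\Lk_{\sigma_1}(\Sigma))'$. Choosing $\sigma_1$ maximal means no coface of $\sigma_1$ is absent, so $\Lk_{\sigma_1}(\Sigma)$ itself appears (no nested removal), and a single induction on $|\Omega|$ suffices. You work \emph{downward}: you \emph{delete} an inclusion-\emph{minimal} removed face $\sigma$, exploit the local cone structure at its barycenter to identify the link of $b$ inside $X$ as $S^{\dim\sigma-1} * Z$ with $Z$ the code-complex realization of $\Lk_\sigma(\Sigma) \setminus \Omega_\sigma$, and apply a Mayer--Vietoris/pushout argument for the open cover $\{X\setminus\{b\}, U\}$. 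Choosing $\sigma$ minimal keeps $\partial\sigma$ intact (a genuine sphere) but forces the link factor to be a code-complex realization rather than a full polyhedron, so you need the extra outer induction on $\dim\Sigma$. The two choices are dual; each makes one factor of the join clean at the cost of complicating the other. One concrete advantage of the paper's route is that by passing to nerves it stays in the category of abstract simplicial complexes, so Bj\"orner's gluing lemma applies with no technical caveats; your spaces $X$, $X\setminus\{b\}$, $Z$ are unions of open simplices (not CW complexes), so the open-cover pushout and the ``join with a contractible space is contractible'' step need a word justifying why everything has the homotopy type of a CW complex.

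There is one concrete step that, as written, is false: the asserted deformation retraction of $X\setminus\{b\}$ onto $X\setminus\operatorname{int}(\sigma)$ by ``pushing points of $\operatorname{int}(\sigma)$ radially away from $b$ and fixing everything else'' is not a continuous map. Take $y\in\operatorname{int}(\sigma)\setminus\{b\}$ and a sequence $x_n\to y$ with $x_n\in\operatorname{int}(\tau)$ for some $\tau\supsetneq\sigma$; then each $x_n$ is fixed, so the images converge to $y$, while $y$ itself is sent to a different point of $\partial\sigma$. To repair this, one must push a whole collar of $\operatorname{int}(\sigma)$ inside the closed star, with displacement damped in the $\Lk_\sigma$-direction of the cone-join coordinates on $\overline{\operatorname{St}}_\sigma(\Sigma) = b * (\partial\sigma * |\Lk_\sigma(\Sigma)|)$; even then the natural formula yields a retraction that is a homotopy equivalence rather than a strict deformation retraction (one checks separately that the composite on $X\setminus\operatorname{int}(\sigma)$ is homotopic to the identity). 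The conclusion $X\setminus\{b\}\simeq X\setminus\operatorname{int}(\sigma)$ does hold, but the one-line justification you give does not establish it. The cell-by-cell matching you flag yourself --- that intersecting the local cone at $b$ with $X$ replaces $|\Lk_\sigma(\Sigma)|$ by $Z$ --- is in fact correct (it reduces to the identity $\bigl(\partial\sigma*|\Lk_\sigma(\Sigma)|\bigr)\setminus\bigl(\text{open cells over }\Omega_\sigma\bigr) = \partial\sigma * Z$ in the join coordinates), so that is not where the difficulty lies.
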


We prove Lemma~\ref{lem:code-cpx} in Appendix~A.

\begin{proof}[Proof of Proposition~\ref{prop:realization}]
By construction, we have
	\begin{align*}
		\bigcup_{ i =1}^n V_i 
        ~=~ 
              \bigcup_{ \emptyset \neq \sigma \in \mathcal{C}}^{\cdot } \interior ~,
	\end{align*}
and the subset $\interior$ realizes the codeword $\sigma$.
If $\emptyset \notin \mathcal{C}$, define the stimulus space $X= \cup_{i=1}^n V_i$; otherwise, define $X$ to be the ambient 
Euclidean space. Thus, $\{V_1, V_2, \dots, V_n \}$ realizes $\mathcal{C}$ with respect to $X$.

Now assume
that $\mathcal{C}$ is locally good.
We must show that every intersection of the $V_i$'s is empty or contractible.
To this end, let $\emptyset \neq \tau \subseteq [n]$, and assume that $V_{\tau}$ is nonempty.  Note that 
\begin{align*}
V_{\tau} ~=~ \bigcup_{\tau \subseteq \sigma \in \mathcal{C}} \interior ~. 
\end{align*}
We consider two cases.
If $\tau \in \mathcal{C}$, then 
	$V_{\tau} = \mathrm{int}(\tau) \cup \bigcup_{\tau \subsetneq \sigma \in \mathcal{C}} \interior$.  Now 
    consider the deformation retract of $|\Delta^n|$ to the face $\overline{\tau}$, arising from orthogonal projection to that face.
    It is straightforward to check that this restricts to a deformation retraction of $V_{\tau}$ to $\mathrm{int}(\tau)$,
    which is contractible.  Thus, $V_{\tau}$ is contractible.
    
    Consider the remaining case, when $\tau \notin \mathcal{C}$.
Then $V_{\tau} = \bigcup_{\tau \subsetneq \sigma \in \mathcal{C}} \interior $.  
We will show that $V_{\tau}$ is contractible by 
proving first that $V_{\tau}$ is homotopy equivalent to the code-complex realization of the following {\em link of a code}~\cite[Definition 2.3]{giusti-preprint}
or, for short, ``code-link'':
\begin{align} \label{eq:code-lk}
 {\rm Lk}_{\tau}(\mathcal{C}) ~:=~
 \{ \eta \subseteq [n] \mid \tau \cap \eta = \emptyset \mbox{ and } \tau \cup \eta \in \mathcal{C} \}~,
\end{align}
and then proving that this code-complex realization, which we denote by $\mathcal{L}$,
is contractible.

To see that $V_{\tau} \simeq \mathcal{L}$, we compute:
\begin{align} \label{eq:V-tau}
V_{\tau} ~=~ \bigcup_{\tau \subsetneq \sigma \in \mathcal{C}} \interior
	~=~ \bigcup_{ \emptyset \neq \eta \in {\rm Lk}_{\tau}(\mathcal{C}) }^{\cdot} \mathrm{int}(\tau \ \dot{\cup} \ \eta)~, 
\end{align}
and
\begin{align} \label{eq:L}
\mathcal{L}~ =~ \bigcup_{ \emptyset \neq \eta \in {\rm Lk}_{\tau}(\mathcal{C}) }^{\cdot} \mathrm{int}(\eta)~.
\end{align}
It is straightforward to see, from~\eqref{eq:V-tau} and~\eqref{eq:L}, that $V_{\tau}$ deformation retracts to a copy of $\mathcal{L}$. 

To see that $ \mathcal{L}$
is contractible,
we will appeal to Lemma~\ref{lem:code-cpx} (where $\Sigma={\rm Lk}_{\tau} (\Delta(\mathcal{C})) $ and $\Sigma \setminus \Omega$ is the code-link in equation~\eqref{eq:code-lk}).  To apply Lemma~\ref{lem:code-cpx}, we must show that 
every $\sigma$ in ${\rm Lk}_{\tau} (\Delta(\mathcal{C}))$ but not in the code-link~\eqref{eq:code-lk} is {\em not} a mandatory codeword of ${\rm Lk}_{\tau} (\Delta(\mathcal{C}))$.  To this end, note that such a $\sigma$ satisfies $\sigma \cap \tau = \emptyset$ and $\sigma \cup \tau  \in \Delta(\mathcal{C}) \setminus \mathcal{C}$.  Thus, because $\mathcal{C}$ is locally good, 
the link 
${\rm Lk}_{\sigma \cup \tau} (\Delta(\mathcal{C})) = 
{\rm Lk}_{\sigma} ({\rm Lk}_{\tau} (\Delta(\mathcal{C} )))$ 
is contractible, and thus $\sigma$ is {\em not} a mandatory codeword of ${\rm Lk}_{\tau} (\Delta(\mathcal{C}))$.
  
We now apply Lemma~\ref{lem:code-cpx}:
the link ${\rm Lk}_{\tau} (\Delta(\mathcal{C}))$ is contractible (because $\mathcal{C}$ is locally good and $\tau \notin \mathcal{C}$), so
$ \mathcal{L}$ also is contractible.  Hence, $V_{\tau}$ is contractible.
\end{proof}

\begin{rem}
In the proof of Proposition~\ref{prop:realization},
the only place where we use the fact that $\mathcal C$ is  locally good, is at the end, when we show that $\mathcal L$ is contractible.
\end{rem}

\subsection{Main result}

Our next step is to modify, via a ``reverse deformation retract'', the realization from Proposition~\ref{prop:realization} so that the sets are open (Proposition~\ref{prop:realization-U}).  

To this end, for any $n$,
let $\Delta^n:=2^{[n]}$ be the complete simplicial complex on $[n]$, 
and consider an embedding of $|\Delta^n|$ in $\mathbb{R}^{n-1}$.
The facet-defining hyperplanes
of $|\Delta^n|$ form a hyperplane arrangement whose open chambers
(in the complement of the hyperplanes) 
correspond to nonempty subsets $\sigma$ of $[n]$ (see Figure~\ref{fig:R}).
Let $C_{\sigma}$ denote the closure of the chamber that corresponds to $\interior $.
Choose an ordering of 
the nonempty faces of $\Delta^n$ so that they are nondecreasing in dimension:
$0=\dim \sigma_1 \leq \dim \sigma_2 \leq \dots \leq \dim \sigma_{2^n-1}$.  
We define recursively the following sets (see the right-hand side of Figure~\ref{fig:R}):
\begin{align} \label{eq:R}
	R_{\sigma_k} ~:=~ 
	C_{\sigma_k} \setminus \left( \cup_{j=1}^{k-1} C_{\sigma_j} \right)
    ~.
\end{align}
We claim that the order of the $\sigma_i$'s does not matter.
Indeed, if $\dim \sigma_i = \dim \sigma_j$ (with $i \neq j$),
then the intersection $C_{\sigma_i} \cap C_{\sigma_j}$ is 
the (possibly empty) face $\sigma_i \cap \sigma_j$ of $|\Delta^n|$.
This face $\sigma_i \cap \sigma_j = \sigma_l$
is indexed by 
some $l < i, j$, and so this face,
regardless of whether $i <j$ or $j<i$, is neither in $R_{\sigma_i}$ nor $R_{\sigma_j}$.

We make the following several observations.  First, 
the $R_{\sigma}$'s are disjoint, and each $R_{\sigma}$ is convex and full-dimensional.  Also, $R_{\sigma}$ deformation retracts to $\interior $ via a deformation retract of $\mathbb{R}^{n-1}$ to $|\Delta^n|$.
Finally, the interior of $R_{\sigma}$ is the open chamber $C_{\sigma}$ of the hyperplane arrangement of $|\Delta^n|$'s facet-defining hyperplanes.

In analogy to how we built the sets $V_i$ from the sets $\interior$ (in 
Definition~\ref{def:code-cpx-realization}), we
now build sets $W_i$ from the sets $R_{\sigma}$ (which, we noted, deformation retract to the sets $\interior$).

\begin{prop} \label{prop:realization-W}
Let $\mathcal{C}$ be a code on $n$ neurons.
For each $i \in [n]$, consider the following subset of
$\mathbb{R}^{n-1}$:
	\begin{align} \label{eq:W}
		W_i ~:=~ \bigcup_{ i \in \sigma \in \mathcal{C}} R_{\sigma}~,
	\end{align}
where $R_{\sigma}$ is as in~\eqref{eq:R}. 
Then $\{W_1, W_2, \dots, W_n \}$ is a realization of $\mathcal{C} \setminus \{\emptyset\}$. 
\end{prop}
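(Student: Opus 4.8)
The plan is to exploit the fact that the regions $\{R_\sigma : \emptyset \neq \sigma \subseteq [n]\}$ form a partition of $\mathbb{R}^{n-1}$. They are pairwise disjoint and each is convex and full-dimensional (hence nonempty), as noted after~\eqref{eq:R}, and their union is all of $\mathbb{R}^{n-1}$: the $C_\sigma$ are the closures of the (finitely many) chambers of a hyperplane arrangement, so $\bigcup_\sigma C_\sigma$ is the closure of the complement of the arrangement, which is all of $\mathbb{R}^{n-1}$. Consequently every point $x \in \mathbb{R}^{n-1}$ lies in exactly one of these regions, which I will denote $R_{\sigma(x)}$; the whole proof is then bookkeeping about the map $x \mapsto \sigma(x)$.

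First I would translate membership in the $W_i$ into a statement about $\sigma(x)$: by~\eqref{eq:W} and the disjointness of the $R_\sigma$, a point $x$ lies in $W_i$ if and only if $\sigma(x) \in \mathcal{C}$ and $i \in \sigma(x)$. Fixing a nonempty $\tau \subseteq [n]$ and intersecting this over $i \in \tau$ gives the formula
\[
W_\tau ~=~ \bigcup_{\tau \subseteq \sigma \in \mathcal{C}} R_\sigma ,
\]
the exact analogue for the $W$'s of the formula for $V_\tau$ used in the proof of Proposition~\ref{prop:realization}.

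The heart of the argument is the computation of the region cut out by $\tau$, namely $W_\tau \setminus \bigcup_{j \notin \tau} W_j$. A point $x$ belongs to this set if and only if $\sigma(x) \in \mathcal{C}$, $\tau \subseteq \sigma(x)$, and $x \notin W_j$ for every $j \notin \tau$; since $\sigma(x) \in \mathcal{C}$, the last condition says precisely that $j \notin \sigma(x)$ for all $j \notin \tau$, i.e.\ $\sigma(x) \subseteq \tau$. Together with $\tau \subseteq \sigma(x)$ this forces $\sigma(x) = \tau$, so
\[
W_\tau \setminus \bigcup_{j \notin \tau} W_j ~=~
\begin{cases} R_\tau & \text{if } \tau \in \mathcal{C},\\ \emptyset & \text{if } \tau \notin \mathcal{C}. \end{cases}
\]
As $R_\tau$ is nonempty, this region is nonempty exactly when $\tau \in \mathcal{C}$, which is the realization condition~\eqref{realize} for every nonempty $\tau$.

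Finally I would dispose of the empty codeword by taking the stimulus space to be $X := \bigcup_{i=1}^n W_i$. Since every nonempty $\sigma \in \mathcal{C}$ contains some index $i$ and therefore contributes $R_\sigma$ to $W_i$, we have $X = \bigcup_{\emptyset \neq \sigma \in \mathcal{C}} R_\sigma$, hence $W_\emptyset = X = \bigcup_{j \in [n]} W_j$ and $W_\emptyset \setminus \bigcup_j W_j = \emptyset$; thus $\emptyset$ is not realized, which is consistent with the realized code being $\mathcal{C} \setminus \{\emptyset\}$. I do not expect any genuine obstacle here — the argument is elementary once the partition property of the $R_\sigma$ is in hand — but the one point that deserves care is this choice of stimulus space (mirroring the choice made in Proposition~\ref{prop:realization}), since with a larger ambient stimulus space the code realized would instead be $(\mathcal{C} \setminus \{\emptyset\}) \cup \{\emptyset\}$.
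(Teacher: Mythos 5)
Your proof is correct and takes the same route as the paper, which simply asserts the key facts — that $\bigcup_i W_i$ is the disjoint union $\bigcup_{\emptyset \neq \sigma \in \mathcal{C}} R_\sigma$, that $R_\sigma$ realizes $\sigma$, and that choosing $X = \bigcup_i W_i$ excludes the empty codeword — without spelling out the bookkeeping via the partition $\{R_\sigma\}$ of $\mathbb{R}^{n-1}$ that you make explicit. Your unpacking (the map $x \mapsto \sigma(x)$, the formula $W_\tau \setminus \bigcup_{j \notin \tau} W_j = R_\tau$ when $\tau \in \mathcal{C}$ and $\emptyset$ otherwise) is exactly the reasoning the paper compresses into ``by construction.''
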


\begin{proof}
By construction, we have
	\begin{align*}
		\bigcup_{ i =1}^n W_i 
        ~=~ 
              \bigcup_{ \emptyset \neq \sigma \in \mathcal{C}}^{\cdot } R_{\sigma}~,
	\end{align*}
and the subset $R_{\sigma}$ realizes the codeword $\sigma$.
Define the stimulus space $X= \cup_{i=1}^n W_i$. 
Then the collection 
$\{W_1, W_2, \dots, W_n \}$ realizes $\mathcal{C} \setminus \{ \emptyset \}$ with respect to $X$.
\end{proof}

The sets $W_i$ in Proposition~\ref{prop:realization-W} are in general not open sets (see Figure~\ref{fig:R}), but they are unions of full-dimensional convex sets, so we now consider the interiors. 

\begin{prop} \label{prop:realization-U}
Let $\mathcal{C}$ be a code on $n$ neurons.
For each $i \in [n]$, consider the following subset of
$\mathbb{R}^{n-1}$:
	\begin{align*}
		U_i ~:=~ {\rm int} (W_i)~,
	\end{align*}
where $W_i$ is as in~\eqref{eq:W}.  
Then $\{U_1, U_2, \dots, U_n \}$ is a realization of $\mathcal{C}$.
Moreover, if $\mathcal{C}$ is locally good, then $\{U_1, U_2, \dots, U_n \}$ is a {\em good-cover} realization of $\mathcal{C} \setminus \{ \emptyset \}$.
\end{prop}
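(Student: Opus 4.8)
The plan is to prove the two assertions separately, leaning on the realization $\{W_1,\dots,W_n\}$ of Proposition~\ref{prop:realization-W} and the code-complex realization $\{V_1,\dots,V_n\}$ of Proposition~\ref{prop:realization}.

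\emph{That $\{U_i\}$ realizes $\mathcal C$.} Since $U_i={\rm int}(W_i)$ and the interior operation commutes with finite intersections, for every nonempty $\tau\subseteq[n]$ we have $U_\tau=\bigcap_{i\in\tau}{\rm int}(W_i)={\rm int}\!\big(\bigcap_{i\in\tau}W_i\big)={\rm int}(W_\tau)$, and, because the cells $R_\sigma$ are pairwise disjoint, $W_\tau=\bigcup_{\tau\subseteq\sigma\in\mathcal C}R_\sigma$. I would then inspect the atom $U_\sigma\setminus\bigcup_{j\notin\sigma}U_j$ for each nonempty $\sigma$. If $\sigma\in\mathcal C$, the open chamber ${\rm int}(R_\sigma)$ lies in $R_\sigma\subseteq W_i$ for every $i\in\sigma$, hence in $U_i$; and it is disjoint from $W_j$ for $j\notin\sigma$ because it meets no cell $R_\rho$ with $\rho\neq\sigma$ and $\sigma$ is not among the indices $\rho\ni j$ of $W_j$. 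Thus ${\rm int}(R_\sigma)$ (nonempty and full-dimensional) witnesses $\sigma\in\mathcal C(\{U_i\})$. If $\sigma\notin\mathcal C$ and $x\in U_\sigma\setminus\bigcup_{j\notin\sigma}U_j$, then $x$ has an open ball $B$ contained in $W_\sigma=\bigcup_{\sigma\subsetneq\rho\in\mathcal C}R_\rho$. Working in coordinates in which $|\Delta^n|$ is the standard simplex, the chambers whose closures contain $x$ are exactly those indexed by $P\cup S$ with $S$ ranging over subsets of the zero-set of $x$, where $P$ is the positive-set of $x$; since $B$ meets every such open chamber and $B\subseteq W_\sigma$, each $P\cup S$ must lie in $\mathcal C$ and strictly contain $\sigma$. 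Choosing $j\in P\setminus\sigma$ (nonempty), the same description gives $B\subseteq W_j$, so $x\in U_j$ with $j\notin\sigma$, a contradiction. Picking the stimulus space exactly as in Proposition~\ref{prop:realization} ($\mathbb R^{n-1}$ when $\emptyset\in\mathcal C$, otherwise $\bigcup_i U_i$) then yields $\mathcal C(\{U_i\})=\mathcal C$.

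\emph{That $\{U_i\}$ is a good cover when $\mathcal C$ is locally good.} Each $U_i$ is open by construction, so the remaining point is that every nonempty intersection $U_\tau={\rm int}(W_\tau)$ is contractible. The idea is to transport the contractibility that is already available. By Proposition~\ref{prop:realization} (this is the only place the locally good hypothesis enters that result), the code-complex set $V_\tau=\bigcup_{\tau\subseteq\sigma\in\mathcal C}{\rm int}(\sigma)$ is contractible. I would use the deformation retraction $r_s\colon\mathbb R^{n-1}\to|\Delta^n|$ that restricts on each cell $R_\sigma$ to a deformation retraction onto ${\rm int}(\sigma)$; since $r_s$ carries each $R_\sigma$ into itself, it carries $W_\tau$ into itself with $r_1(W_\tau)=V_\tau$, so $W_\tau$ deformation retracts onto a homeomorphic copy of $V_\tau$ and is therefore contractible. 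It then remains to check that passing to the open set ${\rm int}(W_\tau)$ does not change the homotopy type — concretely, that $W_\tau$ deformation retracts onto ${\rm int}(W_\tau)$ (equivalently, that the topological boundary of $W_\tau$ inside $\mathbb R^{n-1}$ admits a collar in $W_\tau$) — whence $U_\tau\simeq W_\tau\simeq V_\tau$ is contractible.

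The main obstacle is exactly this last step. The set $W_\tau$ is a union of closed full-dimensional convex cells $R_\sigma$ ($\tau\subseteq\sigma\in\mathcal C$), glued along faces of the arrangement of $|\Delta^n|$'s facet-hyperplanes, and two such cells can be incident only along a face of low dimension; near such a face $W_\tau$ need not look like a manifold-with-boundary, so one cannot merely invoke a collar. The resolution has to exploit the locally good hypothesis again, now beyond its appearance in Proposition~\ref{prop:realization}: it constrains precisely which cells $R_\sigma$ occur in $W_\tau$, and one must show that this constraint forces the incidence pattern of the cells present in $W_\tau$ to be collarable, so that the interior operation is homotopy-trivial. (If a clean direct argument for this is not available, the natural fallback is to replace ${\rm int}(W_i)$ by a sufficiently small open neighborhood of $W_i$ — small enough to preserve all atoms, as in the "reverse deformation retract" alluded to at the start of the section — which visibly deformation retracts onto $W_i$ and hence onto $V_i$.) I expect that establishing this collarability, i.e.\ that the interior operation preserves contractibility for every $W_\tau$ when $\mathcal C$ is locally good, is where essentially all the work lies; the realization statement and the reduction of contractibility of $U_\tau$ to that of $V_\tau$ are comparatively routine.
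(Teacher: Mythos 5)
Your proof of the first assertion (that $\{U_i\}$ realizes $\mathcal C$) is correct and tracks the paper's argument, organized slightly differently (by codeword membership rather than by two set containments) but relying on the same ideas: the identity $U_\tau={\rm int}(W_\tau)$, the cell decomposition into $R_\sigma$'s, and the analysis of which chambers a small ball around $x$ can meet.

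For the second assertion you correctly reduce the problem to proving $U_\tau\simeq V_\tau$, and you correctly flag that the difficulty is passing from $W_\tau$ to its interior. But the two escape routes you propose are both off. First, the paper does \emph{not} invoke the locally-good hypothesis again to justify this step; ``locally good'' enters only through Proposition~\ref{prop:realization} (contractibility of $V_\tau$), and the remaining argument is purely geometric and code-independent. Second, your fallback of replacing ${\rm int}(W_i)$ by a small open neighborhood $N_\epsilon(W_i)$ generally changes the realized code: two sets $W_i$, $W_j$ can have empty intersection yet touch (e.g.\ $\mathcal C=\{12,13\}$, where $W_2=R_{12}$ and $W_3=R_{13}$ meet at distance zero along the ray through vertex $1$), so any $\epsilon$-fattening manufactures a spurious codeword $ij$. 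The paper's actual device sidesteps the collarability worry entirely: it introduces the intermediate set $Z_\tau:=U_\tau\cup V_\tau$ (so $Z_\tau$ contains the open set $U_\tau={\rm int}(W_\tau)$ together with the face-interiors of $V_\tau$ but \emph{not} the troublesome low-dimensional boundary strata of $W_\tau$ that cause the non-manifold incidences you noticed), and then shows that $Z_\tau$ deformation retracts onto $V_\tau$ (restrict the cell-wise retraction $W_\tau\to V_\tau$) and weakly deformation retracts onto $U_\tau$ by simultaneously translating the facet-defining hyperplanes of $|\Delta^n|$ a small distance outward, sweeping the extra points of $Z_\tau$ into $U_\tau$. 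This zig-zag through $Z_\tau$ is the missing idea; without it, or some substitute, the step $U_\tau\simeq V_\tau$ in your write-up is a genuine gap.
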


Before proving Proposition~\ref{prop:realization-U}, we give an example of the realization given in the proposition.

\begin{exa} \label{ex:realize-code-U}
We return to the locally good code $\mathcal{C} = \{123,12,23,1,2,\emptyset\}$
from Example~\ref{ex:realize-code}.
The good-cover realization $\{U_1,U_2,U_3\}$ of $\mathcal{C}$ from Proposition~\ref{prop:realization-U}, along with $U_1 \cup U_2 \cup U_3$,
is depicted here:

\begin{center}
\begin{tikzpicture}[scale=1]
    \tikzstyle{point}=[circle]
	    \node[label=below:{$U_1$}](u1) at (-8,-0.05){};
    \node[label=below:{$U_2$}](u1) at (-4,-0.9){};
    \node[label=below:{$U_3$}](u1) at (0,-0.9){};
    \node[label=below:{$U_1\cup U_2 \cup U_3$}](u1) at (5,-0.9){};
			    	\fill[gray!25] (-8.55,2.55)--(-8,1.7)--(-9,0)--(-6,0)--(-7.5,2.55);
	    \node (d1)[point] at (-9,0) {};
    \node (e1)[point] at (-7,0) {};
    \node (f1)[point] at (-8,1.7) {};
   	    \node (h1)[point] at (-9.5,-0.85) {};
 	    \node (i1)[point] at (-6,0) {};
    \node (j1)[point] at (-6.5,-.85) {};
	    \node (k1)[point] at (-8.5,2.55) {};
    \node (l1)[point] at (-7.5,2.55) {};
	    \draw[dashed]   (d1.center) -- (e1.center)node[pos=.5,above,yshift = .2cm]{$R_{123}$} -- (f1.center)node[pos=.5,right]{$R_{12}$} -- (d1.center);
	     \draw[dashed]   (i1.center) -- (e1.center);
	     \draw[dashed]   (k1.center) -- (f1.center) -- (l1.center)node[pos=0,above,yshift=.3cm]{$R_1$};			
     		    	\fill[gray!25] (-3.5,2.55)--(-5.5,-0.85)--(-2.5,-0.85)--(-2,0);
	    \node (d2)[point] at (-5,0) {};
    \node (e2)[point] at (-3,0) {};
    \node (f2)[point] at (-4,1.7) {};
   	    \node (h2)[point] at (-5.5,-0.85) {};
 	    \node (i2)[point] at (-2,0) {};
    \node (j2)[point] at (-2.5,-.85) {};
	    \node (k2)[point] at (-4.5,2.55) {};
    \node (l2)[point] at (-3.5,2.55) {};
	    \draw[dashed]   (d2.center) -- (e2.center)node[pos=.5,below]{$R_{23}$}node[pos=.5,above,yshift = .2cm]{$R_{123}$} -- (f2.center)node[pos=.5,right]{$R_{12}$} -- (d2.center);
     	     \draw[dashed]   (d2.center) -- (h2.center);
	     \draw[dashed]   (i2.center) -- (e2.center)node[pos=.5,below] {$R_2$} -- (j2.center);
	     \draw[dashed]   (f2.center) -- (l2.center);
     		    	\fill[gray!25] (0, 1.7)--(-1.5,-0.85)--(1.5,-0.85);
	    \node (d3)[point] at (-1,0) {};
    \node (e3)[point] at (1,0) {};
    \node (f3)[point] at (0,1.7) {};
   	    \node (h3)[point] at (-1.5,-0.85) {};
 	    \node (i3)[point] at (2,0) {};
    \node (j3)[point] at (1.5,-.85) {};
	    \node (k3)[point] at (-0.5,2.55) {};
    \node (l3)[point] at (0.5,2.55) {};
    	    \draw[dashed]   (d3.center) -- (e3.center)node[pos=.5,below]{$R_{23}$}node[pos=.5,above,yshift = .2cm]{$R_{123}$} -- (f3.center) -- (d3.center);
          \draw[dashed]   (d3.center) -- (h3.center);
	     \draw[dashed]   (e3.center) -- (j3.center);
     		    	\fill[gray!25] (4,0)--(6,0)--(5,1.7); 	\fill[gray!25] (4,0)--(3.5,-0.85)--(6.5,-0.85)--(6,0); 	\fill[gray!25] (6.5,-0.85)--(6,0)--(7,0); 	\fill[gray!25] (6,0)--(7,0)--(5.5,2.55)--(5,1.7); 	\fill[gray!25] (5.5,2.55)--(5,1.7)--(4.5,2.55); 	    \node (d)[point] at (4,0) {};
    \node (e)[point] at (6,0) {};
    \node (f)[point] at (5,1.7) {};
   	    \node (h)[point] at (3.5,-0.85) {};
 	    \node (i)[point] at (7,0) {};
    \node (j)[point] at (6.5,-.85) {};
	    \node (k)[point] at (4.5,2.55) {};
    \node (l)[point] at (5.5,2.55) {};
	    \draw[dashed]   (d.center) -- (e.center)node[pos=.5,below]{$R_{23}$}node[pos=.5,above,yshift = .2cm]{$R_{123}$} -- (f.center)node[pos=.5,right]{$R_{12}$} -- (d.center);
     	     \draw[dashed]   (d.center) -- (h.center);
	     \draw[dashed]   (i.center) -- (e.center)node[pos=.5,below] {$R_2$} -- (j.center);
	     \draw[dashed]   (k.center) -- (f.center) -- (l.center)node[pos=0,above,yshift=.3cm]{$R_1$};
\end{tikzpicture}
\end{center}

\end{exa}

\begin{proof}[Proof of Proposition~\ref{prop:realization-U}]
By construction, the $U_i$'s are open.
So, we must show that 
 (1) the $U_i$'s realize $\mathcal{C} \setminus \{ \emptyset \}$, that is, $\mathcal{C}(\{U_i\}) = \mathcal{C}  \setminus \{ \emptyset \}$, and 
 (2) if $\mathcal{C}$ is locally good, then 
	$\{U_1, U_2, \dots, U_n \}$ is a good cover. 

\underline{Proof of (1)}. 
We begin by proving the containment 
$\mathcal{C}(\{U_i\}) \supseteq \mathcal{C}  \setminus \{ \emptyset \}$, where we define the stimulus space to be $X= \cup_{i=1}^n U_i$ (so, $\emptyset \notin \mathcal{C}(\{U_i\})$). 
Take $\emptyset \neq \sigma \in \mathcal{C}$.  We must show that $\sigma \in \mathcal{C}(\{U_i\})$.  Let $x$ be in the interior of $R_{\sigma}$.  Then, by construction, $ x \in U_{\sigma} \setminus \left( \cup_{j \notin \sigma} U_j \right)$, so $\sigma \in \mathcal{C}(\{U_i\})$.

To prove the remaining containment, $\mathcal{C}(\{U_i\}) \subseteq \mathcal{C}  \setminus \{ \emptyset \}$,  
let $\sigma \in \mathcal{C}(\{U_i\})$.  
As explained above, $\sigma \neq \emptyset$.  Let $x \in U_{\sigma}\setminus \left( \cup_{j \notin \sigma} U_j \right)$.  
We consider two cases.
If $x$ is {\em not} on a facet-defining hyperplane of $|\Delta^n|$, then $x$ is in the interior of some $R_{\tau}$.
It is straightforward to check that $\tau = \sigma$, so ${\rm int}(R_{\sigma}) \subseteq U_{\sigma}$, and thus $\sigma \in \mathcal{C}$.

In the second case,
$x$ is on exactly $m$ facet-defining hyperplanes of $|\Delta^n|$ (for some $m \geq 1$).
Crossing exactly one such hyperplane means going from some region $R_{\alpha}$ to some region $R_{\alpha \cup \{i\}}$ (for some $i \notin \alpha$), or vice-versa.  Thus,  a small neighborhood $B$ of $x$ intersects exactly $2^m$ regions $R_{\tau}$: precisely those with 
\begin{align} \label{eq:tau}
	\eta \subseteq \tau \subseteq \widetilde \eta~,
\end{align}
for some $\eta \subseteq \widetilde \eta \subseteq [n]$ with $|\eta|+m=|\widetilde \eta|$.

Recall that $x$ is in the open set $U_{\sigma}$, so 
$B \subseteq U_{\sigma}$, and thus the interiors of the $2^m$ regions $R_{\tau}$ are contained in $U_{\sigma}$.  So, all sets $\tau$ given in~\eqref{eq:tau} are in the code $\mathcal{C}$.  Thus, to show that $\sigma \in \mathcal{C}$, it suffices to show that $\eta = \sigma$.
 The containment $\eta \supseteq \sigma$ follows from the fact that ${\rm int}(R_{\eta}) \subseteq U_{\sigma}$ (as explained above).
 We prove $\eta \subseteq \sigma$ by contradiction.  
Assume that there exists $k \in \eta \setminus \sigma$.
Then we have: 
\begin{align*}
	x ~\in~ B 
    ~\subseteq~ \cup_{\eta \subseteq \tau \subseteq \widetilde \eta~} R_{\tau} 
    ~\subseteq ~U_{\eta} ~\subseteq ~U_k~.
\end{align*}
Thus, $x \in U_k$, where $k \notin \sigma$, which  contradicts the choice of $x$.  So, $\eta \subseteq \sigma$ holds. 

\underline{Proof of (2)}. 
Assume that $\mathcal{C}$ is locally good.
 Let $\emptyset \neq \tau \subseteq [n]$, and assume that $U_{\tau}$ is nonempty.  We must show that 
 $U_{\tau}$ is contractible.  
 By Proposition~\ref{prop:realization}, $V_{\tau}$ is contractible, so it is enough to show that 
 $U_{\tau} \simeq V_{\tau}$.  
 We will prove this by showing that the set $Z_{\tau} := U_{\tau} \bigcup V_{\tau}$ deformation retracts to $V_{\tau}$ and also weak deformation retracts~\cite{hatcher}  to $U_{\tau}$.  
 It is straightforward to check that $U_{\tau} = {\rm int}(W_{\tau})$, and $W_{\tau}$ is a (full-dimensional) set that deformation retracts to $V_{\tau}$.  
 Thus, $Z_{\tau}=U_{\tau} \bigcup V_{\tau}$ deformation retracts to $V_{\tau}$.
Finally, we obtain a weak deformation retract of $Z_{\tau}$ to $U_{\tau}$
via the following homotopy: we simultaneously translate each facet-defining hyperplane of the simplex $| \Delta^n|$ a small distance away from the simplex, so that the subset of $Z_{\tau}$ that is ``swept up'' is pushed into $U_{\tau}$.   
 \end{proof}

It is natural to ask whether a closed-set version of Proposition~\ref{prop:realization-U}
holds:
	\begin{que} \label{q:closed}
	Is every locally good code a closed-good-cover code?
	\end{que}
\noindent
We could try to resolve this question by 
``closing'' our open-good-cover realizations; we ask, For a locally good code~$\mathcal{C}$, is $\{\overline{U_i}\}$ a closed-good-cover realization
of $\mathcal{C}$, where $\{U_i\}$ is as in Proposition~\ref{prop:realization-U}?
However, this is not true.
Indeed, 
it is easy to check that for the locally good code 
$\mathcal{C}= \{1,12,13\}$, 
the open realization 
$\{U_1, U_2, U_3\}$,
when we take closures, 
yields a code $\mathcal{C}( \{ \overline{U_1},
\overline{U_2},\overline{U_3} \} )$ with the
``extra'' codeword 123.

\begin{thm} \label{thm:iff}
A code is locally good if and only if it is a good-cover code.
\end{thm}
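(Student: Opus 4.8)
The plan is to derive Theorem~\ref{thm:iff} almost immediately from the machinery already assembled, combined with the standard implication from Proposition~\ref{prop:locally-good}. The two directions split cleanly. For the ``good-cover $\Rightarrow$ locally good'' direction, nothing new is needed: this is exactly the second implication in~\eqref{relation} of Proposition~\ref{prop:locally-good}, due to Giusti--Itskov and Curto \emph{et al.} So the entire content of the theorem is the converse, ``locally good $\Rightarrow$ good-cover code,'' and this is precisely what Proposition~\ref{prop:realization-U} delivers, up to the bookkeeping about the empty codeword.

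Here is how I would carry out the converse. Let $\mathcal{C}$ be a locally good code on $n$ neurons. First dispose of the empty set using Remark~\ref{rmk:empty-set}: $\mathcal{C}$ is a good-cover code if and only if $\mathcal{C} \cup \{\emptyset\}$ is, and being locally good is likewise insensitive to whether $\emptyset \in \mathcal{C}$ (one can check directly from Definition~\ref{def:local-obs}, or simply note $\emptyset \in \Delta(\mathcal{C})$ always and $\Delta(\mathcal{C}) = \Delta(\mathcal{C} \cup \{\emptyset\})$). So we may assume $\emptyset \in \mathcal{C}$. Now apply Proposition~\ref{prop:realization-U}: since $\mathcal{C}$ is locally good, the collection $\{U_1,\dots,U_n\}$ of open sets in $\mathbb{R}^{n-1}$ defined there is a good-cover realization of $\mathcal{C} \setminus \{\emptyset\}$ --- that is, each $U_i$ is contractible (being the interior of $W_i$, which deformation retracts to the contractible set $V_i$ by Proposition~\ref{prop:realization}, via the $U_\tau \simeq V_\tau$ argument with $\tau = \{i\}$), each nonempty intersection $U_\tau$ is contractible, and $\mathcal{C}(\{U_i\}) = \mathcal{C} \setminus \{\emptyset\}$. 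Hence $\mathcal{C} \setminus \{\emptyset\}$ is a good-cover code, and by Remark~\ref{rmk:empty-set} so is $\mathcal{C} = (\mathcal{C} \setminus \{\emptyset\}) \cup \{\emptyset\}$. Combining both directions gives the equivalence.

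The proof is therefore genuinely just an assembly step; there is no new obstacle at this stage, since every hard piece --- the contractibility of the code-complex realization and its intersections (Lemma~\ref{lem:code-cpx} and Proposition~\ref{prop:realization}), and the passage to open sets via the ``reverse deformation retract'' and weak deformation retract (Proposition~\ref{prop:realization-U}) --- has already been established. If I had to name the most delicate point to state carefully, it would be the empty-codeword bookkeeping: Proposition~\ref{prop:realization-U} produces a good cover of $\mathcal{C} \setminus \{\emptyset\}$, not of $\mathcal{C}$ itself, so one must explicitly invoke Remark~\ref{rmk:empty-set} to close the gap, and one should note in passing that local goodness is preserved under adding or removing $\emptyset$ so that the hypothesis transfers correctly. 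Everything else is a one-line citation of the results proved above.

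\begin{proof}
By Proposition~\ref{prop:locally-good}, every good-cover code is locally good. For the converse, let $\mathcal{C}$ be a locally good code. Since $\Delta(\mathcal{C}) = \Delta(\mathcal{C} \cup \{\emptyset\})$ and the set of mandatory codewords depends only on the simplicial complex, Proposition~\ref{prop:mandatory} shows that $\mathcal{C} \cup \{\emptyset\}$ is also locally good; so by Remark~\ref{rmk:empty-set} we may assume $\emptyset \in \mathcal{C}$. Now apply Proposition~\ref{prop:realization-U}: the open sets $\{U_1, U_2, \dots, U_n\}$ defined there form a good-cover realization of $\mathcal{C} \setminus \{\emptyset\}$. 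Hence $\mathcal{C} \setminus \{\emptyset\}$ is a good-cover code, and therefore, by Remark~\ref{rmk:empty-set}, so is $\mathcal{C}$.
\end{proof}
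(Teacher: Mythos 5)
Your proof is correct and takes essentially the same route as the paper: the backward direction is Proposition~\ref{prop:locally-good}, and the forward direction is Proposition~\ref{prop:realization-U} together with Remark~\ref{rmk:empty-set}, with your version just spelling out the empty-codeword bookkeeping a bit more explicitly than the paper's one-line citation. One small caveat on the prose (not the proof environment, which is fine): the claim that ``being locally good is insensitive to whether $\emptyset \in \mathcal{C}$'' is not true in the removal direction---if $\Delta(\mathcal{C})$ is not contractible then $\emptyset$ is a mandatory codeword since $\Lk_\emptyset(\Delta(\mathcal{C})) = \Delta(\mathcal{C})$, so deleting $\emptyset$ from a locally good code can destroy local goodness---but your actual argument only invokes the valid direction ($\mathcal{C}$ locally good $\Rightarrow$ $\mathcal{C} \cup \{\emptyset\}$ locally good), so no harm is done.
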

\begin{proof}
The backward direction is in Proposition~\ref{prop:locally-good}. The forward direction follows from Proposition~\ref{prop:realization-U} and 
the fact that if $\mathcal{C} \cup \{\emptyset\}$ is a good-cover code, then so is $\mathcal{C}$ (Remark~\ref{rmk:empty-set}).
\end{proof}

Finally, note that the good-cover realizations from Proposition~\ref{prop:realization-U} 
are embedded in an $(n-1)$-dimensional Euclidean space, where $n$ is the number of neurons. 
The following result shows that this embedding dimension can not, in general, be improved.
\begin{prop} \label{prop:emb-dim}
The code on $n$ neurons comprising all proper subsets of $[n]$, 
\[
\mathcal C_n ~:=~
	2^{[n]} \setminus \{123\dots n\}~,
\]
is locally good, and can not be realized by a good cover in $\mathbb{R}^{n-2}$.
\end{prop}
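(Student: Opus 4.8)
The plan is to treat the two assertions separately: the first is essentially formal, and the second combines the nerve theorem with the vanishing of top-dimensional homology of open subsets of Euclidean space.

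First I would record that $\mathcal{C}_n = 2^{[n]}\setminus\{[n]\}$ is already closed under taking subsets (a subset of a proper subset of $[n]$ is again a proper subset), so $\Delta(\mathcal{C}_n)=\mathcal{C}_n$; concretely this is the boundary complex $\partial\Delta^{n-1}$ of the full simplex on $[n]$, whose geometric realization is the sphere $S^{n-2}$. Since every face of $\Delta(\mathcal{C}_n)$ is by construction a codeword of $\mathcal{C}_n$, the inclusion $\mathcal{M}(\Delta(\mathcal{C}_n))\subseteq\mathcal{C}_n$ is automatic, so Proposition~\ref{prop:mandatory} gives that $\mathcal{C}_n$ is locally good. (By Theorem~\ref{thm:iff} together with Proposition~\ref{prop:realization-U}, $\mathcal{C}_n$ is then realizable by a good cover in $\R^{n-1}$, so the remaining content of the proposition is the optimality of that dimension.)

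Next, for the non-realizability, I would argue by contradiction. Suppose $\mathcal{C}_n$ is realized by a good cover $\mathcal{U}=\{U_1,\dots,U_n\}$ whose members are open subsets of $\R^{n-2}$ (Remark~\ref{rmk:empty-set} lets us ignore whether $\emptyset$ is a codeword, as this changes neither good-cover realizability nor the ambient dimension). Each $U_i$ is nonempty because $\{i\}\in\mathcal{C}_n$, each $U_i$ and each nonempty intersection of the $U_i$ is contractible, and by Remark~\ref{rmk:nerve} the nerve is $\mathcal{N}(\mathcal{U})=\Delta(\mathcal{C}(\mathcal{U}))=\Delta(\mathcal{C}_n)=\partial\Delta^{n-1}$. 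Set $S:=\bigcup_{i=1}^n U_i$, an open --- hence metrizable, hence paracompact --- subset of $\R^{n-2}$. The Nerve Theorem (Proposition~\ref{prop:nerve-thm}) then yields a homotopy equivalence $S\simeq|\partial\Delta^{n-1}|\cong S^{n-2}$, so $\widetilde{H}_{n-2}(S;\Z)\cong\Z$.

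Finally I would derive the contradiction from the standard fact that a nonempty open set $V\subseteq\R^{m}$ has $\widetilde{H}_m(V;\Z)=0$: when $m\ge 1$ each connected component of $V$ is a connected open subset of $\R^m$, hence a non-compact $m$-manifold without boundary --- non-compact because a component that were compact would also be closed in $\R^m$ and so equal all of $\R^m$ --- and a non-compact connected $m$-manifold has vanishing $m$-th homology; when $m=0$ the claim is just that the one-point space is connected. Applying this with $V=S$ and $m=n-2$ contradicts $\widetilde{H}_{n-2}(S;\Z)\cong\Z$, so no such good cover exists. The argument is short, and the only point needing genuine care is the convention underlying ``good cover in $\R^{n-2}$'', namely that the covering sets are open in $\R^{n-2}$, which is what makes $S$ an honest open subset of $\R^{n-2}$; once that is fixed, the nerve theorem and the top-homology vanishing do all the work, and I do not expect any serious obstacle.
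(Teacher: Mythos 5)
Your proof is correct and follows essentially the same route as the paper: identify $\mathcal{C}_n$ as the simplicial complex $\partial\Delta^{n-1}$ (hence locally good), apply the nerve theorem to conclude that any good-cover realization in $\R^{n-2}$ would have union homotopy equivalent to $S^{n-2}$, and derive a contradiction from the ambient dimension. The only difference is one of detail: the paper simply asserts that no subset of $\R^{n-2}$ is homotopy equivalent to $S^{n-2}$, while you carefully justify the weaker (and sufficient) statement that an \emph{open} subset of $\R^{n-2}$ has vanishing $(n-2)$-nd homology via the noncompact-manifold argument, which is a nice tightening since the cover is by open sets in any case.
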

\begin{proof}
The code $\mathcal C_n$ is locally good, as it is a simplicial complex and hence contains all its mandatory codewords.
Next, suppose for contradiction that $\mathcal U=\{U_1, U_2, \dots, U_{n}\}$ forms a good-cover realization of~$\mathcal C_n$ in $\mathbb R^{n-2}$.  By the nerve lemma (Proposition~ \ref{prop:nerve-thm}), the union $\cup_{i=1}^n U_i$ is homotopy equivalent to the nerve of $\mathcal U$, which is the boundary of the $(n-1)$-simplex.  However, no subset of $\mathbb R^{n-2}$ is homotopy equivalent to an $(n-2)$-sphere.  We have reached a contradiction.
\end{proof}

\section{Undecidability of the good-cover decision problem}
\label{sec:undecidability}
Having shown that being locally good is equivalent to being a good-cover code (Theorem~\ref{thm:iff}),
we now prove that the corresponding decision problem is undecidable (Theorem~\ref{thm:undecidable}).
The proof hinges on the undecidability of determining whether a homology ball 
is contractible (Lemma~\ref{lem:undecidable})~\cite{tancer-complexity}.

We say that a code $\mathcal{C}$ is $k$-sparse if its simplicial complex $\Delta(\mathcal{C})$ has dimension at most $k-1$:
\begin{defn}
A code $\mathcal{C}$ is \textbf{$k$-sparse} if $|\sigma| \leq k$ for all $\sigma \in \mathcal{C}$.
\end{defn}

\begin{lem}[Tancer~\cite{tancer-complexity}] \label{lem:undecidable}
The problem of deciding whether a given 4-dimensional simplicial complex is contractible is undecidable.
\end{lem}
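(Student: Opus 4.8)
The plan is a reduction from the Adian--Rabin theorem: there is no algorithm that, given a finite group presentation $\mathcal{P}=\langle x_1,\dots,x_n\mid R_1,\dots,R_m\rangle$, decides whether the presented group $G_{\mathcal{P}}$ is trivial. So it suffices to exhibit a Turing-computable map $\mathcal{P}\mapsto K_{\mathcal{P}}$ producing a finite $4$-dimensional simplicial complex $K_{\mathcal{P}}$ that is contractible if and only if $G_{\mathcal{P}}=\{1\}$: a decision procedure for contractibility of $4$-dimensional simplicial complexes would then decide triviality of finitely presented groups, a contradiction.

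To build $K_{\mathcal{P}}$ I would start from the presentation complex $X_{\mathcal{P}}$ (one vertex, one edge-loop per generator, one $2$-cell per relator), which after an evident triangulation is a finite $2$-dimensional simplicial complex with $\pi_1(X_{\mathcal{P}})\cong G_{\mathcal{P}}$. The obstacle to using $X_{\mathcal{P}}$ directly is that it is typically not contractible even when $G_{\mathcal{P}}$ is trivial, because $H_2(X_{\mathcal{P}})$ can be nonzero. I would kill the higher homotopy in two stages. First, attach $3$-cells along a finite, explicitly computable generating set of $\pi_2(X_{\mathcal{P}})$ as a $\mathbb{Z}[\pi_1]$-module --- such a set is furnished by the classical theory of identities among relations and can be written down from $\mathcal{P}$ alone --- producing a $3$-complex $Y_{\mathcal{P}}$ with $\pi_1(Y_{\mathcal{P}})\cong G_{\mathcal{P}}$ (cells of dimension $\geq 3$ do not affect $\pi_1$) and $\pi_2(Y_{\mathcal{P}})=0$ by construction. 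Second, when $G_{\mathcal{P}}=\{1\}$ the complex $Y_{\mathcal{P}}$ is simply connected with $\pi_2=0$, so by the Hurewicz theorem $\pi_3(Y_{\mathcal{P}})\cong H_3(Y_{\mathcal{P}})$, which is a computable finitely generated free abelian group; attaching $4$-cells along spheres realizing a basis kills $H_3$ and leaves no higher homology. Simplicial approximation then converts the resulting $4$-complex into a finite $4$-dimensional simplicial complex $K_{\mathcal{P}}$ of the same homotopy type.

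For correctness: if $G_{\mathcal{P}}=\{1\}$ then $K_{\mathcal{P}}$ is simply connected with vanishing reduced homology in every degree, hence contractible by Hurewicz together with Whitehead's theorem; conversely, if $K_{\mathcal{P}}$ is contractible then $\pi_1(K_{\mathcal{P}})=\{1\}$, and since $\pi_1(K_{\mathcal{P}})\cong G_{\mathcal{P}}$ this forces $G_{\mathcal{P}}=\{1\}$. The dimension bookkeeping is exactly $2$ (the presentation complex) $+\,1$ (killing $\pi_2$) $+\,1$ (killing $H_3$) $=4$.

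The main obstacle is not the topology --- Hurewicz and Whitehead do the heavy lifting once $\pi_1$ and the homology are under control --- but making the construction genuinely \emph{effective and total}: one must algorithmically produce the finite $\mathbb{Z}[\pi_1]$-generating set of $\pi_2(X_{\mathcal{P}})$ (via pictures or Peiffer identities), specify the $4$-cell attachments uniformly in $\mathcal{P}$ and not merely in the trivial-group case (where their realizability by sphere maps is automatic), and carry out all simplicializations without exceeding dimension $4$. These bookkeeping points are exactly what is established in \cite{tancer-complexity}.
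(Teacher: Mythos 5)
The paper does not prove this lemma; it is cited as a black box from Tancer's paper, and indeed the paper later refers to \cite[Appendix~A]{tancer-complexity} for the question of whether the dimension can be lowered to $3$. So there is no ``paper's own proof'' to compare against. Assessing your proposal on its own terms: the high-level strategy (reduce from the Adian--Rabin theorem, build a complex that is contractible iff the presented group is trivial, verify contractibility via Hurewicz and Whitehead) is the standard route and is sound as far as the topology goes. But you have a genuine gap, and it is precisely the one you flag at the end and then dismiss as ``bookkeeping.''

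The gap is effectiveness, and it is not bookkeeping; it is the entire content of the reduction. You claim that a finite $\mathbb{Z}[\pi_1]$-generating set of $\pi_2(X_{\mathcal{P}})$ ``can be written down from $\mathcal{P}$ alone.'' That is not established by the classical theory of identities among relations: the module of identities is known to be finitely generated for a finite presentation, but producing a generating set amounts to understanding which products of conjugates of relators are trivial in the free group, and deciding this---let alone parameterizing all of it---runs into the word problem for $G_{\mathcal{P}}$, the very undecidability you are invoking. The same difficulty recurs at the $4$-cell stage: even when $G_{\mathcal{P}}$ is trivial, realizing a computed basis of $H_3(Y_{\mathcal{P}})$ by explicit sphere maps requires nullhomotoping loops in $X_{\mathcal{P}}$, which again is the word problem in disguise. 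You acknowledge this and defer to \cite{tancer-complexity}, but that makes the argument circular: the lemma you are asked to prove \emph{is} Tancer's lemma, so deferring the key step to Tancer means you have not proved it. Note also that the classical Novikov--Markov style argument lands in dimension $5$ or higher; getting down to dimension $4$ is a further refinement, and your ``$2+1+1=4$'' count only works if the effective cell-attachment problem is genuinely solved in a uniform way --- which is exactly the open part of your sketch, and is why the paper cites the result rather than proving it.
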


\begin{thm}[Undecidability of the good-cover decision problem] \label{thm:undecidable}
The problem of deciding whether a 5-sparse code is locally good (or, equivalently, has a good cover) is undecidable.
\end{thm}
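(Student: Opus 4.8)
The plan is to reduce from the undecidable problem of Lemma~\ref{lem:undecidable}: given a $4$-dimensional simplicial complex $\Delta$, decide whether $\Delta$ is contractible. From $\Delta$ I will build, in a computable way, a $5$-sparse code $\mathcal{C}_\Delta$ that is locally good if and only if $\Delta$ is contractible; since being locally good is equivalent to having a good cover (Theorem~\ref{thm:iff}), this transfers the undecidability. The key design principle is that local obstructions are detected by links of faces, and by the cone construction we can force a prescribed complex to appear as such a link.

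\textbf{The construction.} Let $\Delta$ be a simplicial complex on $[n]$ with $\dim\Delta \le 4$. Introduce a new vertex $v$, and let $\Sigma := {\rm cone}_v(\Delta)$, so that ${\rm Lk}_v(\Sigma) = \Delta$ by the identity following Definition~\ref{def:cone}. Note $\dim \Sigma \le 5$, so every face of $\Sigma$ has size at most $6$; I will arrange matters so the actual code is $5$-sparse (this is the point requiring care — see below). Now define the code
\[
\mathcal{C}_\Delta ~:=~ \Sigma \setminus \{v\}~,
\]
that is, take all faces of $\Sigma$ except the single face $\{v\}$ itself. Then $\Delta(\mathcal{C}_\Delta) = \Sigma$ (removing the vertex $v$ while keeping larger faces containing $v$ does not change the generated complex), so by Proposition~\ref{prop:mandatory}, $\mathcal{C}_\Delta$ is locally good if and only if it contains all mandatory codewords of $\Sigma$, i.e. if and only if $\{v\}$ is not mandatory (all other faces of $\Sigma$ are present). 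But $\{v\}$ is mandatory exactly when ${\rm Lk}_v(\Sigma) = \Delta$ is not contractible. Hence $\mathcal{C}_\Delta$ is locally good $\iff$ $\Delta$ is contractible. Since $\Delta \mapsto \mathcal{C}_\Delta$ is computable, a decision procedure for local-goodness of $5$-sparse codes would decide contractibility of $4$-dimensional complexes, contradicting Lemma~\ref{lem:undecidable}.

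\textbf{The main obstacle.} The delicate point is the sparsity bound: $\Sigma = {\rm cone}_v(\Delta)$ has dimension up to $5$, so it contains faces of size $6$, and a code whose simplicial complex has dimension $5$ is $6$-sparse, not $5$-sparse. I expect this to be the real work of the proof. One fix: instead of coning, attach $v$ only to a subcomplex, or — cleaner — note that we need $\{v\}$ itself to witness the obstruction via its link, but we can drop the top-dimensional faces through $v$ if we instead use the \emph{mandatory-codeword} criterion more cleverly. Concretely, take $\mathcal{C}_\Delta := \big(\Sigma \setminus \{v\}\big)$ but first replace $\Delta$ by its order complex / a dimension-preserving model and check that the faces one must \emph{omit} to stay $5$-sparse are themselves non-mandatory, so omitting them (by Lemma~\ref{lem:code-cpx}-type reasoning, or directly via Proposition~\ref{prop:mandatory}) does not create or destroy the obstruction at $v$. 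Alternatively, and most simply, one shows that a $4$-dimensional $\Delta$ can be augmented by the new cone point only on faces of dimension $\le 3$, which still realizes $\Delta$ as the link of $v$ up to homotopy after a collapse — but verifying that the link is still $\Delta$ (not just homotopy equivalent) requires care. I would handle this by a direct bookkeeping argument: list exactly which faces of $\Sigma$ of size $6$ exist (they are $\tau \cup \{v\}$ for facets $\tau$ of $\Delta$ of dimension $4$), argue each such face has contractible link in $\Sigma$ (its link is a simplex boundary-complement, hence contractible, because $\tau$ is a facet), hence is non-mandatory, hence may be deleted from the code without affecting local-goodness, leaving a genuinely $5$-sparse code $\mathcal{C}_\Delta$ with $\Delta(\mathcal{C}_\Delta) \supseteq \Sigma$ minus those faces — and then re-verify ${\rm Lk}_v = \Delta$ still holds and the equivalence with contractibility of $\Delta$ persists. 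This last verification is routine but must be written out.
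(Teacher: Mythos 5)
Your core reduction is exactly the paper's: cone $\Delta$ on a new vertex $v$, set $\mathcal{C}={\rm cone}_v(\Delta)\setminus\{v\}$, note that $\{v\}$ is the only face of $\Delta(\mathcal{C})$ missing from $\mathcal{C}$ so that Proposition~\ref{prop:mandatory} reduces local-goodness of $\mathcal{C}$ to contractibility of ${\rm Lk}_v={\Delta}$, and then invoke Lemma~\ref{lem:undecidable}. That part is correct and is the whole of the paper's argument; the paper spends no effort on the sparsity count, simply asserting the resulting code is $5$-sparse.

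The sparsity discrepancy you flag is, however, real: if $\dim\Delta=4$ then ${\rm cone}_v(\Delta)$ has dimension $5$, its facets have $6$ vertices, and the code $\mathcal{C}$ is $6$-sparse. So the paper's "$5$-sparse" appears to be an off-by-one (the same slip propagates into the paragraph after Proposition~\ref{prop:3-sparse}, which says lowering the dimension in Lemma~\ref{lem:undecidable} from $4$ to $3$ would give the $4$-sparse case, when in fact it would give the $5$-sparse case). You were right to notice this.

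Where your proposal goes wrong is in the attempted repairs, none of which works. Deleting the size-$6$ codewords $\tau\cup\{v\}$ for $4$-dimensional facets $\tau$ of $\Delta$ is not justified by non-mandatoriness: these are facets of $\Delta(\mathcal{C})$, and the link of a facet is $\{\emptyset\}$, whose realization is the empty space and hence is \emph{not} contractible — your "simplex boundary-complement, hence contractible" claim is incorrect. More fundamentally, deleting a facet from the code deletes it from $\Delta(\mathcal{C})$, so ${\rm Lk}_v(\Delta(\mathcal{C}))$ becomes $\Delta$ with its $4$-dimensional facets stripped out, which need not have the same homotopy type as $\Delta$ (take $\Delta$ a single $4$-simplex: it is contractible, but removing its top face leaves $S^3$). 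Coning $v$ only over a lower skeleton of $\Delta$ breaks the reduction for the same reason. So there is no easy route to $5$-sparsity by trimming this construction; as written it delivers a $6$-sparse code, and the stated constant would require the contractibility problem for $3$-dimensional complexes to be undecidable, which the paper itself records as open.
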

\begin{proof}
Given any 4-dimensional simplicial complex $\Delta$, consider the cone over $\Delta$ on a new vertex ${v}$. This cone is itself a simplicial complex, which we denote by $\Delta'$. 
Let $\mathcal{C}$ denote the neural code $\Delta' \setminus \{ {v} \}$, which is 5-sparse. 
Note that $\Delta(\mathcal{C})=\Delta'$, so the only codeword in $\Delta(\mathcal{C})$ that is missing from $\mathcal{C}$ is ${v}$.  So, by Proposition~\ref{prop:mandatory}, the code $\mathcal{C}$ is locally good if and only if ${\rm Lk}_{{v}}(\Delta')=\Delta$ is contractible. Thus, any algorithm that could decide whether $\mathcal{C}$ is locally good would also decide whether $\Delta$ is contractible, which is impossible by Lemma~\ref{lem:undecidable}.
\end{proof}

\subsection{Decidability for 3-sparse and 4-sparse codes}

Can the condition of 5-sparsity in Theorem~\ref{thm:undecidable} can be extended to 4-sparsity or even 3-sparsity?  For 3-sparsity, the answer is ``no'':

\begin{prop} \label{prop:3-sparse}
The problem of determining whether a 3-sparse code is locally good (or, equivalently, has a good cover) is decidable.
\end{prop}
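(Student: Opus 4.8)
The plan is to show that for a 3-sparse code $\mathcal{C}$, being locally good reduces to a finite combinatorial check — one that can be carried out by direct inspection of $\Delta(\mathcal{C})$, with no undecidable contractibility test hiding inside. By Proposition~\ref{prop:mandatory}, $\mathcal{C}$ is locally good if and only if it contains all its mandatory codewords, i.e.\ $\mathcal{M}(\Delta(\mathcal{C})) \subseteq \mathcal{C}$. Since $\mathcal{C}$ and $\Delta(\mathcal{C})$ are finite objects given as input, the whole question is decidable \emph{provided} we can compute $\mathcal{M}(\Delta(\mathcal{C}))$. So the real content is: for a simplicial complex $\Delta$ of dimension at most $2$, decide for each face $\sigma \in \Delta$ whether ${\rm Lk}_\sigma(\Delta)$ is contractible. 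I would carry this out by cases on $\dim\sigma$.

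First, the easy cases. If $\sigma$ is a facet, its link is the void complex (no faces, not even $\emptyset$), which is not contractible, but such $\sigma$ is automatically in $\mathcal{C}$ (it is a maximal codeword, hence a codeword by definition of $\Delta(\mathcal{C})$), so it never obstructs. If $\dim\sigma = 2$ and $\sigma$ is not a facet — impossible, since $\dim\Delta \le 2$ forces $2$-faces to be facets. If $\dim\sigma = 1$ (an edge $e$), then ${\rm Lk}_e(\Delta)$ is a subcomplex of dimension at most $0$: a finite set of isolated vertices (those $w$ with $e \cup \{w\} \in \Delta$, i.e.\ the triangles containing $e$). A $0$-dimensional complex is contractible iff it is a single point, so the check is just ``does edge $e$ lie in exactly one triangle of $\Delta$?'' — trivially decidable. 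If $\dim\sigma = 0$ (a vertex $v$), then ${\rm Lk}_v(\Delta)$ is a graph (a $1$-dimensional simplicial complex): its vertices are the neighbors of $v$, and its edges are the edges $e$ with $e \cup \{v\} \in \Delta$ (the triangles through $v$). A finite graph is contractible iff it is connected and acyclic, i.e.\ a tree — equivalently, connected with $|V| = |E| + 1$. Both connectivity and the edge/vertex count are computable in polynomial time. Finally, $\sigma = \emptyset$: ${\rm Lk}_\emptyset(\Delta) = \Delta$ itself, and checking contractibility of a $2$-complex... here I must be slightly more careful, but a $2$-dimensional complex, while not as trivial as a graph, still has decidable contractibility — one can, for instance, test whether $\Delta$ collapses to a point, or compute $\pi_1(\Delta)$ and $H_*(\Delta)$ and invoke the Hurewicz/Whitehead theorems; for $2$-complexes, $\Delta$ is contractible iff it is simply connected and acyclic, and simple connectivity of a $2$-complex is decidable because $\pi_1$ has a finite presentation from the $2$-skeleton and... actually, the cleanest route is: $\emptyset$ is a mandatory codeword iff $\Delta$ is not contractible, but $\emptyset$ is a codeword of $\mathcal{C}$ iff $\emptyset \in \mathcal{C}$, and by Remark~\ref{rmk:empty-set} we may as well assume $\emptyset \in \mathcal{C}$, so the $\emptyset$ case never obstructs either. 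That disposes of the one genuinely delicate case.

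Assembling: $\mathcal{C}$ is locally good iff every non-facet edge and every vertex whose link is relevant passes the above tests and the resulting mandatory faces all lie in $\mathcal{C}$. Concretely, the algorithm is: (i) compute $\Delta(\mathcal{C})$; (ii) for each edge $e \in \Delta(\mathcal{C})$ not equal to a facet, check it is in exactly one triangle, else $e$ is mandatory; (iii) for each vertex $v$, check ${\rm Lk}_v(\Delta(\mathcal{C}))$ is a tree, else $v$ is mandatory; (iv) accept iff all mandatory faces found are in $\mathcal{C}$. By Remark~\ref{rmk:empty-set} we may preprocess by adding $\emptyset$ to $\mathcal{C}$, so $\emptyset$ is never an obstruction; the facet case is handled as above. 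Every step is a finite graph-theoretic computation, so the problem is decidable. I expect the only point requiring care in the write-up is making rigorous that a $0$-dimensional link is contractible iff it is a point and a $1$-dimensional link is contractible iff it is a tree (standard, but worth one sentence each), and confirming via Proposition~\ref{prop:mandatory} that mandatory codewords of a $3$-sparse code can only have size $0$, $1$, or $2$ so that these are the only links we ever need to examine — the main conceptual obstacle, namely an undecidable $\ge 2$-dimensional contractibility query, simply cannot arise when $\dim\Delta \le 2$.
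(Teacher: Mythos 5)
Your proof is correct and takes essentially the same route as the paper's: reduce via Proposition~\ref{prop:mandatory} to a contractibility check on links, observe that in a $3$-sparse code the relevant links are (at most) graphs, and use that a finite graph is contractible iff it is a tree, a decidable condition. The only bookkeeping difference is that the paper invokes the second half of Proposition~\ref{prop:mandatory} to restrict attention to \emph{nonempty} intersections of maximal codewords, so $\sigma=\emptyset$ and facets never enter the picture and every link is automatically a graph, whereas you sweep over all faces of $\Delta(\mathcal{C})$ and separately dispose of facets (always codewords) and of $\sigma=\emptyset$ via Remark~\ref{rmk:empty-set} — both detours are sound and lead to the same algorithm.
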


\begin{proof}
It is straightforward to write an algorithm that, for a given code $\mathcal{C}$, enumerates the nonempty intersections of maximal codewords.
By Proposition~\ref{prop:mandatory}, $\mathcal{C}$ is locally good if and only if the links of these intersections are all contractible.
Hence, we are interested in the decision problem for determining whether these links are contractible.  
When $\mathcal{C}$ is 3-sparse, these links are 2-sparse, i.e., (undirected) graphs.  Contractible graphs are precisely trees (connected graphs without cycles), and the problem of determining whether a graph is a tree is decidable.
\end{proof}

As for extending Theorem~\ref{thm:undecidable} to 4-sparsity, this problem is open.  This is because it is unknown whether the dimension in 
Lemma~\ref{lem:undecidable} can be lowered from 4 to 3~\cite[Appendix~A]{tancer-complexity}.

\subsection{Relation to the convexity decision problem}
For 2-sparse codes, being convex is equivalent to being locally good~\cite{sparse}. So, by Proposition~\ref{prop:3-sparse}, the convexity decision problem for these codes is decidable.

For codes without restriction on the sparsity, 
we revisit, from the proof of Theorem~\ref{thm:undecidable}, codes of the form $\mathcal{C}=\Delta' \setminus \{ {v} \}$, where $\Delta'$ is a cone over a simplicial complex $\Delta$ on a new vertex {v}.  Consider the case when $\Delta$ is contractible.  
Does it follow that $\mathcal{C}$ is convex?  
If it were, then by an argument analogous to the proof of Theorem~\ref{thm:undecidable}, the convexity decision problem would be undecidable.  However, 
we will see in the next section that 
there exist such codes $\mathcal{C}$ that are non-convex (Example~\ref{ex:good-cover-does-not-imply-loc-great}).
Indeed, the convexity decision problem is unresolved, so we pose it here.
\begin{que} \label{q:cvx-decidable}
Is the problem of determining whether a code is convex, decidable?
\end{que}

In the next section, we will introduce a superset of all convex codes, the ``locally great'' codes, and show that the corresponding decision problem is decidable (Theorem~\ref{thm:decidable}).

\section{A new, stronger local obstruction to convexity} \label{sec:new-obs}

Recall that a code $\mathcal{C}$ has no local obstructions if and only if it contains all its mandatory codewords (Proposition \ref{prop:mandatory}), and these mandatory codewords are precisely the faces of the simplicial complex $\Delta(\mathcal{C})$ whose link is non-contractible.  We prove in this section that by replacing ``non-contractible'' by ``non-collapsible'' (Definition~\ref{def:d-collapse}) we obtain a stronger type of local obstruction to convexity (Theorem~\ref{thm:new-obstruction}).

This result yields a new family of codes that, like the earlier counterexample code (Proposition~\ref{prop:counterexample-code}), are locally good, but not convex.  
This new family comprises codes of the form $\mathcal{C}= \widetilde \Delta \setminus \{v\}$, where $\widetilde \Delta$ is a cone, on a new vertex $v$, over a contractible but non-collapsible simplicial complex $\Delta$ (see Example~\ref{ex:good-cover-does-not-imply-loc-great}).  
Such a code $\mathcal{C}$ (as we saw in the proof of Theorem~\ref{thm:undecidable}) is missing only one codeword (namely, $v$) from its simplicial complex  
(because $\Delta(\mathcal{C})=\widetilde \Delta$).
We will use this several times in this section.

\subsection{Background on collapses} \label{sec:collapsibility}
First we make note of a somewhat overloaded definition in the literature. The notion of a $\emph{collapse}$ of a simplicial complex was introduced by Whitehead in 1938~\cite{whitehead}. The more general concept of $d$-\emph{collapse} was introduced by Wegner in 1975~\cite{wegner-collapse}.

\begin{defn} \label{def:d-collapse}
Let $\Delta$ be a simplicial complex, and let $\mathcal{M}$ be the set of its facets. 
\begin{enumerate}
\item For any face $\sigma$ of $\Delta$ such that there is a unique $\tau \in \mathcal{M}$ for which $\sigma \subseteq \tau$,
we define
$$
\Delta' = \Delta \setminus \{\nu \in \Delta \mid \sigma \subseteq \nu\}~,
$$ 
and say that $\Delta'$ is an \textbf{elementary $\textbf{d}$-collapse} of $\Delta$ induced by $\sigma$ (or $(\sigma,\tau)$). 
This elementary ${d}$-collapse is denoted by $\Delta \to \Delta'$. 
(Here $d$ refers to the constraint dim $\sigma \leq d - 1$, but
in this work we will let $d$ be arbitrarily high when we use the term ``$d$-collapse''). A sequence of elementary $d$-collapses 
$$
\Delta \to \Delta_1 \to \Delta_2 \to \dots \to \Delta_n
$$
is a \textbf{$\textbf{d}$-collapse} of $\Delta$ to $\Delta_n$. 
\item An  \textbf{elementary collapse} is an elementary $d$-collapse induced by a face $\sigma$
that is {\em not} a facet (i.e., $\sigma \subsetneq \tau$). A sequence of elementary collapses
starting with $\Delta$ and ending with $\Delta'$
is a \textbf{collapse} of $\Delta$ to $\Delta'$. Finally, a simplicial complex is \textbf{collapsible} if it collapses to a point (via some sequence).  
\end{enumerate}
\end{defn}

\begin{rem} \label{rmk:collaps-2-defns}
Following~\cite{tancer-complexity}, our 
Definition~\ref{def:d-collapse}(2)
characterizes ``collapsible'' in terms of
elementary collapses induced by pairs $(\sigma,\tau)$ with $\dim \sigma < \dim \tau$.
An equivalent definition, also used in the literature~\cite{bjorner,cohen},
is via elementary collapses under a stronger condition: $\dim \sigma = \dim \tau - 1$.
For completeness, we prove in Appendix~B that these two definitions of collapsible are equivalent
(Proposition~\ref{prop:2-collaps-defns-equivalent}).
\end{rem}

\begin{exa}[A $d$-collapse] \label{ex:d-collapse}
The following is an elementary $d$-collapse defined by $\sigma=\tau = 123$:
\begin{center}
	\begin{tikzpicture}[scale=.5]
        \draw [fill=gray, thick] (0,0) --(1,1) --(0,2) -- (0,0);
    	\draw (2.5,1) --(1,1);
        \draw [fill] (0,0) circle [radius=0.08];
    \draw [fill] (1,1) circle [radius=0.08];
    \draw [fill] (0,2) circle [radius=0.08];
    \draw [fill] (2.5,1) circle [radius=0.08];
        \node [left] at (0,2) {$1$};
    \node [left] at (0,0) {$2$};
    \node [below] at (1,1) {$3$};
    \node [below] at (2.5,1) {$4$};
	    \draw [->,thick] (3.6,1) -- (4.6,1);
	       \draw [fill] (6,0) circle [radius=0.08];
    \draw [fill] (7,1) circle [radius=0.08];
    \draw [fill] (6,2) circle [radius=0.08];
    \draw [fill] (8.5,1) circle [radius=0.08];
        \node [left] at (6,2) {$1$};
    \node [left] at (6,0) {$2$};
    \node [below] at (7,1) {$3$};
    \node [below] at (8.5,1) {$4$};
    	\draw (8.5,1) --(7,1);
	\draw (6,0) --(7,1);
	\draw (6,2) --(7,1);
	\draw (6,0) --(6,2);
    \end{tikzpicture}
\end{center}
\end{exa}
\begin{exa}[A collapse] \label{ex:collapse}
The following is a collapse to a point:
\begin{center}
	\begin{tikzpicture}[scale=.5]
        \draw [fill=gray, thick] (0,0) --(1,1) --(0,2) -- (0,0);
    	\draw (2.5,1) --(1,1);
        \draw [fill] (0,0) circle [radius=0.08];
    \draw [fill] (1,1) circle [radius=0.08];
    \draw [fill] (0,2) circle [radius=0.08];
    \draw [fill] (2.5,1) circle [radius=0.08];
        \node [left] at (0,2) {$1$};
    \node [left] at (0,0) {$2$};
    \node [below] at (1,1) {$3$};
    \node [below] at (2.5,1) {$4$};
	    \draw [->,thick] (4,1) -- (5,1);
	    	\draw (8.5,1) --(7,1);
	\draw (6,0) --(7,1);
        \draw [fill] (6,0) circle [radius=0.08];
    \draw [fill] (7,1) circle [radius=0.08];
    \draw [fill] (8.5,1) circle [radius=0.08];
        \node [left] at (6,0) {$2$};
    \node [below] at (7,1) {$3$};
    \node [below] at (8.5,1) {$4$};
	    \draw [->,thick] (10,1) -- (11,1);
	    	\draw (13,1)--(14.5,1);
        \draw [fill] (13,1) circle [radius=0.08];
    \draw [fill] (14.5,1) circle [radius=0.08];
        \node [below] at (13,1) {$3$};
    \node [below] at (14.5,1) {$4$};
	    \draw [->,thick] (16,1) -- (17,1);
	        \draw [fill] (18,1) circle [radius=0.08];
        \node [below] at (18,1) {$4$};
	\end{tikzpicture}
\end{center}
This collapse arises from the elementary collapses 
induced by, respectively,
$(\sigma,\tau)=(1,123)$, 
$(\sigma,\tau)=(2,23)$, and
$(\sigma,\tau)=(3,34)$.
\end{exa}

Comparing Examples~\ref{ex:d-collapse} and~\ref{ex:collapse}, note that the homotopy type was {\em not} preserved throughout the $d$-collapse, 
but was preserved through the collapse.  This is explained in the next two results, the first of which is well known (see, e.g.,~\cite[Ch.\ 1.2]{cohen} or \cite{bjorner}). 

\begin{lem} \label{lem:coll-implies-contractible} 
Elementary collapses preserve homotopy type. Thus, 
collapsible implies contractible.
\end{lem}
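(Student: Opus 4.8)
The plan is to prove the first sentence — elementary collapses preserve homotopy type — and then deduce the second sentence as an immediate consequence. For the first part, I would recall that an elementary collapse induced by a pair $(\sigma,\tau)$ (with $\sigma \subsetneq \tau$, $\tau$ the unique facet containing $\sigma$) removes from $\Delta$ exactly the faces $\nu$ with $\sigma \subseteq \nu$; call this removed set $\Upsilon$. The key observation is that $|\Delta'|$ is a deformation retract of $|\Delta|$. Concretely, the removed faces $\Upsilon$ are precisely the faces of the form $\sigma \cup \rho$ where $\rho$ ranges over faces of $\tau \setminus \sigma$ that, together with $\sigma$, span a face of $\tau$ — in fact, since $\tau$ is the unique facet containing $\sigma$ and $\tau$ itself belongs to $\Upsilon$, the union of the closed faces in $\Upsilon$ is a closed simplex $\overline{\tau}$ glued to $|\Delta'|$ along the subcomplex $\partial_\sigma := \{\eta \subseteq \tau \mid \sigma \not\subseteq \eta\}$, which is the union of the facets of $\overline{\tau}$ not containing the vertex-set $\sigma$. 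Geometrically this pair $(\overline{\tau}, \partial_\sigma)$ is homeomorphic to a ball paired with the ``outer'' part of its boundary, and such a ball deformation retracts onto that outer boundary. Pushing across this retraction (and leaving the rest of $|\Delta|$ fixed) gives a deformation retraction of $|\Delta|$ onto $|\Delta'|$, hence a homotopy equivalence.

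Next I would chain this: a general collapse is a finite sequence of elementary collapses $\Delta \to \Delta_1 \to \dots \to \Delta_m$, and composing the deformation retractions (or just using that homotopy equivalence is transitive) shows $|\Delta| \simeq |\Delta_m|$. Applying this to the case $\Delta_m$ a point yields: if $\Delta$ is collapsible then $|\Delta|$ is homotopy equivalent to a point, i.e. $\Delta$ is contractible. That gives the second sentence.

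The one subtlety I would be careful about is the precise description of the geometry of the removed simplex $\overline{\tau}$ and the subcomplex along which it is attached. The cleanest way to handle it: identify $|\tau| = \Delta^{|\tau|-1}$ (a standard simplex), let $S$ denote the closed face spanned by the vertices of $\sigma$, and note that $\Upsilon = \{\nu \in \overline{\tau} \mid \sigma \subseteq \nu\}$ corresponds to the open star of the barycenter-region of $S$; the complementary subcomplex $\{\eta \in \overline{\tau} \mid \sigma \not\subseteq \eta\}$ is exactly where $\overline{\tau}$ meets $\Delta'$ (using that $\tau$ was the \emph{unique} facet above $\sigma$, so no face of $\Delta'$ contains $\sigma$). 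Then the elementary fact that $(\Delta^k, \text{union of some but not all of its facets})$ deformation retracts onto that proper union of facets finishes it. This elementary-topology step — correctly naming the attaching subcomplex and invoking the ball-retracts-to-part-of-its-boundary fact — is the only place requiring care; everything else is bookkeeping, and since the excerpt explicitly flags Lemma~\ref{lem:coll-implies-contractible} as ``well known,'' citing~\cite{cohen} or~\cite{bjorner} for the standard statement and only sketching the deformation-retract picture would be entirely appropriate.
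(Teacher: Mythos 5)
The paper does not actually prove Lemma~\ref{lem:coll-implies-contractible}; it cites it as classical, referring to~\cite[Ch.~1.2]{cohen} and~\cite{bjorner}, so there is no internal proof to compare against. Your sketch is the correct textbook argument underlying those citations, and you correctly identify the two points requiring care: that $|\Delta|$ is obtained from $|\Delta'|$ by gluing the closed simplex $\overline{\tau}$ along the generalized horn $\partial_\sigma=\{\eta\subseteq\tau\mid\sigma\not\subseteq\eta\}=\partial\overline{\sigma}*\overline{\tau\setminus\sigma}$ (which uses uniqueness of the facet $\tau$ above $\sigma$ to see that no face of $\Delta'$ outside $\overline{\tau}$ meets the removed cells), and that $\overline{\tau}$ strong deformation retracts onto $\partial_\sigma$. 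For the latter, one should take care in choosing the retraction: naive radial projection from a point of $\mathrm{int}(\sigma)$ is discontinuous at that point. A clean explicit choice in barycentric coordinates $(x_v)_{v\in\tau}$ on $\overline{\tau}$: with $m(x)=\min_{v\in\sigma}x_v$, set $r(x)_v=x_v-m(x)$ for $v\in\sigma$ and $r(x)_v=x_v+\tfrac{|\sigma|\,m(x)}{|\tau\setminus\sigma|}$ for $v\in\tau\setminus\sigma$; the straight-line homotopy from the identity to $r$ stays in $\overline{\tau}$ by convexity, fixes $|\partial_\sigma|$ pointwise, and hence pastes with the identity on $|\Delta'|$ to give the desired strong deformation retraction. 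One further remark: the paper's Definition~\ref{def:d-collapse}(2) allows $\dim\sigma<\dim\tau-1$, and your argument handles that general case directly, so you do not need to first invoke the equivalence with codimension-one collapses proved in Proposition~\ref{prop:2-collaps-defns-equivalent}.
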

However, not every contractible simplicial complex is collapsible. 
One example is any triangulation of the 2-dimensional topological space known as Bing's house with two rooms~\cite[Ex.\ 1.6.1]{rushing}.
Another example is the dunce hat~\cite{dunce}.

Lemma~\ref{lem:coll-implies-contractible} extends as follows:
\begin{lem} \label{lem:d-collapse}
Let $\Delta$ be a contractible simplicial complex, and let $\Delta \to \Delta'$ be an elementary $d$-collapse induced by a face $\sigma$.  
Assume that $\Delta'$ contains a nonempty face.
Then $\Delta'$ is contractible if and only if $\sigma$ is {\em not} a facet (i.e., $\Delta \to \Delta'$ is an elementary collapse).
\end{lem}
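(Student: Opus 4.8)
The plan is to prove the two directions separately, exploiting that ``$\sigma$ is a facet'' and ``$\sigma$ is not a facet'' are complementary. For the backward direction, suppose $\sigma$ is not a facet. Then, by Definition~\ref{def:d-collapse}(2), the map $\Delta \to \Delta'$ is an \emph{elementary collapse}, and Lemma~\ref{lem:coll-implies-contractible} says elementary collapses preserve homotopy type; hence $\Delta' \simeq \Delta$, which is contractible. This direction is immediate and requires no further work.

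For the forward direction I would argue the contrapositive: if $\sigma$ \emph{is} a facet, I must show $\Delta'$ is not contractible. Since a facet is the only face of $\Delta$ that contains itself, in this case $\{\nu \in \Delta \mid \sigma \subseteq \nu\} = \{\sigma\}$, so $\Delta' = \Delta \setminus \{\sigma\}$. Set $k := \dim \sigma$. The key geometric observation is that $|\Delta|$ is obtained from $|\Delta'|$ by attaching a single $k$-cell (the closed simplex $\overline\sigma$) along its boundary sphere $\partial\overline\sigma$, which already lies in $|\Delta'|$ because every \emph{proper} subface of $\sigma$ survives the collapse. Thus $(|\Delta|, |\Delta'|)$ is a CW pair with $|\Delta|/|\Delta'| \cong S^k$, so $H_n(\Delta,\Delta') \cong \widetilde H_n(S^k)$, which is $\Z$ for $n=k$ and $0$ otherwise. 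Plugging this into the long exact sequence of the pair, together with $\widetilde H_\ast(\Delta) = 0$ (contractibility of $\Delta$), gives $\widetilde H_{k-1}(\Delta') \cong H_k(\Delta,\Delta') \cong \Z \neq 0$; since a contractible space has vanishing reduced homology in every degree, $\Delta'$ is not contractible.

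One degenerate case must be handled before running that computation: I need $k \geq 1$ so that $\widetilde H_{k-1}$ sits in a nonnegative degree. If instead $\sigma$ were an isolated vertex (a $0$-dimensional facet), then connectedness of the contractible complex $\Delta$ would force $\Delta = \{\emptyset,\sigma\}$, whence $\Delta' = \{\emptyset\}$ has no nonempty face, contradicting the standing hypothesis. So $k \geq 1$ automatically. This is precisely where the hypothesis ``$\Delta'$ contains a nonempty face'' is used.

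I expect the only step needing genuine care is the identification $|\Delta|/|\Delta'| \cong S^k$ with attaching map into $|\Delta'|$ — that is, the claim that deleting a single facet is the same operation as coning off a sphere. It is routine once one records that $\partial\overline\sigma \subseteq |\Delta'|$ and that no face of $\Delta$ other than $\sigma$ itself is removed, but it is the combinatorial heart of the argument; everything after it is a bookkeeping application of the long exact sequence of the pair.
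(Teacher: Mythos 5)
Your proof is correct, and it follows the same overall strategy as the paper's (backward direction immediate from elementary collapses preserving homotopy type; forward direction via the contrapositive, showing a facet deletion creates nontrivial reduced homology). The one place you diverge is in the homological mechanism for the contrapositive: the paper applies Mayer--Vietoris to the decomposition $|\Delta| = [\sigma] \cup |\Delta'|$, using that the intersection $[\sigma] \cap |\Delta'| = \partial\overline\sigma$ is a sphere, while you instead run the long exact sequence of the pair $(|\Delta|, |\Delta'|)$ after observing that $|\Delta|/|\Delta'| \cong S^k$. These are essentially dual bookkeeping devices for the same geometric fact (one facet is a $k$-cell attached along its boundary sphere), so neither buys a real simplification; your version is perhaps marginally more explicit in that it names exactly which reduced homology group of $\Delta'$ is forced to be $\mathbb Z$. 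You are also more explicit than the paper about the degenerate case $\dim\sigma = 0$, correctly locating it as the place where the hypothesis ``$\Delta'$ contains a nonempty face'' is used; the paper leaves that implicit (the Mayer--Vietoris argument would also need $\partial\overline\sigma \neq \emptyset$). Both treatments are sound.
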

\begin{proof} 
The backward direction is immediate from Lemma~\ref{lem:coll-implies-contractible}.
We prove the contrapositive of the forward direction.  Suppose that $\sigma$ is a facet of $\Delta$. Then $|\Delta| = [\sigma] \cup |\Delta'|$, where
$[\sigma]$ denotes the the topological realization of $\sigma$ as a facet of $|\Delta|$.
With an eye toward using the Mayer-Vietoris sequence,
we note that the intersection $[\sigma] \cap |\Delta'|$ is the boundary of the simplex $[\sigma]$ and thus is a sphere and so has a non-vanishing homology group.
Applying the Mayer-Vietoris sequence for homology to $|\Delta| = [\sigma] \cup |\Delta'|$, and using the hypothesis that $\Delta$ is contractible, we conclude that $|\Delta'|$ has a non-vanishing homology group, and therefore is not contractible. 
\end{proof}

Finally, we state the following result of Wegner~\cite{wegner-collapse}; see also Tancer's description in~\cite[\S2]{tancer-survey}.  

\begin{lem}[Wegner~\cite{wegner-collapse}] \label{lem:sweep}
Let $\mathcal{W}=\{ W_1, W_2, \dots, W_n\}$ be a collection of nonempty convex (not necessarily open) sets in $\mathbb{R}^m$. 
Let $\Lambda$ be the nerve of $\mathcal{W}$.
Then there exists an open halfspace $H$ in $\mathbb{R}^m$ such that, 
letting $\Lambda'$ denote the nerve of the collection $\{W_1 \cap H, W_2 \cap H, \dots, W_n \cap H\}$,
the following is an elementary $d$-collapse:
$\Lambda \to \Lambda'$. \end{lem}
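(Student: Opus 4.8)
The plan is to produce $H$ by a \emph{sweeping-hyperplane} argument: fix a generic unit vector $a \in \mathbb{R}^m$, slide the level hyperplane of $\langle a, \cdot\rangle$ downward until the nerve is on the verge of changing, and let $H$ be the resulting open halfspace. The combinatorial content will be that the faces which first ``die'' under such a cut form exactly a principal up-set generated by a single face lying in a unique facet --- which is precisely the data of an elementary $d$-collapse.

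Concretely, for each nonempty $\nu \in \Lambda$ put $b_\nu := \inf_{x \in W_\nu}\langle a, x\rangle \in [-\infty, +\infty)$; this is finite as soon as some $W_i \neq \mathbb{R}^m$, by choosing $a$ from a supporting halfspace of $\overline{W_i}$. For an open halfspace $H = \{x : \langle a, x\rangle < c\}$ one has $W_\nu \cap H = \emptyset$ iff $b_\nu \geq c$, and since $\nu \subseteq \nu'$ forces $W_{\nu'} \subseteq W_\nu$ and hence $b_{\nu'} \geq b_\nu$, the set of surviving faces is automatically downward closed; thus $\Lambda'$ (the nerve of $\{W_i \cap H\}$) is a subcomplex of $\Lambda$, and in fact $\Lambda' = \Lambda \setminus \{\nu : b_\nu \geq c\}$. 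Now set $c := b_{\max} := \max\{b_\nu : \emptyset \neq \nu \in \Lambda\}$ (a maximum over finitely many faces, hence attained, and finite in the nondegenerate case) and $H := \{x : \langle a, x\rangle < b_{\max}\}$. Then the set of dying faces is $K := \{\nu \in \Lambda : b_\nu = b_{\max}\}$, which, being of the form $\{\nu : b_\nu \geq c\}$, is an up-set in the face poset of $\Lambda$, and $\Lambda' = \Lambda \setminus K$.

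It remains to choose $a$ in general position so that: (i) $K$ has a \emph{unique} inclusion-minimal face $\sigma_0$, whence $K = \{\nu \in \Lambda : \sigma_0 \subseteq \nu\}$; and (ii) $\sigma_0$ is contained in a unique facet $\tau$ of $\Lambda$. Granting (i) and (ii), $\Lambda' = \Lambda \setminus \{\nu : \sigma_0 \subseteq \nu\}$ is exactly the elementary $d$-collapse of $\Lambda$ induced by $(\sigma_0, \tau)$, which proves the lemma. To obtain (i) and (ii) I would argue that the ``bad'' directions --- those for which two incomparable faces simultaneously attain $b_{\max}$, or for which the minimal face attaining $b_{\max}$ sits in two facets --- lie in a finite union of proper subvarieties of the unit sphere: whenever such a coincidence occurs, convexity of the relevant $W_\nu$ supplies supporting hyperplanes at their lowest points in direction $a$, and $a$ is then pinned down by a nontrivial relation among those hyperplanes, so a generic $a$ avoids all of them. (Two harmless technicalities, absorbed into the same genericity: the $W_\nu$ need not be closed, so the infima $b_\nu$ may be unattained --- irrelevant, since we only use the criterion $W_\nu \cap H = \emptyset \iff b_\nu \geq c$; and unbounded $W_\nu$, with $b_\nu = -\infty$, simply never lie in $K$.)

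The main obstacle I anticipate is precisely step (i)--(ii): verifying that a generic sweep direction makes the first dying up-set principal \emph{and} its generator facet-unique. This is the one place where convexity of the $W_i$ is genuinely used, and where one must dispose of the degenerate configurations (e.g.\ repeated or nested sets that force permanent ties among the $b_\nu$, for which no valid cut exists and which must therefore be excluded by the general-position hypothesis). Everything else is bookkeeping with the emptiness criterion together with the monotonicity $b_\nu \leq b_{\nu'}$ for $\nu \supseteq \nu'$.
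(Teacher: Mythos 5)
The paper does not prove this lemma: it is cited directly from Wegner's 1975 paper, with a two-sentence informal description (``sweeping a hyperplane from infinity across $\mathbb{R}^m$\dots'') and a pointer to Tancer's survey. Your sketch is a formalization of exactly that sweep, which is the right picture, and you are honest that steps (i)--(ii) are the ``main obstacle.'' Unfortunately, that obstacle is a genuine gap, not just bookkeeping, and the specific mechanism you propose for overcoming it --- genericity of the sweep direction $a$ --- cannot work.

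The problem is that the set of \emph{good} directions can have measure zero, not the set of bad ones. Take $W_1 = \{(x,y) : y > 0\}$ and $W_2 = \{(x,y) : y < 0\}$ in $\mathbb{R}^2$, so $\Lambda$ is two disjoint vertices. For any $a = (a_1, a_2)$ with $a_1 \neq 0$, both $W_1$ and $W_2$ are unbounded in the direction $-a$, hence $b_1(a) = b_2(a) = -\infty$, the quantity $b_{\max}$ is $-\infty$, and no open halfspace $H$ in direction $a$ ever makes either $W_i \cap H$ empty --- the sweep never catches anything, and $\Lambda' = \Lambda$ with no collapse. The only directions that work are $a = (0,\pm 1)$, a measure-zero subset of the circle. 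So ``pick $a$ generic and declare the bad locus to be a finite union of proper subvarieties'' is the wrong move: unboundedness can make the complement of the bad locus negligible. Relatedly, your remark that permanent-tie configurations ``must therefore be excluded by the general-position hypothesis'' is not available to you --- the lemma as stated has no general-position hypothesis on the $W_i$, so a correct proof must handle those configurations rather than assume them away. (There is also the genuinely degenerate case $W_i = W_j$, where the tie among $b_{\{i\}}, b_{\{j\}}$ holds for \emph{every} direction; there the collapse is the trivial one to a simplex-minus-everything, which your framework, which looks for a nonempty minimal generator $\sigma_0$ of the dying up-set, does not produce.)

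The fix in Wegner's argument chooses the sweep direction \emph{from the geometry}, not generically on the sphere: if $\Lambda$ is not already a simplex, pick two facets $\tau_1 \neq \tau_2$; the corresponding intersection regions $W_{\tau_1}$ and $W_{\tau_2}$ are disjoint nonempty convex sets, so a separating hyperplane exists, and its normal is a direction along which at least one of the two regions is entirely bounded on one side, so the sweep genuinely terminates by excising some facet region. Breaking the remaining ties to get a unique minimal $\sigma_0$ then requires a perturbation within that restricted family of separating directions (together with the ``$\Lambda$ is a simplex'' base case, where the correct $d$-collapse is the one induced by $\sigma = \emptyset$ under the paper's conventions). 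The monotonicity of $b_\nu$ and the up-set structure of the dying faces in your write-up are correct and are exactly what one reuses once the direction is chosen properly; it is only the source of the direction that needs to change.
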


Informally speaking, Wegner proved Lemma~\ref{lem:sweep} by 
sweeping a hyperplane from infinity across $\mathbb{R}^m$, deleting everything in its path,
until an intersection region corresponding to a facet $\tau$ of $\Lambda$ has been removed. 
This yields the elementary $d$-collapse $\Lambda \to \Lambda'$ induced by some face contained in $\tau$.

\begin{rem} \label{rmk:collapse-polar}
Recent work of Itskov, Kunin, and Rosen is similar in spirit to ours: 
they show that the collapsibility of a certain simplicial complex associated to a code (the ``polar complex'')
ensures that the code avoids certain obstructions to being a certain type of convex code (namely, a code arising from a ``non-degenerate'' hyperplane arrangement)~\cite[\S 6.5]{IKR}.
\end{rem}

\subsection{A key lemma} \label{sec:key-lemma}
This subsection contains the key result (Lemma~\ref{lem:key}) 
that allows us to 
establish, via Theorem~\ref{thm:new-obstruction}, 
our new local obstruction.
Lemma~\ref{lem:key}
states that for an open cover by convex sets of a set that is itself convex, the corresponding nerve is collapsible.
The original local obstruction (Proposition~\ref{prop:locally-good})
relied on a weaker version of this result, which states only that such a nerve is contractible.
Accordingly, 
the way we use Lemma~\ref{lem:key} to prove
Theorem~\ref{thm:new-obstruction} 
is analogous to how the authors of~\cite{no-go,what-makes} used 
the ``contractible'' version of the lemma to establish the original notion of local obstruction.

\begin{lem} \label{lem:key}
Let $\mathcal{W}=\{ W_1, W_2, \dots, W_n\}$ be a collection of convex open sets in $\mathbb{R}^m$ such that 
their union $W_1 \cup W_2 \cup \dots \cup W_n$ is nonempty and convex.
Then the nerve of $\mathcal{W}$ is collapsible.
\end{lem}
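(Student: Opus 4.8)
The plan is to prove Lemma~\ref{lem:key} by induction on the number $n$ of sets in $\mathcal{W}$, using Wegner's sweeping result (Lemma~\ref{lem:sweep}) to peel off one elementary $d$-collapse at a time while keeping the hypotheses of the lemma intact. The base case $n=1$ is trivial: the nerve of a single nonempty set is a point, which is collapsible. For the inductive step, the idea is to apply Lemma~\ref{lem:sweep} to the convex \emph{closed} sets $\overline{W_i}$ (or, more carefully, to work with the $W_i$ themselves, since Lemma~\ref{lem:sweep} allows non-open convex sets) and sweep in a halfspace $H$ that deletes exactly one intersection region corresponding to a facet $\tau$ of the nerve $\Lambda = \mathcal{N}(\mathcal{W})$. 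Wegner's lemma gives an elementary $d$-collapse $\Lambda \to \Lambda'$, where $\Lambda'$ is the nerve of $\{W_i \cap H\}$.

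The subtlety is that after one sweep we no longer have the same collection, so I cannot directly recurse on $\{W_i \cap H\}$ — the intersections with $H$ are still convex and open, but some of them may now be empty, and more importantly their union $\bigcup_i (W_i \cap H) = H \cap \bigcup_i W_i$ is convex (intersection of two convex sets) but we must also ensure it is nonempty, which we can arrange by stopping the sweep just before the union is exhausted. So the cleaner approach is: repeatedly apply Lemma~\ref{lem:sweep}, each time reducing the number of facets of the current nerve, until the nerve has been $d$-collapsed down to a single vertex (or a simplex). Concretely, I would set up the induction on the \emph{number of faces} of $\Lambda$, or equivalently argue that each sweep strictly decreases $|\Lambda|$ while the running hypothesis ``the sets are convex open and their union is nonempty convex'' is preserved, so after finitely many sweeps we reach a nerve that is a full simplex $2^{[k]}$ (which collapses to a point) — this happens once all remaining sets have a common point. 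Chaining these elementary $d$-collapses together yields a $d$-collapse of $\Lambda$ to a point, which is exactly collapsibility (recall that in this paper $d$ is allowed to be arbitrarily large, per Definition~\ref{def:d-collapse}).

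The key steps in order: (1) if $\bigcup W_i$ is convex and nonempty, either all the $W_i$ share a common point — in which case $\Lambda$ is a full simplex and we are done — or the nerve has at least two facets and we invoke Lemma~\ref{lem:sweep}; (2) choose the halfspace $H$ from Lemma~\ref{lem:sweep} so that it removes one facet region, producing the elementary $d$-collapse $\Lambda \to \Lambda'$; (3) verify that $\{W_i \cap H\}$ is again a collection of convex open sets whose union $H \cap \bigcup_i W_i$ is convex, and, by stopping the sweeping hyperplane appropriately, nonempty; (4) observe $\mathcal{N}(\{W_i \cap H\}) = \Lambda'$ has strictly fewer faces (or fewer facets) than $\Lambda$; (5) apply the inductive hypothesis to conclude $\Lambda'$ is collapsible, and prepend the elementary $d$-collapse $\Lambda \to \Lambda'$ to get that $\Lambda$ is collapsible.

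The main obstacle I anticipate is the bookkeeping in step (3): ensuring that the new union stays nonempty (so that the inductive hypothesis applies and so that we never collapse to the void complex) while still guaranteeing the nerve genuinely shrinks. Wegner's lemma as stated hands us an $H$ that kills one facet, but we need to make sure we can always pick such an $H$ with $H \cap \bigcup_i W_i \neq \emptyset$; this should follow because the facet region being removed is, by convexity, not all of $\bigcup_i W_i$ unless $\Lambda$ is already a simplex, the case handled separately in step (1). A secondary point to get right is that $\overline{W_i} \cap H$ versus $W_i \cap H$ — since Lemma~\ref{lem:sweep} is phrased for not-necessarily-open convex sets, I will apply it directly to the (convex, open) $W_i$, so the nerve identity $\mathcal{N}(\{W_i \cap H\}) = \Lambda'$ holds on the nose and no closure juggling is needed.
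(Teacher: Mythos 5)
Your overall strategy — induct via Wegner's sweeping lemma — matches the paper's, but there is a real gap at the very end, and it is the crux of the lemma. You write that ``chaining these elementary $d$-collapses together yields a $d$-collapse of $\Lambda$ to a point, which is exactly collapsibility (recall that in this paper $d$ is allowed to be arbitrarily large).'' This conflates two distinct notions from Definition~\ref{def:d-collapse}. Taking $d$ arbitrarily large only relaxes the constraint $\dim\sigma \leq d-1$; it does \emph{not} remove the requirement in Definition~\ref{def:d-collapse}(2) that an elementary \emph{collapse} be induced by a pair $(\sigma,\tau)$ with $\sigma \subsetneq \tau$, i.e., $\sigma$ must not itself be a facet. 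An elementary $d$-collapse that deletes an entire facet (the case $\sigma = \tau$) is permitted in a $d$-collapse but is \emph{not} an elementary collapse, and in fact changes the homotopy type (Example~\ref{ex:d-collapse} and Lemma~\ref{lem:d-collapse}). So a chain of elementary $d$-collapses ending at a point does not, by itself, certify collapsibility.

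Wegner's Lemma~\ref{lem:sweep} only hands you an elementary $d$-collapse $\Lambda \to \Lambda'$; it does not promise that $\sigma \subsetneq \tau$. The missing step — which the paper supplies — is to show that this $d$-collapse is genuinely an elementary collapse. The paper does so by applying the nerve theorem (Proposition~\ref{prop:nerve-thm}) to $\{W_i \cap H\}$: since $\bigl(\bigcup_i W_i\bigr) \cap H$ is nonempty and convex, hence contractible, the nerve $\Lambda'$ is contractible. Then, because $\Lambda$ is also contractible (it is the nerve of a good cover of a convex set, or one can take $\Lambda$ contractible inductively), Lemma~\ref{lem:d-collapse} forces the pair $(\sigma,\tau)$ defining $\Lambda \to \Lambda'$ to satisfy $\sigma \subsetneq \tau$, i.e., the step is a legal elementary collapse. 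Without this argument your chain of $d$-collapses might include a facet deletion, at which point the complex would cease to be contractible and the resulting sequence would certify only $d$-collapsibility to a point, not collapsibility. Everything else in your outline (base case, preservation of the hypotheses under intersection with $H$, nonemptiness of the new union, strict decrease of the face count) aligns with the paper's proof.
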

\begin{proof}
Let $\Lambda$ denote the nerve of $\mathcal{W}$.  
Let $p$ denote the number of nonempty faces of $\Lambda$.  Note that $p \geq 1$, as the union of the $W_i$'s is nonempty.
We proceed by induction on $p$.

Base case: $p=1$. Then $\Lambda$ is a point, and thus is collapsible.

Inductive step: $p\geq 2$.  
Assume that the lemma is true for all nerves $\Lambda$ with at most $p-1$ nonempty faces.  
First consider the case when $\Lambda$ has only one facet.  Then $\Lambda$ is a simplex, and every simplex is collapsible.

Now consider the remaining case, when $\Lambda$ has at least two facets. 
Without loss of generality, each $W_i$ is nonempty (deleting $W_i$'s that are empty does not affect the union or the nerve).  So, by Lemma~\ref{lem:sweep}, there exists an open halfspace $H$ such that $\Lambda \to \Lambda'$ is an elementary $d$-collapse, where $\Lambda'$ is the nerve of $\mathcal{V}:= \{  W_1 \cap H, W_2 \cap H, \dots, W_n \cap H  \}$.  We see that $\mathcal{V}$ is a collection of convex open sets whose union, $(\cup_{i \in [n]} W_i) \cap H$, is convex (because both $\cup_{i \in [n]} W_i$ and $H$ are convex)
and nonempty (indeed, the nerve $\Lambda'$ is nonempty, because $\Lambda$ has at least two facets and so at least one was unaffected by the elementary $d$-collapse $\Lambda \to \Lambda'$).  Thus, by the induction hypothesis, $\Lambda'$ is collapsible.

Thus, by definition of collapsible, we need only show that the elementary $d$-collapse $\Lambda \to \Lambda'$ was in fact an elementary collapse.  
To see this, note that the nerve theorem (Proposition~\ref{prop:nerve-thm}) implies that the nerve $\Lambda'$ is homotopy equivalent to $(\cup_{i \in [n]} W_i) \cap H$, which we saw above is convex (and nonempty) and thus contractible.  
Hence, by Lemma~\ref{lem:d-collapse}, $\Lambda \to \Lambda'$ is an elementary collapse.  This completes the proof.
\end{proof}

\subsection{Locally great codes} \label{sec:new-obstruction-subsection}

The following result gives a new class of local obstructions.

\bthm \label{thm:new-obstruction} 
Let $\mathcal{C}$ be a convex code.
Then for any  $\sigma \in \Delta(\mathcal{C}) \setminus \mathcal{C}$, the link $\Lk_\sigma(\Delta(\mathcal{C}))$ is collapsible.
\ethm

\begin{proof}
Let $U_1, U_2, \dots, U_n$ be convex open sets 
(in some stimulus space $X \subseteq \mathbb{R}^m$) that realize $\mathcal{C}$. 
Let $\sigma \in \Delta(\mathcal{C})\setminus \mathcal{C}$.  
Then, by definition,  
$\emptyset \neq U_{\sigma} \subseteq \cup_{i \notin \sigma} U_i$.  
Thus, 
$\mathcal{W} := \{ U_i \cap U_{\sigma} \}_{i \notin \sigma}$
is a collection of convex open sets such that 
their union equals $U_{\sigma}$ 
(and thus this union is nonempty and convex).
So, by Lemma~\ref{lem:key}, the nerve of $\mathcal{W}$ is collapsible.  It is straightforward to check 
that this nerve equals $\Lk_\sigma(\Delta(\mathcal{C}))$.  
So, as desired, the link is collapsible.
\end{proof}

\begin{defn} \label{def:locally-great}
A code $\mathcal{C}$ has a 
\textbf{local obstruction of the second kind}
if there exists $\sigma \in \Delta(\mathcal{C}) \sm \mathcal{C}$
such that the link  $\Lk_\sigma(\Delta(\mathcal{C}))$ is {\em not} collapsible.
If $\mathcal{C}$ has {\em no} local obstructions of the second kind, 
$\mathcal{C}$ is \textbf{locally great}.
\end{defn}

The next result follows from Theorem~\ref{thm:new-obstruction} and the fact that collapsible implies contractible (Lemma~\ref{lem:coll-implies-contractible}). 
\begin{cor} \label{cor:new-implications} 
$\mathcal{C}$ is convex 
$\quad \Rightarrow \quad$
$\mathcal{C}$ is locally great 
$\quad \Rightarrow \quad$
$\mathcal{C}$ is locally good. \end{cor}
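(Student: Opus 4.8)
The plan is to obtain both implications by unwinding the relevant definitions and invoking the two results just cited, with no serious work required. For the first arrow, suppose $\mathcal{C}$ is convex. Theorem~\ref{thm:new-obstruction} then says that for every $\sigma \in \Delta(\mathcal{C}) \setminus \mathcal{C}$ the link $\Lk_\sigma(\Delta(\mathcal{C}))$ is collapsible. By Definition~\ref{def:locally-great}, this is exactly the statement that $\mathcal{C}$ has no local obstruction of the second kind, i.e., that $\mathcal{C}$ is locally great. So the first implication is nothing more than a restatement of Theorem~\ref{thm:new-obstruction}.

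For the second arrow, I would first rephrase ``locally good'' in terms of links via Proposition~\ref{prop:mandatory}: a code $\mathcal{C}$ is locally good precisely when $\mathcal{M}(\Delta(\mathcal{C})) \subseteq \mathcal{C}$, and, taking the contrapositive of the definition of a mandatory codeword, this says that every $\sigma \in \Delta(\mathcal{C}) \setminus \mathcal{C}$ has contractible link $\Lk_\sigma(\Delta(\mathcal{C}))$. Now assume $\mathcal{C}$ is locally great. Then for every such $\sigma$ the link $\Lk_\sigma(\Delta(\mathcal{C}))$ is collapsible, hence contractible by Lemma~\ref{lem:coll-implies-contractible}. Therefore no $\sigma \in \Delta(\mathcal{C}) \setminus \mathcal{C}$ is mandatory, so $\mathcal{M}(\Delta(\mathcal{C})) \subseteq \mathcal{C}$, and $\mathcal{C}$ is locally good.

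There is essentially no obstacle: the corollary is a formal consequence of Theorem~\ref{thm:new-obstruction} together with the implication ``collapsible $\Rightarrow$ contractible'' (Lemma~\ref{lem:coll-implies-contractible}), and Proposition~\ref{prop:mandatory} is used only to put ``locally good'' into the link-based form that makes the comparison with ``locally great'' transparent. The only point that warrants a moment's care is verifying that this contrapositive translation of Proposition~\ref{prop:mandatory} is indeed the characterization of ``locally good'' one wants, but that is immediate from the definition of $\mathcal{M}(\Delta)$.
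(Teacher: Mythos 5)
Your proposal is correct and follows the same route the paper takes: the paper also derives this corollary directly from Theorem~\ref{thm:new-obstruction} together with Lemma~\ref{lem:coll-implies-contractible}, and your use of Proposition~\ref{prop:mandatory} to recast ``locally good'' in link form is exactly the implicit step the paper is relying on. You have simply written out the details the paper leaves to the reader.
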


Neither implication in Corollary~\ref{cor:new-implications} is an equivalence, as we see in the following examples.

\begin{exa}[Locally great does {\em not} imply convex] \label{ex:loc-great-does-not-imply-cvx}
The counterexample code from Proposition~\ref{prop:counterexample-code}
is non-convex~\cite{LSW}, but, we claim,
locally great.
To verify this, we must check that for every missing codeword
$\sigma \in \Delta(\mathcal{C}) \setminus \mathcal{C} = \{234,235, 245, 345, 12, 15, 24, 25, 35, 1, 2, 5 \}$,
the corresponding link ${\rm Lk}_{\sigma}(\Delta(\mathcal{C}))$ is collapsible.  We accomplish this as follows:
\begin{itemize}
\item When 
$\sigma \in \{234,235, 245, 345, 12, 15 \}$,
the link is a point and thus collapsible.
\item When 
$\sigma \in \{ 24, 25, 35, 1 \}$,
the link is a single edge and thus collapsible (cf.\ Example~\ref{ex:collapse}). 
\item When 
$\sigma \in \{2, 5 \}$,
the link is, up to relabeling, the simplicial complex in Example~\ref{ex:collapse}, which we showed is collapsible.
\end{itemize}

\end{exa}

\begin{exa}[Locally good does {\em not} imply locally great] \label{ex:good-cover-does-not-imply-loc-great}
Let $\Delta$ be a triangulation of Bing's house, 
so $\Delta$ is a 2-dimensional simplicial complex that is contractible but not collapsible~\cite[Ex.\ 1.6.1]{rushing}.
Let $\widetilde \Delta$ be a cone over $\Delta$ on a new vertex $v$, and consider the code
$\mathcal{C}:= \widetilde \Delta \setminus \{v\}$.

We claim that $\mathcal{C}$ is locally good,
but not locally great.
Indeed, the only ``missing'' 
codeword of $\mathcal{C}$ is $v$, and its link is ${\rm Lk}_v(\Delta(\mathcal{C})) = \Delta$, which is contractible but not collapsible.   Thus, $\mathcal{C}$ is locally good (by Proposition~\ref{prop:mandatory}), but not locally great (by definition).
\end{exa}

We now consider the question of whether there might be an even stronger local obstruction beyond ``locally great'' (perhaps ``locally excellent''?).  That is, can the conclusion of Lemma~\ref{lem:key}
be strengthened from ``collapsible'' to some more general property of simplicial complexes?  Or, on the contrary, does the converse of Lemma~\ref{lem:key} hold?  Accordingly, we pose the following question:
\begin{que}\label{q:collaps-nerve}
Is every collapsible simplicial complex the nerve of a convex open cover of some \underline{convex} set?
\end{que}
\noindent  
After our manuscript appeared on arXiv, Question~\ref{q:collaps-nerve} was resolved in the negative
 by Jeffs and Novik~\cite{jeffs2018}, who also proved further results about neural codes that can be realized by convex open sets whose union is convex.

\begin{rem} \label{rmk:decision}
Some decision problems related to Question~\ref{q:collaps-nerve} have been resolved.
The problem of deciding whether a simplicial complex is the nerve of a convex open cover of some (not necessarily convex) subset of $\mathbb{R}^d$ is decidable~\cite{wegner,tancer-survey}. 
On the other hand, the 
problem of deciding whether a simplicial complex is the nerve of a good cover of some subset of $\mathbb{R}^d$, when $d \geq 5$, is undecidable~\cite{t-t}.
\end{rem}

\subsection{The locally-great decision problem}

In this subsection, we prove that the locally-great decision problem is decidable.  This result relies on the following lemma.

\begin{lem}[Tancer~\cite{tancer-complexity}] \label{lem:decidable}
The problem of deciding whether a given simplicial complex is collapsible is NP-complete.
\end{lem}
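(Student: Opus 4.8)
The plan is to prove the two halves of NP-completeness separately, membership in NP and NP-hardness. Since the statement is due to Tancer~\cite{tancer-complexity}, what follows is only a sketch of the ideas; in the body of this section we invoke the cited result directly.

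\emph{Membership in NP.} The natural certificate for a ``yes'' instance is a collapsing sequence itself: a list of inducing pairs $(\sigma_1,\tau_1),(\sigma_2,\tau_2),\dots,(\sigma_r,\tau_r)$ witnessing elementary collapses $\Delta \to \Delta_1 \to \dots \to \Delta_r$ with $\Delta_r$ a single vertex. First I would note that such a sequence has length $r$ linear in the number of faces of $\Delta$: in the ``$\dim\sigma=\dim\tau-1$'' formulation (equivalent to our definition by Proposition~\ref{prop:2-collaps-defns-equivalent}) each step deletes exactly the two faces $\sigma_i$ and $\tau_i$, so $r = (|\Delta|-1)/2$; in general each elementary collapse strictly decreases the number of faces, so $r < |\Delta|$. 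Hence the certificate has size polynomial in the input. Verification is also polynomial: at step $i$ one checks that $\sigma_i$ is a free face of $\Delta_{i-1}$ (contained in a unique facet $\tau_i$), a local computation, then removes $\{\nu \in \Delta_{i-1} \mid \sigma_i \subseteq \nu\}$; at the end one checks that $\Delta_r$ is a point. Thus collapsibility lies in NP.

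\emph{NP-hardness.} The plan is a polynomial-time reduction from a known NP-complete problem, say $3$-SAT, following the gadget strategy used for collapsibility of low-dimensional complexes. Given a formula $\varphi$, one builds a simplicial complex $\Delta_\varphi$ out of ``variable gadgets'' — contractible pieces whose collapse can be initiated in exactly two ways, corresponding to assigning a variable true or false — and ``clause gadgets'' that admit a collapse precisely when one of their literals was set true; these are glued along connecting structure so that $\Delta_\varphi$ collapses to a point if and only if $\varphi$ is satisfiable. Because collapsibility coincides with contractibility only for $1$-complexes (trees), one must work in dimension $2$ or $3$, where the two notions genuinely diverge (as witnessed by triangulations of Bing's house with two rooms and of the dunce hat), which is exactly what gives the gadgets room to be topologically trivial while their collapsibility remains a global, combinatorial condition.

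The main obstacle is the correctness of the NP-hardness reduction, and within it the ``only if'' direction: one must rule out unintended sequences of elementary collapses that destroy the gadget structure and collapse $\Delta_\varphi$ even when $\varphi$ is unsatisfiable. Controlling which faces become free as the collapse progresses — and proving that no shortcut circumvents the intended encoding — is the technical heart of the argument. Membership in NP, by contrast, is routine once one observes the linear bound on the length of a collapsing sequence.
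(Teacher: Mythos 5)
The paper does not prove this lemma; it is invoked as a citation to Tancer~\cite{tancer-complexity}, exactly as you note at the outset, so there is no in-paper argument to compare against. Your sketch is nonetheless a fair high-level account of what Tancer actually does. The NP-membership half is correct and essentially complete: a collapsing sequence is the obvious certificate, each elementary collapse strictly shrinks the face set so the sequence has length bounded by the number of faces, and checking that each $\sigma_i$ is a free face of the current complex and that the end state is a point is straightforwardly polynomial. Appealing to Proposition~\ref{prop:2-collaps-defns-equivalent} to pass to the codimension-one formulation is a nice touch, though not strictly needed for the polynomial bound.

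On the NP-hardness half, your description of the reduction — from $3$-SAT, via variable gadgets that admit exactly two collapse ``openings'' encoding truth values and clause gadgets that unlock only when some literal is satisfied — matches the structure of Tancer's construction in spirit, and you correctly flag that the construction must live in dimension where collapsibility and contractibility part ways (Tancer works with $3$-dimensional complexes). You also correctly identify the genuine technical obstacle: the ``only if'' direction, i.e., ruling out rogue collapsing sequences that circumvent the intended gadget logic. This is where essentially all the work in Tancer's paper lies, and your sketch rightly does not pretend to supply it. Since the present paper treats Lemma~\ref{lem:decidable} as a black box, your choice to give only a sketch and then invoke the citation is the appropriate level of detail; the sketch contains no errors, just deliberate omissions.
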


\begin{thm}[Decidability of the locally-great decision problem] \label{thm:decidable}
The problem of deciding whether a neural code is locally great is NP-hard.
\end{thm}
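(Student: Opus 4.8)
The plan is to prove NP-hardness by a polynomial-time reduction from the collapsibility decision problem, which is NP-hard (in fact NP-complete) by Lemma~\ref{lem:decidable}. The reduction is exactly the coning construction we have already used twice (in the proof of Theorem~\ref{thm:undecidable} and in Example~\ref{ex:good-cover-does-not-imply-loc-great}): given a simplicial complex $\Delta$, form the cone $\widetilde{\Delta} = {\rm cone}_v(\Delta)$ on a new vertex $v$, and output the code $\mathcal{C} := \widetilde{\Delta} \setminus \{v\}$. This code is produced in polynomial time, and $\Delta(\mathcal{C}) = \widetilde{\Delta}$, so the only codeword of $\Delta(\mathcal{C})$ missing from $\mathcal{C}$ is $v$.

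First I would record the observation that decides the whole thing: since $v$ is the unique missing codeword, $\mathcal{C}$ is locally great if and only if the single link ${\rm Lk}_v(\Delta(\mathcal{C}))$ is collapsible, and ${\rm Lk}_v(\widetilde{\Delta}) = \Delta$ by the defining property of the cone (stated just after Definition~\ref{def:cone}). Hence $\mathcal{C}$ is locally great $\Longleftrightarrow$ $\Delta$ is collapsible. So any algorithm deciding whether $\mathcal{C}$ is locally great decides collapsibility of $\Delta$; combined with Lemma~\ref{lem:decidable}, this shows the locally-great decision problem is NP-hard.

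The one subtlety I would be careful about is the degenerate edge cases in Definition~\ref{def:locally-great}: one must make sure there are no \emph{other} missing codewords lurking, e.g.\ because of the empty set. Since $\widetilde{\Delta}$ as a cone always contains $v$ itself (as $\emptyset \cup \{v\} \in {\rm cone}_v(\Delta)$ whenever $\emptyset \in \Delta$), and removing only $\{v\}$ from $\widetilde{\Delta}$ leaves every other face intact, the set $\Delta(\mathcal{C}) \setminus \mathcal{C}$ is precisely $\{v\}$ (when $\Delta$ is nonempty; if $\Delta$ is empty the instance is trivially collapsible and we can handle it separately). So the "if and only if" really does reduce to the collapsibility of $\Delta$ and nothing else.

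I do not expect a genuine obstacle here — the work has already been done in setting up the $d$-collapse / collapsibility machinery and in the analogous undecidability argument; the main point is simply to cite Lemma~\ref{lem:decidable} in place of Lemma~\ref{lem:undecidable} and observe that the same coning reduction transfers hardness rather than undecidability. If anything, the only thing to be mildly careful about is phrasing: Lemma~\ref{lem:decidable} gives NP-\emph{completeness} of collapsibility, and since our reduction is a Karp (many-one, polynomial-time) reduction from that problem, we conclude the locally-great problem is NP-\emph{hard}; membership in NP is a separate (and not claimed) matter because checking local greatness also requires verifying that $\Lk_\sigma(\Delta(\mathcal{C}))$ is collapsible over potentially many $\sigma$, which is why the theorem asserts only hardness.
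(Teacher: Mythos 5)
Your proposal is correct and follows essentially the same approach as the paper: a polynomial-time many-one reduction from the collapsibility problem (Lemma~\ref{lem:decidable}) via the coning construction $\mathcal{C} = {\rm cone}_v(\Delta) \setminus \{v\}$, exactly mirroring the undecidability argument of Theorem~\ref{thm:undecidable}. The paper's proof also briefly notes decidability (matching the theorem's title, though not its formal statement) by observing that one can enumerate $\Delta(\mathcal{C})\setminus\mathcal{C}$ and test collapsibility of each link; you omit this, but it is not needed for the NP-hardness claim, and your attention to the edge cases and to the hardness/completeness distinction is sound.
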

\begin{proof}
By Definition~\ref{def:locally-great} and Lemma~\ref{lem:decidable}, the following steps form an algorithm that determines whether a code is locally great: enumerate $\Delta(\mathcal{C}) \setminus \mathcal{C}$ and the corresponding links, and then check whether any of these links is collapsible.

To show this is NP-hard, it suffices to reduce (in polynomial time) the problem of deciding whether a simplicial complex is collapsible (which is NP-complete by Lemma~\ref{lem:decidable}) to the locally-great decision problem.  To this end, we proceed as in the proof of Theorem~\ref{thm:undecidable}:
given any simplicial complex $\Delta$, consider the cone over $\Delta$ on a new vertex ${v}$. 
This cone is itself a simplicial complex, which we denote by $\Delta'$. 
Let $\mathcal{C}=\Delta' \setminus \{ {v} \}$. 
Then, ${v}$ is the only codeword in $\Delta(\mathcal{C})$ that is missing from $\mathcal{C}$.  So, by Definition~\ref{def:locally-great}, the original simplicial complex 
$\Delta= {\rm Lk}_{{v}}(\Delta')$ is collapsible
if and only if 
the code $\mathcal{C}$ is locally great.
\end{proof}

\section{Discussion} \label{sec:discussion}
We return to the question that opened this work, Which neural codes are convex?  
There is a growing literature tackling this question, and here we resolved some foundational problems in this developing theory.
In summary, we now know the following:
\begin{center}
$\mathcal{C}$ is convex 
$~ \xRightarrow{(a)} ~$
$\mathcal{C}$ is locally great 
$~ \xRightarrow{(b)} ~$
$\mathcal{C}$ is a good-cover code 
$ ~\Leftrightarrow~ $
$\mathcal{C}$ is locally good 
$ ~\xRightarrow{(c)} ~$
$\mathcal{C}$ is connected, 
\end{center}
and the implications {\em (a)}--{\em (c)} are not equivalences.
Also, it is undecidable to tell whether an arbitrary neural code $\mathcal{C}$ is locally good, NP-hard to tell whether $\mathcal{C}$ is locally great, and the problem remains open for determining whether $\mathcal{C}$ is convex (Question~\ref{q:cvx-decidable}).

An additional problem suggested by our work pertains to detecting our new local obstructions (those of the second kind).  Neural codes have been studied from an algebraic standpoint via neural ideals, which are closely related to the Stanley-Reisner ideal of a code's simplicial complex~\cite{neural_ring,what-makes,GB,polarization,HMO,neural-ideal-homom,morvant}. Using these algebraic tools, it is possible to find local obstructions that can be detected by homology, which suffices to determine contractibility for small simplicial complexes. It 
would be interesting to see whether collapsibility of links can be characterized in a similar way, as unlike contractibility the former is decidable.  

Next, we revisit the implication, convex $\Rightarrow$ locally good.  The converse is false in general, with the first counterexample a 4-sparse code~\cite{LSW}, but is true for $2$-sparse codes~\cite{sparse}.  We ask, therefore, {\em Is every $3$-sparse locally good code, convex?}
Indeed, for $\mathcal{C}$ a 3-sparse code, mandatory codewords have size one or two, so the corresponding links are graphs, where contractibility and collapsibility are equivalent (and thus so are the concepts of locally great and locally good).
This problem therefore seems tractable.

Finally, it would be interesting to investigate how the theory of convex codes changes when considering closed 
rather than open realizations.  
Such a theory was initiated by Cruz {\em et al.}, who proved that there are open-convex codes that are not closed-convex and vice-versa~\cite{giusti-preprint}.  
We also know that every closed-convex code is a closed-good-cover code (by definition), while the converse is false~\cite{giusti-preprint}.  
Every closed-convex code also is locally great (Lemma~\ref{lem:key} and 
Theorem~\ref{thm:new-obstruction} easily generalize to closed sets, as a version of the nerve theorem applies to closed sets in $\mathbb{R}^d$). Also, every code that is a closed-good-cover code 
is locally good (by the nerve theorem -- see ~\cite{giusti-preprint}).
A related open question posed earlier is 
whether 
every locally good code
is a closed-good-cover code (Question~\ref{q:closed}).  
If so, then every locally good code also would be a closed-good-cover code, unifying some of the theory of open-convex and closed-convex codes.

We end by revisiting the neuroscience motivation for our work.  
Recall that place cells enable the brain to represent an organism's environment
by way of neural codes arising from approximately convex receptive fields.  
The goal of analyzing which codes are convex, therefore, is 
aimed at understanding what types of codes allow the brain to represent structured environments.  
One of the contributions here is to show that while the convex-code decision problem is still open, some related decision problems are hard.
Analyzing data from many neurons, therefore, may be computationally challenging.  On the other hand, our work reveals the meaning behind the main tool used for precluding convexity, namely, local obstructions: these obstructions exactly characterize when a code can {\em not} be realized by an (open) good cover.

\subsection*{Acknowledgements} 
This work was initiated by AC at the 2016 REU 
in the Department of Mathematics
at Texas A\&M University,
which was supported by the NSF (DMS-1460766). 
The authors thank
Chad Giusti, 
Laura Matusevich, 
Kaitlyn Phillipson, Zvi Rosen, 
Ola Sobieska, 
Martin Tancer, 
and Zev Woodstock 
for insightful comments and discussion.  
The authors also thank two conscientious referees whose comments improved this work.
AS was partially supported by the NSF (DMS-1312473/DMS-1513364)
and the Simons Foundation (\#521874).

\bibliography{convexity-refs}{}
\bibliographystyle{plain}

\appendix

\section{Removing interiors of faces} 
Here we prove Lemma~\ref{lem:code-cpx}, which states that for a contractible simplicial complex $\Sigma$, removing the relative interiors of some faces 
with contractible link, yields a space that is still contractible. 

We will use the following notation.
For a face $\tau$ of a simplicial complex $\Sigma$,
the {\em open star} of~$\tau$ is:
\[
{\rm St}_{\tau}(\Sigma) ~:= ~
\bigcup_{\tau \subseteq \sigma \in \Sigma} \interior ~\subseteq~ |\Sigma|~.
\]
(This union is in fact a disjoint union.)
If $v$ is a vertex of~$\Sigma$, we denote ${\rm St}_{\{v\}}(\Sigma)$ 
also by~${\rm St}_{v}(\Sigma)$. We denote the 
barycentric subdivision of $\Sigma$ by~$\Sigma'$, that is, the faces of $\Sigma'$ are flags 
$\tau_1 \subsetneq \tau_2 \subsetneq \dots \subsetneq \tau_k$ of 
nonempty
faces in~$\Sigma$. Equivalently, $\Sigma'$ is the order complex of the face poset of~$\Sigma$.
Finally, if $\tau$ is a face of $\Sigma$, we let $v(\tau)$ denote the corresponding vertex in
the barycentric subdivision~$\Sigma'$.

\begin{lem}
\label{lem:total-order}
	Let $\Sigma$ be a simplicial complex, and let $\tau_1, \tau_2, \dots, \tau_k$ be 
   nonempty
    faces of~$\Sigma$. Then the intersection 
	$\bigcap_i {\rm St}_{v(\tau_i)}(\Sigma')$ is nonempty if and only if the faces $\tau_i$ are totally ordered by inclusion.
\end{lem}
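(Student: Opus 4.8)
The plan is to characterize when a point of $|\Sigma'|$ lies in $\bigcap_i {\rm St}_{v(\tau_i)}(\Sigma')$ by unwinding the definition of the open star in a barycentric subdivision. Recall that a point $x \in |\Sigma'|$ lies in the relative interior of a unique face of $\Sigma'$, which is a flag $\eta_1 \subsetneq \eta_2 \subsetneq \dots \subsetneq \eta_\ell$ of nonempty faces of $\Sigma$; and ${\rm St}_{v(\tau)}(\Sigma')$ consists exactly of those points whose supporting flag contains $v(\tau)$ as one of its vertices, i.e.\ $\tau \in \{\eta_1, \dots, \eta_\ell\}$. Hence $x \in \bigcap_i {\rm St}_{v(\tau_i)}(\Sigma')$ exactly when the supporting flag of $x$ contains all of $v(\tau_1), \dots, v(\tau_k)$, which forces $\{\tau_1, \dots, \tau_k\}$ to be a subset of a chain in the face poset of $\Sigma$ — equivalently, the $\tau_i$ are totally ordered by inclusion.

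First I would establish the forward direction: if the intersection is nonempty, pick $x$ in it, let its supporting flag be $\eta_1 \subsetneq \dots \subsetneq \eta_\ell$, and observe that each $v(\tau_i)$ must be a vertex of that flag (since $x \in {\rm St}_{v(\tau_i)}(\Sigma')$ and the open stars of distinct vertices of a simplex are disjoint from points not supported on faces through that vertex). Then all the $\tau_i$ lie among $\eta_1, \dots, \eta_\ell$, which is a chain, so they are totally ordered. Conversely, if $\tau_1 \subsetneq \tau_2 \subsetneq \dots \subsetneq \tau_k$ after relabeling, then this flag is itself a face $F$ of $\Sigma'$, and the barycenter of $F$ (or any point in its relative interior) lies in every ${\rm St}_{v(\tau_i)}(\Sigma')$ since $F$ contains each vertex $v(\tau_i)$; thus the intersection is nonempty.

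I expect the main obstacle to be stating cleanly the elementary fact that ${\rm St}_{v}(\Sigma')$ is precisely the set of points of $|\Sigma'|$ whose supporting (minimal) face contains the vertex $v$, and that the supporting face of a point in a geometric simplicial complex is well-defined and unique. This is standard but needs to be invoked carefully, because the open star is a union of \emph{relative interiors} of faces, not of closed faces; the key observation is that $x$ lies in $\interior[\nu]$ for exactly one face $\nu$ (its carrier), and $x \in {\rm St}_{w}(\Sigma')$ iff $w$ is a vertex of that carrier $\nu$. Once that is in hand, the rest is immediate combinatorics of chains in the face poset, so the argument is short; I would keep it to a paragraph.
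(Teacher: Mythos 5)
Your proposal is correct and takes essentially the same approach as the paper's proof: identify the unique face $\sigma'$ of $\Sigma'$ whose relative interior carries the chosen point, observe that membership in ${\rm St}_{v(\tau_i)}(\Sigma')$ forces $v(\tau_i)$ to be a vertex of $\sigma'$, and conclude that the $\tau_i$'s form a chain (with the converse via the barycenter of the flag). One small tidiness note: when you relabel in the converse direction, it is safer to write $\tau_1 \subseteq \dots \subseteq \tau_k$ rather than strict inclusions, since the statement does not assume the $\tau_i$ are distinct; this is how the paper phrases it.
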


\begin{proof}
	If $\bigcap_i {\rm St}_{v(\tau_i)}(\Sigma') \ne \emptyset$, let $x \in \bigcap_i {\rm St}_{v(\tau_i)}(\Sigma')$. There is a unique
	face $\sigma' \in \Sigma'$ with $x \in {\rm int}(\sigma')$. Then ${\rm int}(\sigma') \subseteq {\rm St}_{v(\tau_i)}(\Sigma')$ for all $i \in \{1,2,\dots,k\}$.
	Thus, $v(\tau_1),v(\tau_2), \dots, v(\tau_k)$ are vertices of~$\sigma'$, and so form a face in $\Sigma'$. This means that the $\tau_i$'s,
    after reordering, form a flag in $\Sigma$, i.e., 
   are totally ordered by inclusion.
	
	Conversely, if, after reordering, $\tau_1 \subseteq \tau_2 \subseteq \dots \subseteq \tau_k$, then this flag     corresponds to a face $\sigma'$ of~$\Sigma'$
	with vertex set $v(\tau_1),v(\tau_2), \dots, v(\tau_k)$. Now ${\rm int}(\sigma') \subseteq {\rm St}_{v(\tau_i)}(\Sigma')$ for all $i \in \{1,2,\dots,k\}$, so
	${\bigcap_i {\rm St}_{v(\tau_i)}(\Sigma') \ne \emptyset}$.
\end{proof}

\begin{lem}
\label{lem:nerve-subdivision}
	Let $\Gamma$ be a collection of faces of a simplicial complex~$\Sigma$. Let $K = \bigcup_{\sigma \in \Gamma} \interior \subseteq |\Sigma|$. 
	Then for any $\tau \in \Gamma$, the intersection $K \cap {\rm St}_{v(\tau)}(\Sigma')$ is open in~$K$ and contractible. Moreover, the collection
    \[ \mathcal U ~=~ \left\{ K \cap {\rm St}_{v(\tau)}(\Sigma') \mid \tau \in \Gamma \right\} \]
is a good cover of $K$, and the nerve 
	$\mathcal N(\mathcal U)$ is homotopy equivalent to~$K$.
\end{lem}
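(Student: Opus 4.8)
The plan is to verify the three claims — that each $K \cap {\rm St}_{v(\tau)}(\Sigma')$ is open in $K$, that it is contractible, and that $\mathcal U$ is a good cover whose nerve is homotopy equivalent to $K$ — and then invoke the nerve theorem (Proposition~\ref{prop:nerve-thm}). First I would establish openness: ${\rm St}_{v(\tau)}(\Sigma')$ is by definition the open star of the vertex $v(\tau)$ in the barycentric subdivision $\Sigma'$, and open stars of vertices are open in $|\Sigma'| = |\Sigma|$; intersecting with $K$ gives a set open in $K$. For contractibility, the key point is that the open star of a vertex $w$ in any simplicial complex deformation retracts onto $\{w\}$ by a straight-line homotopy that scales the barycentric coordinate of $w$ up to $1$; I would check that this homotopy carries $K \cap {\rm St}_{v(\tau)}(\Sigma')$ into itself. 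This works because $K = \bigcup_{\sigma \in \Gamma} {\rm int}(\sigma)$ is a union of relative interiors of faces of $\Sigma$, and the retraction of the open star of $v(\tau)$ moves a point of ${\rm int}(\sigma')$ (for $\sigma'$ a face of $\Sigma'$ containing $v(\tau)$) toward the vertex $v(\tau)$, staying inside ${\rm int}(\sigma' \cap {\rm St}_{v(\tau)}(\Sigma'))$ — and since each face of $\Sigma'$ sits inside ${\rm int}(\sigma)$ for a unique $\sigma \in \Sigma$ (namely the largest face in the flag), and $v(\tau)$ corresponds to $\tau \subseteq \sigma$, the whole retraction trajectory stays in ${\rm int}(\sigma) \subseteq K$ provided $\sigma \in \Gamma$, which holds because the point started in $K$. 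Thus each member of $\mathcal U$ is nonempty (it contains $v(\tau)$ when $\tau \in \Gamma$) and contractible.

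Next I would handle the intersections: for $\tau_1, \dots, \tau_k \in \Gamma$, the set $K \cap \bigcap_i {\rm St}_{v(\tau_i)}(\Sigma')$ is either empty or contractible. By Lemma~\ref{lem:total-order}, the intersection $\bigcap_i {\rm St}_{v(\tau_i)}(\Sigma')$ is nonempty exactly when, after reordering, $\tau_1 \subseteq \tau_2 \subseteq \dots \subseteq \tau_k$; in that case this intersection is the open star in $\Sigma'$ of the \emph{face} $\{v(\tau_1),\dots,v(\tau_k)\}$, which is again a union of relative interiors of faces of $\Sigma'$ each containing all the $v(\tau_i)$. The same straight-line-homotopy argument — now scaling up the barycentric coordinates of the $v(\tau_i)$ simultaneously and renormalizing — retracts $K \cap \bigcap_i {\rm St}_{v(\tau_i)}(\Sigma')$ onto the relative interior of the face $\{v(\tau_1),\dots,v(\tau_k)\}$, which is contractible, and the trajectory stays in $K$ by the same reasoning as before. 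Hence $\mathcal U$ is a good cover of $K$ (it is a finite collection of contractible open sets with contractible or empty intersections covering $K$, since every point of $K$ lies in ${\rm int}(\sigma)$ for some $\sigma \in \Gamma$ and hence in ${\rm St}_{v(\sigma)}(\Sigma')$).

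Finally, $K$ is a subspace of a finite simplicial complex, hence metrizable, hence paracompact, so Proposition~\ref{prop:nerve-thm} applies and gives $K \simeq \mathcal N(\mathcal U)$. The main obstacle I expect is the bookkeeping in the contractibility arguments — specifically, verifying cleanly that the coordinate-scaling deformation retraction of a (multi-vertex) open star in $\Sigma'$ restricts to a deformation retraction of its intersection with $K$, i.e. that the homotopy never leaves $K$. The conceptual reason is that $K$ is a union of \emph{full} relative interiors ${\rm int}(\sigma)$ of faces $\sigma$ of the original complex, and the barycentric-subdivision open stars involved are unions of relative interiors of faces of $\Sigma'$ each of which lies wholly inside one such ${\rm int}(\sigma)$; once a point's enclosing $\sigma$ is pinned down it never changes along the homotopy, so membership in $K$ is preserved. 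Making this precise — tracking which face $\sigma' \in \Sigma'$ and which $\sigma \in \Sigma$ a moving point lies in — is the one place requiring care, but it is routine given Lemma~\ref{lem:total-order}.
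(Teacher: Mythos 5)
Your proposal is correct and follows the same approach as the paper's proof: openness from open stars being open in $|\Sigma'| = |\Sigma|$, contractibility via the straight-line (barycentric-coordinate-scaling) retraction of ${\rm St}_{v(\tau)}(\Sigma')$ to $v(\tau)$ combined with the observation that such a trajectory never leaves the relative interior ${\rm int}(\sigma)$ of the enclosing $\Sigma$-face and hence stays in $K$, identification of the pairwise (and higher) intersections as open stars of faces of $\Sigma'$ via Lemma~\ref{lem:total-order}, and a final appeal to the nerve theorem. The only cosmetic difference is that the paper retracts the intersection $K \cap {\rm St}_{\tau'}(\Sigma')$ to the single barycenter of $\tau'$, whereas you retract onto the relative interior of $\tau'$; both are equivalent since that relative interior lies in $K$ and is contractible.
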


\begin{proof}
	The open star ${\rm St}_{v(\tau)}(\Sigma')$ is open in~$|\Sigma|$ and thus $K \cap {\rm St}_{v(\tau)}(\Sigma')$ is open in~$K$.
	There is a deformation retraction of ${\rm St}_{v(\tau)}(\Sigma')$ to~$v(\tau)$ along straight lines. 
    Indeed, if $\sigma$ is a face of $\Sigma$ with
	$\tau \subseteq \sigma$, then for any $x \in \interior$ the segment from $v(\tau)$ to~$x$, excluding~$v(\tau)$, is entirely within~$\interior$.
	Thus the deformation retraction induces a deformation retraction of $K \cap {\rm St}_{v(\tau)}(\Sigma')$ to~$v(\tau)$.
    So, $K \cap {\rm St}_{v(\tau)}(\Sigma')$ is contractible.

To show that $\mathcal U$ is a cover of $K$, 
let $x \in K$.  Then $x \in {\rm int}(\sigma)$ for some $\sigma \in \Gamma$, and so $x \in {\rm int}(\tau')\subseteq {\rm St}_{v(\sigma)}(\Sigma')$ for some face $\tau'$ of $\Sigma'$ for which $v(\sigma)$ is a vertex.
Hence, $x$ is in the set $K \cap {\rm St}_{v(\sigma)}(\Sigma')$ from $\mathcal U$.

	If $\tau_1, \tau_2, \dots, \tau_k$ are faces in $\Gamma$ such that $\bigcap_i (K \cap {\rm St}_{v(\tau_i)}(\Sigma')) \ne \emptyset$, 
	then the faces $\tau_i$ are totally ordered by inclusion by Lemma~\ref{lem:total-order} and thus
	correspond to a face $\tau'$ of~$\Sigma'$. 
    It is straightforward to check that the intersection $\bigcap_i (K \cap {\rm St}_{v(\tau_i)}(\Sigma'))$ is equal to
	$K \cap {\rm St}_{\tau'}(\Sigma')$, which is contractible, since there is a straight-line deformation retraction to the barycenter of~$\tau'$,
	which does not leave~$K$. 
    Thus, $\mathcal U$ is a good cover, 
    and so the nerve theorem (Proposition~\ref{prop:nerve-thm})
implies that
	the nerve $\mathcal N(\mathcal U)$ is homotopy equivalent to~$K$. 
\end{proof}

\begin{prop} [Lemma~\ref{lem:code-cpx}, restated] \label{prop:lemma-restated}
Let $\Sigma$ be a contractible simplicial complex, 
and let $\sigma_1$, $\sigma_2$, \dots, $\sigma_k$ be faces of 
$\Sigma$ such that,
for  $i \in \{1,2,\dots,k\}$, the link ${\rm Lk}_{\sigma_i}(\Sigma)$ is contractible.
Then
$$|\Sigma| \setminus \left( 
{\rm int}(\sigma_1) \cup
{\rm int}(\sigma_2) \cup
\dots \cup
{\rm int}(\sigma_k)
\right)$$
is contractible.
\end{prop}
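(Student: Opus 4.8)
The plan is to reduce to the nerve-theoretic setup of Lemma~\ref{lem:nerve-subdivision} and then argue by induction on $k$, each time removing one face-interior at a time while maintaining contractibility. Pass to the barycentric subdivision $\Sigma'$, and let $\Gamma$ be the collection of \emph{all} nonempty faces of $\Sigma$ \emph{except} $\sigma_1, \dots, \sigma_k$; then $K := \bigcup_{\sigma \in \Gamma} \mathrm{int}(\sigma) = |\Sigma| \setminus (\mathrm{int}(\sigma_1) \cup \dots \cup \mathrm{int}(\sigma_k))$ is exactly the space whose contractibility we want. By Lemma~\ref{lem:nerve-subdivision}, $\mathcal U = \{ K \cap {\rm St}_{v(\tau)}(\Sigma') \mid \tau \in \Gamma \}$ is a good cover of $K$, so $K \simeq \mathcal N(\mathcal U)$. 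The key combinatorial observation, via Lemma~\ref{lem:total-order}, is that $\mathcal N(\mathcal U)$ is the full subcomplex of the barycentric subdivision $\Sigma'$ spanned by the vertices $\{ v(\tau) \mid \tau \in \Gamma \}$ — that is, $\Sigma'$ with the vertices $v(\sigma_1), \dots, v(\sigma_k)$ (and all faces containing them) deleted. So it suffices to show: deleting from $\Sigma'$ the vertices $v(\sigma_1),\dots,v(\sigma_k)$ — vertices whose links in $\Sigma'$ are contractible — preserves contractibility.

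For this I would induct on $k$. The base case $k=0$ is the hypothesis that $\Sigma$, hence $\Sigma'$, is contractible. For the inductive step, it is enough to handle $k=1$: delete a single vertex $w = v(\sigma_i)$ from a contractible complex $\Gamma_0$ (a full subcomplex of $\Sigma'$) and show the result $\Gamma_0 \setminus w$ is contractible, \emph{provided} the link of $w$ in $\Gamma_0$ is contractible. (One must first check that the link of $v(\sigma_i)$ in the relevant full subcomplex of $\Sigma'$ is still contractible — this should follow because the link of $v(\sigma)$ in $\Sigma'$ deformation retracts onto, or is a subdivision of, the join-type complex built from the boundary subdivision of $\sigma$ and the subdivided link ${\rm Lk}_{\sigma}(\Sigma)'$, and contractibility of ${\rm Lk}_{\sigma_i}(\Sigma)$ is precisely the hypothesis; deleting the other $v(\sigma_j)$'s from this link again reduces to the same statement with smaller $k$, so the induction can be set up carefully to carry this along.) Then the contractibility of $\Gamma_0 \setminus w$ follows from a standard Mayer--Vietoris / homotopy pushout argument: writing $|\Gamma_0| = |\Gamma_0 \setminus w| \cup \overline{{\rm St}_w}$, the closed star $\overline{{\rm St}_w}$ is a cone (hence contractible) and the intersection $|\Gamma_0 \setminus w| \cap \overline{{\rm St}_w}$ is the link of $w$, which is contractible by assumption; since $|\Gamma_0|$ is contractible, gluing back a contractible piece along a contractible subspace forces $|\Gamma_0 \setminus w|$ to be contractible as well (van Kampen for $\pi_1$, Mayer--Vietoris for homology, then Whitehead).

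The main obstacle I anticipate is the bookkeeping around links in the barycentric subdivision: one needs that ${\rm Lk}_{v(\sigma_i)}(\Sigma')$ is contractible, and more delicately, that after deleting $v(\sigma_1),\dots,v(\sigma_{i-1})$ the link of $v(\sigma_i)$ in the resulting complex is \emph{still} contractible — i.e., that the deletions interact well. The cleanest route is probably to prove the slightly stronger statement that ${\rm Lk}_{v(\sigma)}(\Sigma')$, with any subset of the vertices $\{v(\tau)\}$ removed, stays contractible whenever ${\rm Lk}_{\sigma}(\Sigma)$ is contractible, by recognizing this link as itself (up to homotopy, via the order-complex description) an instance of the very proposition being proved, with $\Sigma$ replaced by a complex built from ${\rm Lk}_{\sigma}(\Sigma)$. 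This makes the argument genuinely recursive on $\dim \Sigma$ (or on the size of the face poset), so the induction should be organized on that quantity rather than only on $k$. Once the link statement is in hand, the Mayer--Vietoris step is routine.
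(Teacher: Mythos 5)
Your overall structure matches the paper's quite closely: pass to the barycentric subdivision, apply Lemma~\ref{lem:nerve-subdivision} to identify the deleted space $K$ (up to homotopy) with the full subcomplex of $\Sigma'$ on the vertices $\{v(\tau)\mid \tau\in\Gamma\}$, and then remove (or re-insert) one barycenter at a time, invoking a gluing argument --- your Mayer--Vietoris/van Kampen/Whitehead step is the same in substance as the paper's appeal to the gluing lemma from Bj\"orner. So the skeleton is right.

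The genuine gap is exactly the one you flag as the ``main obstacle'': you never actually resolve the problem that, after removing $v(\sigma_1),\dots,v(\sigma_{i-1})$, the link of $v(\sigma_i)$ in the remaining complex might no longer be contractible; you propose a vague double recursion and leave it at that. The paper closes this gap with a single, cheap idea that you are missing: \emph{process the faces in an order respecting inclusion}, always picking an inclusion-maximal face $\sigma_1$ among those not yet reinserted. With that choice, every face strictly containing $\sigma_1$ is already present, so the link of $w=v(\sigma_1)$ in the current nerve is precisely a join $\Delta(P)\ast\bigl(\Lk_{\sigma_1}(\Sigma)\bigr)'$, where $P$ is the poset of faces of $\Gamma$ properly contained in $\sigma_1$. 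The ``upper'' join factor is the \emph{full} barycentric subdivision $\bigl(\Lk_{\sigma_1}(\Sigma)\bigr)'$, which is contractible by hypothesis; and a join with a nonempty contractible complex is contractible no matter what the other factor is, so $\Delta(P)$ may be missing vertices without causing any trouble. This is why the paper does not need, and you should not need, any auxiliary recursion on dimension or any claim about the lower factor: inclusion-maximality guarantees the troublesome ``upper'' part of the link is untouched. Without this ordering your proof is incomplete --- if you delete a non-maximal $\sigma_i$ first, the link of $v(\sigma_i)$ at that stage is a join whose second factor is a \emph{subcomplex} of $\bigl(\Lk_{\sigma_i}(\Sigma)\bigr)'$, and contractibility of that subcomplex is not given; you would indeed need a nested induction (with its own ordering) to justify it, which you neither set up nor prove. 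Adding the inclusion-maximality choice and the explicit join computation of the link would make your argument correct and essentially identical to the paper's.
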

\begin{proof} 	Let $\Gamma$ be the collection of all faces of~$\Sigma$ with the exception of $\sigma_1, \sigma_2, \dots, \sigma_k$. 
	Let $K = \bigcup_{\sigma \in \Gamma} \interior = |\Sigma| \setminus \left({\rm int}(\sigma_1) \cup {\rm int}(\sigma_2) 
	\cup \dots \cup {\rm int}(\sigma_k)\right)$, and 
    let $ \mathcal U = \left\{ K \cap {\rm St}_{v(\tau)}(\Sigma') \mid \tau \in \Gamma \right\} $. 
    	    By Lemma~\ref{lem:nerve-subdivision}, the collection $\mathcal U$
	is a good cover of $K$, and 
    the nerve~$\mathcal N(\mathcal U)$ is homotopy equivalent to 
    $K$.
        
    Pick an inclusion-maximal face among 
	$\sigma_1, \sigma_2, \dots, \sigma_k$, say, $\sigma_1$ is not contained in any other~$\sigma_i$. Let $\widehat \Gamma 
	= \Gamma \cup \{\sigma_1\}$, and let $\widehat K = \bigcup_{\sigma \in \widehat\Gamma} \interior$. 
    Let $\widehat{\mathcal U} = \left\{ K \cap {\rm St}_{v(\tau)}(\Sigma') \mid \tau \in \widehat \Gamma \right\}$.
    		    Then, again by Lemma~\ref{lem:nerve-subdivision}, 
        $\widehat{\mathcal U}$
	is a good cover of $\widehat K$, and 
    the nerve $\mathcal N(\widehat{\mathcal U})$
	is homotopy equivalent to~$\widehat K$.
	
        To recap,
    $\mathcal N(\mathcal U) \simeq K$ and 
    $\mathcal N(\widehat{\mathcal U}) \simeq \widehat K$.
    Hence, the following claim would imply that $K \simeq \widehat K$:
   \\
   {\bf Claim:} $\mathcal N(\mathcal U) \simeq \mathcal N(\widehat{\mathcal U})$.\\
   Moreover, repeating this argument an additional $k-1$ times (applying it next to $\widehat K$, and so on) would imply that $K \simeq |\Sigma|$, and thus, as desired, $K$ 
	is contractible.  
    Hence, we need only prove the Claim.

	We first view the two nerves $\mathcal N(\mathcal U) $ and $ \mathcal N(\widehat{\mathcal U})$ as (abstract) simplicial complexes.    
Let ${\tau_1, \tau_2, \dots, \tau_\ell \in \Gamma}$. 
Then $\bigcap_i \left( K \cap {\rm St}_{v(\tau_i)}(\Sigma') \right) \ne \emptyset$
	if and only if $\bigcap_i \left( \widehat K \cap {\rm St}_{v(\tau_i)}(\Sigma') \right) \ne \emptyset$. 
    This is because the open vertex stars
	${\rm St}_{v(\tau_i)}(\Sigma')$,
    if they intersect, 
    cannot only intersect in a subset of ${\rm int}(\sigma_1)$, since the $\tau_i$'s are totally 
	ordered by inclusion by Lemma~\ref{lem:total-order},
    so the corresponding face $\tau'$ of $\Sigma'$ would have ${\rm int}(\tau')$ contained in the intersection
    $ \bigcap_i {\rm St}_{v(\tau_i)}(\Sigma')$, which would imply that one of the $\tau_i$'s is equal to $\sigma_1$, 
    and this would contradict the fact that the $\tau_i$'s are in $\Gamma$. 
    Thus the simplicial complexes 
	$\mathcal N(\mathcal U)$ and $\mathcal N(\widehat{\mathcal U})$ differ only in the vertex $w$ corresponding to the set 
	$\widehat K \cap {\rm St}_{v(\sigma_1)}(\Sigma')$ and 
    the faces incident to $w$.
    	
Let $\Delta_1$ denote the join ${\rm Lk}_w(\mathcal N(\widehat{\mathcal U})) * \{w \}$, and let $\Delta_2 = \mathcal N(\mathcal U) $.  
Then $\Delta_1$ is a cone over the vertex~$w$ and thus is contractible; also, 
$\Delta_1 \cap \Delta_2 = {\rm Lk}_w(\mathcal N(\widehat{\mathcal U}))$.  
So, to complete the proof it suffices to show that ${\rm Lk}_w(\mathcal N(\widehat{\mathcal U}))$ is contractible,  as~\cite[Lemma~10.3]{bjorner} will imply that
$\Delta_1 \cup \Delta_2 \simeq \Delta_2 $, that is, 
$\mathcal N(\widehat{\mathcal U})
\simeq 
\mathcal N(\mathcal U) 
$.

	Let $P$ be the poset of all faces in $\Gamma$ that are properly contained in $\sigma_1$, and denote its order
	complex by~$\Delta(P)$. We will show that ${\rm Lk}_w(\mathcal N(\widehat{\mathcal U}))$ is isomorphic to the join
	$\Delta(P) * ({\rm Lk}_{\sigma_1}(\Sigma))'$. A face of ${\rm Lk}_w(\mathcal N(\widehat{\mathcal U}))$ corresponds to
	faces $\tau_1, \tau_2, \dots, \tau_k \in \Gamma$ such that 
	$$\widehat K \cap {\rm St}_{v(\sigma_1)}(\Sigma') \cap \bigcap_i \widehat K \cap {\rm St}_{v(\tau_i)}(\Sigma') 
    ~\ne~ \emptyset~.$$
	Up to reordering the $\tau_i$'s, this implies, 
    by Lemma~\ref{lem:total-order}, that $\tau_1 \subseteq \dots \subseteq \tau_\ell \subseteq \sigma_1 \subseteq \tau_{\ell+1} \subseteq \dots \subseteq \tau_k$. The flag of faces $\tau_1 \subseteq \dots \subseteq \tau_\ell$ determines a face of~$\Delta(P)$, while
	$\tau_{\ell+1} \subseteq \dots \subseteq \tau_k$ determines a face of~$({\rm Lk}_{\sigma_1}(\Sigma))'$. Conversely, a face of 
	$\Delta(P) * ({\rm Lk}_{\sigma_1}(\Sigma))'$ corresponds to a flag of faces $\tau_1 \subseteq \dots \subseteq \tau_\ell \subseteq \sigma_1$
	with $\tau_1, \dots, \tau_\ell \in \Gamma$ and a flag $\sigma_1 \subseteq \tau_{\ell+1} \subseteq \dots \subseteq \tau_k$ with 
	$\tau_{\ell+1}, \dots, \tau_k \in \Sigma$. 
    We chose $\sigma_1$ as an inclusion-maximal face of $\Sigma$ not in~$\Gamma$, so $\tau_{\ell+1}, \dots, \tau_k \in \Gamma$. Thus we have inclusions 
	$\tau_1 \subseteq \dots \subseteq \tau_\ell \subseteq \sigma_1 \subseteq \tau_{\ell+1} \subseteq \dots \subseteq \tau_k$ with $\tau_1, \dots, \tau_k \in \Gamma$,
	which precisely corresponds to a face of~${\rm Lk}_w(\mathcal N(\widehat{\mathcal U}))$ by Lemma~\ref{lem:total-order}.
    Hence, ${\rm Lk}_w(\mathcal N(\widehat{\mathcal U}))$ is isomorphic to
	$\Delta(P) * ({\rm Lk}_{\sigma_1}(\Sigma))'$.
	
	By hypothesis, $({\rm Lk}_{\sigma_1}(\Sigma))'$ is contractible, and thus the join $\Delta(P) * ({\rm Lk}_{\sigma_1}(\Sigma))'$ is contractible.
    This join, we saw, is isomorphic to ${\rm Lk}_v(\mathcal N(\widehat{\mathcal U}))$, 
    so ${\rm Lk}_v(\mathcal N(\widehat{\mathcal U}))$
    is contractible.  Hence, as explained above, 
    \cite[Lemma~10.3]{bjorner} implies that $\mathcal N(\mathcal U) \simeq 
	\mathcal N(\widehat{\mathcal U})$, proving the Claim.
    \end{proof}

\section{Two definitions of collapsible} 
Here we prove that the definition of ``collapsible'' given earlier (Definition~\ref{def:d-collapse}(2)) is equivalent to Definition~\ref{def:collapsible-alternate} below, which is also used in the literature~\cite{bjorner,cohen}.  The proof was conveyed to us by Martin Tancer.

Recall that for a simplicial complex $\Delta$, an {\em elementary collapse} induced by a 
pair $(\sigma,\tau)$, where $\sigma$ is a face
contained in a unique facet $\tau$, is the simplicial complex 
$\Delta' = \Delta \setminus \{\nu \in \Delta \mid \sigma \subseteq \nu\}$.

\begin{defn} \label{def:collapsible-alternate}
A \textbf{collapse} of $\Delta$ to $\Delta'$ is a sequence of elementary collapses
\underline{induced by pairs $(\sigma,\tau)$ with}
\underline{$\dim \sigma = \dim \tau - 1$}
starting with $\Delta$ and ending with $\Delta'$. 
A simplicial complex is \textbf{collapsible} if it collapses to a point (via some sequence).  
\end{defn}

\begin{prop} \label{prop:2-collaps-defns-equivalent}
The definitions of ``collapsible'', in Definitions~\ref{def:d-collapse}(2) and~\ref{def:collapsible-alternate}, respectively, are equivalent.
\end{prop}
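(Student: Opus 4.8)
The plan is to establish the equivalence by proving a \emph{refinement} statement: every elementary collapse of the general type in Definition~\ref{def:d-collapse}(2) (induced by a pair $(\sigma,\tau)$ with $\sigma \subsetneq \tau$) can be realized as a sequence of elementary collapses of the restricted type in Definition~\ref{def:collapsible-alternate} (induced by pairs of faces of adjacent dimension) with the same start and endpoint. Granting this, the equivalence is immediate: an elementary collapse of the restricted type is in particular one of the general type, so a collapse-to-a-point in the sense of Definition~\ref{def:collapsible-alternate} is already one in the sense of Definition~\ref{def:d-collapse}(2); conversely, given a sequence $\Delta \to \cdots \to \{v\}$ of general elementary collapses, refining each step and concatenating produces a collapse-to-a-point of the restricted type. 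So the work is entirely in the refinement statement.

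I would prove the refinement by induction on the ``gap'' $g := \dim\tau - \dim\sigma = |\tau| - |\sigma| \geq 1$ of the inducing pair. When $g = 1$ the collapse is already of the restricted type, so there is nothing to do. For $g \geq 2$, pick a vertex $v \in \tau \setminus \sigma$ and split the elementary collapse $\Delta \to \Delta'$ induced by $(\sigma,\tau)$ into two. Since $\tau$ is the unique facet of $\Delta$ containing $\sigma$, it is also the unique facet containing $\sigma \cup \{v\}$, so there is an elementary collapse $\Delta \to \Delta_1$ induced by $(\sigma \cup \{v\},\tau)$, deleting exactly the faces $\nu$ with $\sigma \cup \{v\} \subseteq \nu$; its gap is $g-1$. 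The crucial bookkeeping point is that, because every face of $\Delta$ containing $\sigma$ must lie inside $\tau$ (any facet over it is $\tau$), the faces of $\Delta_1$ containing $\sigma$ are precisely those $\nu$ with $\sigma \subseteq \nu \subseteq \tau \setminus \{v\}$; hence $\tau \setminus \{v\}$ is a facet of $\Delta_1$ and is the unique facet containing $\sigma$. So there is an elementary collapse $\Delta_1 \to \Delta'$ induced by $(\sigma, \tau \setminus \{v\})$, again of gap $g-1$, and one checks that deleting the faces $\nu$ with $\sigma \cup \{v\} \subseteq \nu$ and then those with $\sigma \subseteq \nu \subseteq \tau \setminus \{v\}$ removes exactly the faces $\nu \supseteq \sigma$, i.e.\ the composite indeed lands at $\Delta'$. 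Applying the induction hypothesis to each of the two collapses (both of gap $g-1$) and concatenating yields the required refinement.

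The only genuinely delicate step—and the one I would spell out in full—is verifying that the second elementary collapse, $\Delta_1 \to \Delta'$ induced by $(\sigma, \tau \setminus \{v\})$, is legitimate: that in the intermediate complex $\Delta_1$ the face $\sigma$ is still free and its unique facet is $\tau \setminus \{v\}$. Everything else (the base case, the identification of which faces get removed at each stage, the concatenation of collapse sequences) is routine combinatorial bookkeeping once this point is in hand.
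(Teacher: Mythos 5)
Your proposal is correct and is essentially the same proof as the paper's (Appendix~B): both argue by induction on $\dim\tau-\dim\sigma$, split the general elementary collapse induced by $(\sigma,\tau)$ into the two smaller-gap collapses induced by $(\sigma\cup\{v\},\tau)$ and then $(\sigma,\tau\setminus\{v\})$, and verify that $\tau\setminus\{v\}$ is the unique facet over $\sigma$ in the intermediate complex. Your write-up is, if anything, slightly more explicit on that last uniqueness check (identifying all faces of $\Delta_1$ containing $\sigma$), but the decomposition and induction are identical to the paper's.
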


\begin{proof}  Let $\Delta$ be a simplicial complex.  
We need only show that every elementary collapse $\Delta \to \Delta'$ as in Definition~\ref{def:d-collapse}(2), i.e.,
induced by some $(\sigma,\tau)$ with $\sigma \subsetneq \tau$ and $\tau$ a facet,
can be obtained by a sequence of elementary collapses as in Definition~\ref{def:collapsible-alternate}.
We proceed by induction on $(\dim \tau - \dim \sigma)$.  The base case, when $\dim \tau - \dim \sigma=1$,
is already an elementary collapse under either definition.

For the inductive step, assume that $\dim \tau - \dim \sigma\geq 2$ and 
that every 
elementary collapse (of any simplicial complex) induced by pairs $(\sigma', \tau')$ with 
$(\dim \tau' - \dim \sigma') < (\dim \tau - \dim \sigma)$, can be obtained by a sequence of 
elementary collapses as in Definition~\ref{def:collapsible-alternate}.

Let $z \in \tau \setminus \sigma$.
Then $\sigma \cup \{z\} \subsetneq \tau$, and also
$\tau$ is the unique facet containing $\sigma \cup \{z\}$.  
Therefore, by the inductive hypothesis, the collapse $\Delta \to \widetilde \Delta$ induced by $(\sigma \cup \{z\},\tau)$
can be obtained by a sequence of elementary collapses as in Definition~\ref{def:collapsible-alternate}. Thus, we need only show that 
we can apply such elementary collapses
to 
$\widetilde \Delta$ to obtain  
$\Delta'$ (the outcome of the $(\sigma,\tau)$-collapse).

Note that $\sigma$ and $\tau \setminus \{z\}$ are both faces of 
$\widetilde \Delta$ (only faces of $\Delta$ containing $\sigma \cup \{z\}$ were removed).  
We claim that the unique facet of $\widetilde \Delta$ that contains $\sigma$ is 
$\tau \setminus \{z\}$.
Indeed, first, $\tau \setminus \{z\}$
contains $\sigma$ (by our choice of $\sigma$, $\tau$, and $z$), and, second, $\tau \setminus \{z\}$ is maximal in $\widetilde \Delta$ (otherwise $\tau$ would {\em not} have been maximal in $\Delta$).

Thus, by the inductive hypothesis,
the collapse $\widetilde \Delta \to \hat \Delta$
induced by $(\sigma,\tau\setminus \{z\})$
can be obtained by a sequence of elemetary collapses as in Definition~\ref{def:collapsible-alternate}.
So, to complete the proof, we need only show that $\hat \Delta = \Delta'$.  Indeed, 
$\widetilde \Delta$ was obtained by removing faces of $\Delta$ that contain $\sigma \cup \{z\}$, and then 
$\hat \Delta$ was obtained by removing faces that contain $\sigma$;
hence, 
$\hat \Delta = \Delta \setminus \{ \nu \in \Delta \mid \sigma \subseteq \nu \}
 =
  \Delta' $.
\end{proof}

\end{document}